\documentclass[a4paper,reqno]{amsart} %
\usepackage{hyperref} 
\textwidth15.5truecm
\textheight24truecm
\hoffset-1.4cm
\voffset-1.5cm

\renewcommand{\a}{\alpha}
\renewcommand{\b}{\beta}
\newcommand{\V}{\mathcal{V}}          
\renewcommand{\H}{\mathbf H}          
\newcommand{\N}{\mathbb N}            

\newcommand{\SSS}{\mathbf{s}}

\DeclareMathOperator{\ad}{ad}
\DeclareMathOperator{\ch}{char}

\DeclareMathOperator{\St}{St}              
\DeclareMathOperator{\gr}{gr}

\newtheorem{Th}{Theorem}[section]
\newtheorem{Lemma}[Th]{Lemma}

\begin{document}
\title{Identities in group rings, enveloping algebras and Poisson algebras}
\author{Victor Petrogradsky}
\address{Department of Mathematics, University of Brasilia, 70910-900 Brasilia DF, Brazil}
\email{petrogradsky@rambler.ru}
\thanks{The author was partially supported by grant
CNPq~309542/2016-2 
\\
for proceedings of Workshop INDAM
{\it Polynomial Identities in Algebras}, 2019 Roma, Italy}

\subjclass[2000]{17B63, 17B50, 17B01, 17B30, 17B65, 16R10}
\dedicatory{Dedicated to 70th anniversary of Antonio Giambruno}
\keywords{Poisson algebras, identical relations, solvable Lie algebras, nilpotent Lie algebras, symmetric algebras,
truncated symmetric algebras, restricted Lie algebras}

\begin{abstract}
This is a short survey of works on identical relations in group rings,
enveloping algebras, Poisson symmetric algebras and other related algebraic structures.
First, the classical work of Passman specified group rings that satisfy nontrivial identical relations.
This result was an origin and motivation of close research projects.
Second, Latyshev and Bahturin determined Lie algebras such
that their universal enveloping algebra satisfies a non-trivial identical relation.
Next, Passman and Petrogradsky solved a similar problem in case  of restricted enveloping algebras.
Third, Farkas started to study identical relations in Poisson algebras.
On the other hand,
Shestakov proved that the symmetric algebra $S(L)$ of an arbitrary Lie algebra $L$ satisfies the identity
$\{x,\{y,z\}\}\equiv 0$ if, and only if, $L$ is abelian.
We survey further results on existence of identical relations in
(truncated) Poisson symmetric algebras of Lie algebras.
In particular, we report on recent results on (strong) Lie nilpotency
and (strong) solvability of (truncated) Poisson symmetric algebras and related nilpotency classes.
Also, we discuss constructions and methods to achieve these results.
\end{abstract}

\maketitle
\section{Introduction}
Now, there is an established theory of identical relations
in associative and Lie algebras~\cite{Drensky,Ba}.
It has many applications to group theory such as the solution of the Restricted Burnside Problem.
Also, identical relations were applied to study other algebraic structures.

In the present review,
we discuss existence of identical relations in three classes of algebras.
First,  the starting point is the result of Passman on existence
of identical relations in group rings~\cite{Pas72} (Theorem~\ref{TKG}).
This result caused an intensive research on different types of identical relations in group rings,
such as Lie nilpotence, solvability, non-matrix identical relations,
classes of Lie nilpotence, solvability lengths, etc.
There are at least 50 papers published in this area.

Second, Latyshev~\cite{Lat63} and Bahturin~\cite{Ba74}
started to study identical relations in universal enveloping algebras of Lie algebras.
Passman~\cite{Pas90} and Petrogradsky~\cite{Pe91} specified
existence of identical relations in restricted enveloping algebras (Theorem~\ref{TuL}).
There are many papers in this area studying different types of identical relations,
such as Lie nilpotence, solvability, non-matrix identical relations, classes of Lie nilpotence, solvability lengths, etc.
In particular, Riley and Shalev determined necessary and sufficient conditions for restricted Lie algebras under which
the restricted enveloping algebra is Lie nilpotent or solvable~\cite{RiSh93}.
The research was further extended to new objects, such as Lie superalgebras, color Lie superalgebras, smash products.
These problems were studied in numerous
papers by Bahturin, Bergen, Kochetov, Lichtman, Passman, Petrogradsky, Riley, Shalev, Siciliano, Spinelli, Usefi, et.al.

Poisson algebras appeared in works of Berezin~\cite{Ber67} and Vergne~\cite{Ver69}.
Free Poisson (super)algebras were introduced by Shestakov~\cite{Shestakov93}.
A basic theory of identical relations for Poisson algebras  was developed by Farkas~\cite{Farkas98,Farkas99}.
Identical relations of symmetric Poisson algebras
of Lie (super)algebras were studied by Kostant~\cite{Kos81}, Shestakov~\cite{Shestakov93}, and Farkas~\cite{Farkas99}.
The third starting point for our research is the result of Giambruno and Petrogradsky~\cite{GiPe06}
on existence of non-trivial multilinear Poisson identical relations in truncated symmetric Poisson algebras
of restricted Lie algebras (Theorem~\ref{Tmain}).
Finally, we review recent results on Lie identities of truncated symmetric Poisson algebras~\cite{PeIlana17}.
\medskip

By $K$ denote the ground field, as a rule of positive characteristic $p$.
By $\langle S\rangle$ or $\langle S\rangle_K$ denote the linear span of a subset $S$ in a $K$-vector space.
Let $L$ be a Lie algebra.
The Lie brackets are left-normed: $[x_1,\ldots,x_n]=[[x_1,\dots,x_{n-1}],x_n]$, $n\ge 1$.
Adjoint maps $\ad x:L\to L$ are defined by $\ad x(y)=[x,y]$, $x,y\in L$.
One defines the {\it lower central series}: $\gamma_1(L)=L$, $\gamma_{n+1}(L)=[\gamma_{n}(L), L]$, $n\ge 1$.
Also, $L^2=[L,L]=\gamma_2(L)$ is the {\it commutator subalgebra}.
By $U(L)$ denote the universal enveloping algebra and $S(L)=\mathop{\oplus}\limits_{n=0}^\infty U_n/U_{n-1}$
the related {\it symmetric algebra}~\cite{Ba,BMPZ,Dixmier}.
For the basic theory of restricted Lie algebras and restricted enveloping algebras see~\cite{Ba,JacLie}.
Let us note that all our Lie algebras over a field of positive characteristic need not be restricted.

\section{Identical relations of group rings}
\label{Sidentities}

In this section we review results on existence of nontrivial polynomial identities in group rings.
This is the origin of this research direction.

Passman obtained necessary and sufficient conditions for a group ring $K[G]$ to
satisfy a nontrivial polynomial identity over a field $K$ of arbitrary characteristic $p$.
A group $G$ is said to be {\it p-abelian} if $G$ is abelian in case $p=0$ and, in case $p>0$, $G'$,
the commutator subgroup of $G$, is a finite $p$-group.

\begin{Th}[\cite{Pas72}]
\label{TKG}
The group  algebra $K[G]$ of a group $G$ over a field $K$ of characteristic $p\ge 0$ satisfies
a nontrivial polynomial identity if and only if the following conditions are satisfied.
\begin{enumerate}
  \item
    There exists a subgroup  $A\subseteq G$ of finite index;
  \item
    $A$ is $p$-abelian.
\end{enumerate}
\end{Th}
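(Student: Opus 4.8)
The plan is to prove the two implications separately; the \emph{if} direction is elementary, while the \emph{only if} direction carries all the weight. For sufficiency, suppose $A\subseteq G$ is $p$-abelian of finite index $n=[G:A]$. A transversal of $A$ in $G$ is a basis of $K[G]$ as a free right $K[A]$-module of rank $n$, and letting $K[G]$ act on itself by left multiplication gives an embedding of $K$-algebras
\[
K[G]\hookrightarrow \mathrm{End}_{K[A]}\big(K[G]\big)\cong M_n(K[A]).
\]
Since matrices over a PI-algebra again form a PI-algebra, it suffices to check that $K[A]$ is PI. If $p=0$ then $A$ is abelian, so $K[A]$ is commutative and satisfies $[x_1,x_2]\equiv 0$. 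If $p>0$ then $A'$ is a finite $p$-group $P$; in characteristic $p$ the augmentation ideal of $K[P]$ is nilpotent, and because $P$ is normal in $A$ the two-sided ideal $I$ it generates in $K[A]$ is nilpotent, say $I^N=0$, with $K[A]/I\cong K[A/P]$ commutative. Then $[K[A],K[A]]\subseteq I$, so $K[A]$ satisfies $[x_1,y_1]\cdots[x_N,y_N]\equiv 0$. This settles the \emph{if} direction.

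For necessity, assume $K[G]$ satisfies a nontrivial identity; multilinearizing, I may take it multilinear of some degree $d$. The object that governs the answer is the FC-centre
\[
\Delta(G)=\{g\in G:\ [G:C_G(g)]<\infty\},
\]
and the required $p$-abelian subgroup will be found inside it. The core of the argument --- and the step I expect to be by far the hardest --- is Passman's purely combinatorial ``$\Delta$-method'', which distills from a single multilinear identity of degree $d$ the two finiteness statements $[G:\Delta(G)]<\infty$ and $|\Delta(G)'|<\infty$. The idea is to evaluate the identity on judiciously chosen group elements and to track how conjugation permutes the resulting group-basis monomials: the presence of too many elements in distinct $\Delta(G)$-cosets, or of too large a derived subgroup, would leave an uncancelled nonzero coefficient when the identity is rewritten in the basis $G$, a contradiction. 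Making this quantitative --- bounding the relevant indices and orders in terms of $d$ alone --- is the technical heart of the proof.

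It then remains to upgrade these finiteness facts to the sharp structural conclusion, and here the characteristic enters in an essential way; finite $\Delta(G)'$ is \emph{not} by itself enough, as the infinite Heisenberg group over a finite field (with finite centre equal to its derived subgroup, yet no abelian subgroup of finite index) shows. I would therefore pass to a suitable finite-index subgroup $A\subseteq\Delta(G)$ and analyse its finite derived subgroup through the group algebra. If $p=0$, the relevant augmentation ideals are semisimple rather than nilpotent, so any persistent non-commutativity supported on infinitely many independent conjugate pairs would force matrix blocks of unbounded size inside $K[G]$, contradicting the bounded degree $d$; this rules out the Heisenberg-type obstruction and yields an abelian $A$ of finite index. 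If $p>0$, the same dichotomy between nilpotent behaviour at the prime $p$ and semisimple behaviour away from it forces the prime-to-$p$ part of the derived subgroup to disappear upon passing to $A$, leaving $A'$ a finite $p$-group, i.e.\ $A$ $p$-abelian of finite index. In either characteristic this produces the subgroup demanded by the theorem, and I reiterate that extracting the initial finiteness of $[G:\Delta(G)]$ and $|\Delta(G)'|$ from the bare degree bound is the decisive obstacle.
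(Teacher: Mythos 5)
First, a point of reference: this survey states Theorem~\ref{TKG} without proof, citing \cite{Pas72}, so your attempt can only be measured against Passman's original argument --- which is, in strategy, exactly the argument your outline shadows. Your \emph{if} direction is complete and correct: the embedding $K[G]\hookrightarrow \mathrm{End}_{K[A]}\big(K[G]\big)\cong M_n(K[A])$, the nilpotence of the two-sided ideal $I=\omega(K[P])K[A]$ generated by the augmentation ideal of the finite $p$-group $P=A'$ (where $I^N=\omega(K[P])^N K[A]=0$ uses the normality of $P$ in $A$ exactly as you use it), and the conclusion that $K[A]$ satisfies $[x_1,y_1]\cdots[x_N,y_N]\equiv 0$. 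The one soft spot is the appeal to ``matrices over a PI-algebra are again PI'', which in that generality is Regev's tensor-product theorem; it is true, but the heavy tool is avoidable and Passman avoids it: $M_n(K[A])/M_n(I)\cong M_n(K[A/P])$ satisfies the standard identity $s_{2n}$ by Amitsur--Levitzki, and $M_n(I)$ is nilpotent of index $N$, so $K[G]$ satisfies the explicit nontrivial identity $s_{2n}(x_1^{(1)},\ldots,x_{2n}^{(1)})\cdots s_{2n}(x_1^{(N)},\ldots,x_{2n}^{(N)})\equiv 0$.

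The genuine gap is that your \emph{only if} direction is a table of contents rather than a proof: the load-bearing steps are named, accurately, but not carried out. Concretely: (i) the $\Delta$-method itself --- converting a multilinear identity of degree $d$ into $[G:\Delta_m(G)]\le f(d)$ for suitable $m=m(d)$ --- is a chain of coset- and support-counting lemmas, and nothing in your text goes beyond describing what it should accomplish; (ii) the finiteness of $\Delta(G)'$ is not pure $\Delta$-method combinatorics but follows from the bounded class sizes via B.~H.~Neumann's theorem that a group with boundedly finite conjugacy classes has finite derived subgroup (the group-theoretic counterpart of the bilinear-map result recorded as Theorem~\ref{TNeu} in this survey), an ingredient your sketch does not mention; (iii) the final, characteristic-sensitive upgrade --- where, to your credit, your infinite Heisenberg/extraspecial example correctly shows that finite $[G:\Delta(G)]$ and finite $|\Delta(G)'|$ cannot suffice in characteristic $0$ --- requires actually producing the ``unbounded matrix blocks'': one passes to a finite-index subgroup $H$ with $H'\le Z(H)$ finite, analyses the commutator pairing $H/Z(H)\times H/Z(H)\to H'$, and uses Kaplansky's theorem on primitive PI-rings (bounded irreducible representation degrees) together with Maschke semisimplicity of finite subquotients of order prime to $p$ to bound the rank of that pairing away from the prime $p$. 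Since you yourself flag (i) and (iii) as ``the technical heart'' and ``the decisive obstacle'' and then stop, the necessity half remains unproved: what you have is a fully worked sufficiency direction attached to a well-informed roadmap of Passman's necessity argument.
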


All our associative algebras are with unity.
Recall the notions of a (strong) Lie nilpotence and (strong) solvability for associative algebras.
Let $A$ be an associative algebra, and $A^{(-)}$ the related Lie algebra.
Consider its {\it lower central series}:
$\gamma_1(A)=A$, $\gamma_{i+1}(A)=[\gamma_i(A),A]$, $i\ge 1$.
The algebra $A$ is said to be {\it Lie nilpotent} of class $s$ if and only if  $\gamma_{s+1}(A)=0$ and $\gamma_s(A)\ne 0$.
Also consider {\it upper Lie powers} defined by $A^{(0)}=A$ and $A^{(n+1)}=[A^{(n)},A]A$, $n\ge 0$
(we use the shifted enumeration in comparison with~\cite{PaPaSe73,RiSh95} because one checks that
$\{A^{(n)}\,|\, n\ge 0\}$ is a filtration).
Now, $A$ is {\it strongly Lie nilpotent} of class $s$ if and only if $A^{(s)}=0$ and $A^{(s-1)}\ne 0$.
One defines the {\it derived series} of $A$ by setting $\delta_{0}(A)=A$, $\delta_{i+1}(A)=[\delta_{i}(A),\delta_{i}(A)]$, $i\ge 0$.
The algebra $A$ is {\it solvable} of length $s$ if and only if $\delta_{s}(A)=0$ and $\delta_{s-1}(A)\ne 0$.
Consider also the {\it upper derived series}: $\tilde\delta_{0}(A)=A$,
$\tilde\delta_{i+1}(A)=[\tilde\delta_{i}(A),\tilde\delta_{i}(A)]A$, $i\ge 0$.
Now, $A$ is {\it strongly solvable} of length $s$ if and only if
$\tilde\delta_{s}(A)=0$ and $\tilde\delta_{s-1}(A)\ne 0$.

Passi, Passman and Sehgal characterized the Lie nilpotence and solvability of $K[G]$~\cite{PaPaSe73}.
\begin{Th}[\cite{PaPaSe73}]\label{TKGNS}
Let $K[G]$ be the group ring of a group $G$ over a field $K$, $\ch K=p\geq 0$. Then
\begin{enumerate}
  \item
    $K[G]$ is Lie nilpotent if and only if $G$ is $p$-abelian and nilpotent;
  \item
    $K[G]$ is solvable if and only if $G$ is $p$-abelian, for $p\ne 2$;
  \item
    $K[G]$ is solvable if and only if $G$ has a $2$-abelian subgroup of index at most $2$, for $p=2$.
\end{enumerate}
\end{Th}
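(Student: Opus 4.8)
The plan is to treat both properties through a single structural device that converts Lie commutators in $K[G]$ into the augmentation ideal of the commutator subgroup. For $g,h\in G$ one has the identity $gh-hg=gh\,(1-(h,g))$, where $(h,g)=h^{-1}g^{-1}hg\in G'$, so that $1-(h,g)$ lies in the augmentation ideal $\omega(K[G'])$. Setting $I=K[G]\,\omega(K[G'])$, which is a two-sided ideal because $G'$ is normal in $G$, this gives $[K[G],K[G]]\subseteq I$ while $K[G]/I\cong K[G/G']$ is commutative. When $G$ is $p$-abelian with $p>0$, the subgroup $G'$ is a finite $p$-group, so $K[G']$ is local with nilpotent radical $\omega(K[G'])$, say $\omega(K[G'])^N=0$; since $G$ normalises $\omega(K[G'])$ it follows that $I^N=0$. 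Thus in the $p$-abelian case $K[G]$ is commutative modulo the nilpotent ideal $I$, and this is the engine for every sufficiency argument. First I would record the free reduction: Lie nilpotence and solvability are themselves polynomial identities, so Theorem~\ref{TKG} already supplies a $p$-abelian subgroup of finite index in each case, and it suffices to verify the identities on finitely generated subgroups.

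For the sufficiency of solvability I would argue purely ring-theoretically. Using $[I^j,I^k]\subseteq I^{j+k}$, which holds in any associative algebra since both $I^jI^k$ and $I^kI^j$ lie in $I^{j+k}$, together with $\delta_1(K[G])\subseteq I$, an immediate induction yields $\delta_n(K[G])\subseteq I^{2^{n-1}}$; as $I^N=0$ the derived series terminates. This proves that a $p$-abelian group has solvable group ring for \emph{every} $p$, settling the "if" part of statement~(2) and the index-one case of statement~(3). The sufficiency in statement~(1) is genuinely stronger: solvability comes for free, but Lie nilpotence needs the group-theoretic input. Here the crude filtration by powers of $I$ must be refined to the filtration attached to the lower central series $\gamma_n(G)$, obtained by iterating the basic identity so that a left-normed Lie commutator of weight $n$ produces factors $1-(x,\dots)$ with the group commutator in $\gamma_n(G)$. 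Nilpotency of $G$ bounds the depth of this filtration and nilpotency of $\omega(K[G'])$ bounds the powers, and together they force $\gamma_m(K[G])=0$ for large $m$. The remaining sufficiency in~(3), where $p=2$ and $G$ merely has a $2$-abelian subgroup $A$ of index at most $2$, is then handled by solving $K[A]$ as above and propagating solvability across the degree-two extension $K[G]=K[A]\oplus K[A]t$.

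The necessity directions carry the real weight. That Lie nilpotence forces $G$ to be nilpotent I would obtain from the correspondence between the lower central series of $G$ and the Lie lower central series of $K[G]$: a refinement of the identity above shows that a non-trivial element of $\gamma_{s+1}(G)$ would contribute a non-zero element to $\gamma_{s+1}(K[G])$, so $\gamma_{s+1}(K[G])=0$ bounds the nilpotency class of $G$ by $s$. That solvability (or Lie nilpotence) forces $G'$ to be a finite $p$-group refines Passman's theorem, upgrading the $p$-abelian subgroup of finite index to a statement about $G$ itself by controlling the order of $G'$ through the derived length. I expect the characteristic-two case to be the main obstacle, and it is exactly where the three statements diverge: the clean inclusions $[I,I]\subseteq I^2$ used for sufficiency are not reversible, and in characteristic $2$ the quadratic terms one would like to discard do not vanish, so solvability of $K[G]$ no longer forces $G$ itself to be $p$-abelian but only that some subgroup of index at most $2$ is $2$-abelian. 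Isolating this phenomenon, namely showing that the failure is confined to an index-two subgroup and can be no worse, is the delicate heart of the argument, whereas for $p\ne 2$ the parallel analysis returns the sharper conclusion that $G$ is $p$-abelian outright.
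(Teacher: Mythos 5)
This theorem is stated in the survey as a cited result of \cite{PaPaSe73} with no proof given, so your attempt can only be judged on its own merits. Your sufficiency engine is sound: $I=K[G]\,\omega(K[G'])$ is a two-sided ideal (by normality of $G'$), it contains $[K[G],K[G]]$ via $gh-hg=gh(1-(h,g))$, it is nilpotent when $G'$ is a finite $p$-group in characteristic $p$, and the inclusion $\delta_n(K[G])\subseteq I^{2^{n-1}}$ does prove solvability whenever $G$ is $p$-abelian. The Lie nilpotence sufficiency in part (1) is only sketched, but it can be completed along standard lines (for instance by induction on $|G'|$, picking a central element $z\in G'\cap Z(G)$ of order $p$ and using that $1-z$ is central with $(1-z)^p=0$), so I regard that as an incomplete but repairable step.

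The genuine gap is the necessity half, which you yourself say ``carries the real weight'' and then describe rather than prove. Concretely: (i) your mechanism for ``Lie nilpotence of $K[G]$ forces $G$ nilpotent'' --- that a nontrivial element of $\gamma_{s+1}(G)$ contributes a nonzero element of $\gamma_{s+1}(K[G])$ --- rests on a correspondence that fails as stated. Iterating $(g,h)-1=g^{-1}h^{-1}[g,h]$ and expanding with $[ab,c]=a[b,c]+[a,c]b$ produces, besides the weight-$n$ Lie commutator, extra terms that are \emph{products of lower-weight commutators} multiplied by units; hence group commutators of weight $n$ naturally land in the upper Lie power $K[G]^{(n)}$ (an associative ideal), not in the Lie span $\gamma_n(K[G])$. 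Passing from the vanishing of $\gamma_{s+1}(K[G])$ to control of the ideals $K[G]^{(n)}$ is precisely what requires the products-of-commutators lemmas of Gupta--Levin \cite{GupLev83} and Sharma--Srivastava \cite{ShaSri90} (the associative antecedents of Theorem~\ref{Tcomutt} and Lemma~\ref{Lgamma-2} in this survey); the fact that Theorem~\ref{Tgr-nilp} needs $p>3$ to identify the Lie and strong Lie nilpotency classes should warn you that the two series cannot be naively identified. (ii) The claim that Lie nilpotence or solvability forces $G'$ to be a finite $p$-group is asserted, not argued: Theorem~\ref{TKG} only yields a $p$-abelian subgroup $A$ of finite index, and upgrading finiteness of $A'$ to finiteness of $G'$ is where \cite{PaPaSe73} deploys the delta-set machinery (cf.\ the delta-set subsection of this paper); nothing in your text substitutes for it. (iii) In part (3) both directions are open: you concede the characteristic-two necessity is ``the delicate heart'' without supplying it, and even the sufficiency step of propagating solvability across $K[G]=K[A]\oplus K[A]t$ requires an explicit characteristic-two computation that is not given. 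So what you have is a correct plan for the ``if'' directions together with an accurate description of what the ``only if'' directions must accomplish --- not a proof of the theorem.
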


Using the upper Lie powers, one defines {\it Lie dimension subgroups} of a group (our enumeration is shifted)~\cite{Passi}:
$$
D_{(n),K}(G)=G\cap (1+K[G]^{(n)}), \qquad n\ge 0.
$$
One also has the following description~\cite{BhaPas92}:
\begin{equation}\label{dim-subgroups}
D_{(n),K}(G)=\prod_{(i-1)p^k\ge n} \gamma_i(G)^{p^k},\quad n\ge 0.
\end{equation}
There is a formula for the Lie nilpotency class of a modular group ring.
\begin{Th}[\cite{BhaPas92}]\label{Tgr-nilp}
Let $G$ be a group, $K$ a field of characteristic $p>3$ such
that the group ring $K[G]$ is Lie nilpotent.
Then the Lie nilpotency class of $K[G]$ coincides with its strong Lie nilpotency class and is equal to
$$
1+(p-1)\sum_{m\ge 1}m \log_p \big|D_{(m),K}(G):D_{(m+1),K}(G)\big|.
$$
\end{Th}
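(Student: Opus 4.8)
The plan is to compute the strong Lie nilpotency class $t^{L}:=t^{L}(K[G])$ and the ordinary Lie nilpotency class $t_{L}:=t_{L}(K[G])$ separately and then to compare them. One inequality is automatic: in any associative algebra $A$ a short induction from $\gamma_{1}(A)=A=A^{(0)}$ and $\gamma_{n+1}(A)=[\gamma_{n}(A),A]\subseteq[A^{(n-1)},A]A=A^{(n)}$ yields $\gamma_{n+1}(A)\subseteq A^{(n)}$ for all $n$; hence $A^{(s)}=0$ forces $\gamma_{s+1}(A)=0$, so $t_{L}\le t^{L}$. The problem therefore splits into (a) evaluating $t^{L}$ by the stated formula and (b) establishing the reverse inequality $t^{L}\le t_{L}$, which is where the hypothesis $p>3$ will be used.

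For (a) I would work with the multiplicative filtration of $K[G]$ by the upper Lie powers $\{K[G]^{(n)}\}_{n\ge 0}$, whose layers are governed by the Lie dimension subgroups $D_{(n),K}(G)=G\cap(1+K[G]^{(n)})$. Put $d_{m}:=\log_{p}\bigl|D_{(m),K}(G):D_{(m+1),K}(G)\bigr|$; each quotient $D_{(m)}/D_{(m+1)}$ is an elementary abelian $p$-group of rank $d_{m}$, and since $K[G]$ is Lie nilpotent, Theorem~\ref{TKGNS} makes $G$ nilpotent and $p$-abelian, so by~\eqref{dim-subgroups} only finitely many $d_{m}$ are nonzero. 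Assembling the graded restricted Lie algebra $\mathcal{L}=\bigoplus_{m\ge 1}D_{(m)}/D_{(m+1)}$, with $\mathbb{F}_{p}$-dimension $d_{m}$ in degree $m$, I would identify $\gr K[G]$ for this filtration with a homogeneous quotient of its restricted enveloping algebra; this identification, via Jennings--Quillen-type theory, is the technical heart of (a). A PBW basis then consists of monomials $\prod_{i}x_{i}^{a_{i}}$ with $0\le a_{i}\le p-1$ and $\deg x_{i}=m_{i}$, whose top degree is attained by $\prod_{i}x_{i}^{p-1}$ and equals $(p-1)\sum_{i}m_{i}=(p-1)\sum_{m\ge 1}m\,d_{m}=:D$. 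Consequently $K[G]^{(D)}\ne 0$ while $K[G]^{(D+1)}=0$, giving $t^{L}=1+D=1+(p-1)\sum_{m\ge 1}m\,d_{m}$.

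For (b) I must show that $\gamma_{t_{L}+1}(K[G])=0$ already forces $K[G]^{(t_{L})}=0$, i.e., that vanishing of the merely additive lower central series drags down the two-sided ideals $K[G]^{(n)}$ at the same index. The obstruction is that $K[G]^{(n)}$ is obtained from $\gamma_{n+1}$ by left and right multiplication, so it could a priori survive past $\gamma_{n+1}$. The idea is to use the derivation identity $[xy,z]=x[y,z]+[x,z]y$ together with the Jacobi identity to push such products back into bounded lower Lie powers while controlling the correction terms in the top degree $D$, thereby forcing the homogeneous components of $\gamma_{n+1}(K[G])$ and of $K[G]^{(n)}$ to coincide there. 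I expect this to be the main obstacle, and it is exactly where $p>3$ is indispensable: in characteristics $2$ and $3$ the commutator relations needed to absorb multiplication degenerate and the correction terms no longer vanish, so that $t_{L}<t^{L}$ can genuinely occur and both the coincidence and the clean formula break down. For $p>3$ these degeneracies are absent, and verifying precisely that no extra top-degree contribution appears is the crux of the proof.
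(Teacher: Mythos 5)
Note first that this survey does not prove Theorem~\ref{Tgr-nilp}: it is quoted from \cite{BhaPas92}. But Subsection~\ref{Scomm} records precisely which technical result drives that proof, and measured against it your proposal has the correct skeleton — the trivial inequality $t_L\le t^L$, a Jennings-type computation of the strong class (which indeed holds for every $p>0$, cf.\ Theorem~\ref{Tu-nilp}(1) and Theorem~\ref{Tclasses}), and $p>3$ confined to the reverse inequality — yet both substantive steps are announced rather than proved, and the mechanism you announce for step (b) cannot work as described.

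Here is the concrete failure. Expanding $[\gamma_{n+1}(R)R,R]R$ by the Leibnitz rule gives $\gamma_{n+2}(R)R+\gamma_{n+1}(R)\gamma_2(R)R$, so your plan of ``absorbing correction terms'' needs the inclusion $\gamma_m(R)\gamma_n(R)\subseteq\gamma_{m+n-1}(R)R$. This is not a formal consequence of the Leibnitz and Jacobi identities, and it is \emph{false} in general, in any characteristic: the Grassmann algebra $E$ over $\mathbb{Q}$ is Lie nilpotent of class $2$, yet $[e_1,e_2][e_3,e_4]=4e_1e_2e_3e_4\ne 0$, so $\gamma_2(E)\gamma_2(E)\not\subseteq\gamma_3(E)E=0$. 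Hence no characteristic-free manipulation can close your induction. What is true is the theorem of Sharma and Srivastava \cite[Theorem~2.8]{ShaSri90} (the associative analogue of Theorem~\ref{Tcomutt}(1)): the inclusion holds when $\ch K\ne 2,3$ and, crucially, \emph{one of $m,n$ is odd}; without the parity hypothesis one only has $\gamma_m(R)\gamma_n(R)\subseteq\gamma_{m+n-2}(R)R$ (Gupta--Levin \cite{GupLev83}, cf.\ Lemma~\ref{Lgamma-2}). Because of this parity restriction, your index-by-index induction on $K[G]^{(n)}\subseteq\gamma_{n+1}(K[G])K[G]$ stalls at every second step, where the correction term is $\gamma_{\mathrm{even}}\cdot\gamma_2$. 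The actual proofs (\cite{BhaPas92}; \cite{RiSh95} for $u(L)$; \cite{PeIlana17} for $\SSS(L)$) therefore run the inequality the other way: they bound the \emph{ordinary} class from below by taking the nonzero top PBW monomial from your part (a) — a product of $p^k(p-1)$-st powers of (elements congruent to) group commutators — and placing it in $\gamma_t(K[G])\cdot K[G]$ with $t=1+(p-1)\sum_m m\,d_m$, using the companion result $[x_1,\dots,x_n]^m\in\gamma_{(n-1)m+1}(R)R$ (the associative Theorem~\ref{Tcomutt}(2)) and then Claim (1), which is applicable because all the lower-central degrees $(i-1)p^k(p-1)+1$ that arise are odd. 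So the role of $p>3$ is localized entirely in this lemma, not in a vague ``degeneration of commutator relations''; and your side remark that $t_L<t^L$ ``genuinely occurs'' for $p=2,3$ asserts more than is known. Two smaller points on (a): if $\gr K[G]$ were only a homogeneous \emph{quotient} of the restricted envelope of $\bigoplus_m D_{(m),K}(G)/D_{(m+1),K}(G)$, you would obtain only $t^L\le 1+D$; the nonvanishing $K[G]^{(D)}\ne 0$ requires the full Jennings--Passi--Sehgal isomorphism for the Lie filtration \cite{Passi}, and the termination of the filtration at $0$ uses that Lie nilpotent group rings are strongly Lie nilpotent, which must be imported from the classical theory (cf.\ Theorem~\ref{TKGNS}).
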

The original work  cased more research projects.
So called generalized polynomial identities of (twisted) group rings were studied in~\cite{Pas71,Pas01}.
Also, a relation with gradings of PI-algebras see in~\cite{AljDav15}.

A group $G$ is said to have the $n$-{\it rewritable property} $Q_n$ if for all
elements $g_1,g_2,\ldots,g_n \in G$, there exist two distinct permutations $\sigma,\tau\in Sym_ n$ such that
$g_{\sigma(1)}g_{\sigma(2)}\cdots g_{\sigma(n)} = g_{\tau(1)}g_{\tau(2)}\cdots g_{\tau(n)}$~\cite{Blyth88}.
The following result is a further application of delta-sets (see definitions below).
\begin{Th}[\cite{Blyth88,ElaPas11}]
If a group $G$ satisfies $Q_n$, then $G$ has a characteristic
subgroup $N$ such that $| G : N |$ and $|N'|$ are both finite and have
sizes bounded by functions of $n$.
\end{Th}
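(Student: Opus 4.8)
The plan is to produce the required subgroup as the FC-center
\[
\Delta(G)=\{\,g\in G : |G:C_G(g)|<\infty\,\},
\]
which is automatically a characteristic subgroup, and to set $N=\Delta(G)$. Everything then reduces to two quantitative statements, each extracted from the hypothesis $Q_n$: first that $|G:\Delta(G)|$ is finite and bounded by a function of $n$, and second that $|\Delta(G)'|$ is finite and bounded by a function of $n$. Granting these, $N=\Delta(G)$ satisfies all the conclusions at once, since $|G:N|=|G:\Delta(G)|$ and $|N'|=|\Delta(G)'|$, and $N$ is characteristic.

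To bound the derived subgroup of $\Delta(G)$, I would first show that $Q_n$ forces a uniform bound $m=m(n)$ on the sizes of those conjugacy classes of $G$ lying inside $\Delta(G)$; concretely, if some $g\in\Delta(G)$ had many distinct conjugates $g^{h_1},\dots,g^{h_k}$, one assembles from $g$ and the elements $h_i$ a sequence of $n$ group elements whose products under the symmetric group are forced to be pairwise distinct once $k$ exceeds a threshold depending on $n$, contradicting $Q_n$. Since $\Delta(G)$-classes are contained in $G$-classes, this makes $\Delta(G)$ a BFC-group with class sizes at most $m(n)$, and the theorem of B.~H.~Neumann then yields $|\Delta(G)'|\le g(n)$ for an explicit function $g$.

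Bounding the index $|G:\Delta(G)|$ is where the delta-set machinery is essential and where I expect the main difficulty to lie. The idea is that $Q_n$ severely restricts how ``non-FC'' the group can be: if $G/\Delta(G)$ were large, one could select elements in many distinct cosets whose mutual commutators are genuinely nontrivial, and feed them into $Q_n$ to manufacture $n$-tuples admitting no nontrivial rewriting. Turning this into a uniform bound $f(n)$ is delicate, since one must control simultaneously the combinatorics of the permutations and the independence of the chosen commutators; this is precisely the role of the delta-methods---coset coverings in the style of B.~H.~Neumann together with the analysis of $\Delta(G)$ and $\Delta^+(G)$---which convert the monomial rewriting hypothesis into finiteness statements about conjugation.

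With both bounds in hand, $N=\Delta(G)$ is characteristic, of index at most $f(n)$, and has derived subgroup of order at most $g(n)$, which is exactly the assertion. The remaining care is bookkeeping: verifying that the sequences built in the two steps genuinely use a set of $n$ elements and that the resulting bounds depend on $n$ alone and not on the particular group $G$.
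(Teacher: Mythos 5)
The paper itself contains no proof of this theorem: it is a survey statement quoted from Blyth and Elashiry--Passman, with only the hint that it is ``a further application of delta-sets''. So your proposal can only be judged on its own merits, and unfortunately its central step is not just unproved but false. The choice $N=\Delta(G)$ cannot work, and neither can your first quantitative claim that $Q_n$ uniformly bounds the conjugacy class sizes of elements of $\Delta(G)$. Concretely, every finite dihedral group $D_m$ satisfies $Q_4$: any group with an abelian subgroup $A$ of index $2$ lies in $Q_4$, because among any four elements either two lie in $A$ and commute (place them adjacent to get two distinct permutations with equal products), or three lie in the nontrivial coset $xA$, say $xa_1,xa_2,xa_3$, and then $(xa_1)(xa_2)(xa_3)=x^3\,a_1a_2^xa_3$ is unchanged when the order of the three factors is reversed, since $A$ is abelian and $x^2\in A$. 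But $D_m$ is finite, so $\Delta(D_m)=D_m$; its reflections are FC-elements with classes of size roughly $m/2$, and $|\Delta(D_m)'|=|D_m'|\ge m/2$ is unbounded as $m\to\infty$. Hence no function of $n$ bounds class sizes inside $\Delta(G)$, the BFC/B.H.~Neumann argument you invoke has no valid input, and the conclusion $|\Delta(G)'|\le g(n)$ is simply false. (The other half of your plan, that $|G:\Delta(G)|$ is bounded, is a true consequence of the theorem, but it is not enough, and your sketch defers its proof anyway.)

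The subgroup $N$ must be built from the \emph{bounded} delta-sets $\Delta_k(G)=\{a\in G: |a^G|\le k\}$ with $k=k(n)$, not from the full FC-center: in the example above the correct subgroup is the rotation subgroup $\Delta_2(D_m)$, which is characteristic, abelian and of index $2$, whereas $\Delta(D_m)$ is everything. This is exactly how the cited proofs proceed: the rewriting hypothesis $Q_n$ is converted, by the hard combinatorial/coset-covering arguments that your sketch also leaves unproved, into a statement that $G$ is covered by boundedly many translates of $\Delta_k(G)$ for bounded $k$; one then shows the subgroup generated by $\Delta_k(G)$ (characteristic, because $\Delta_k(G)$ is an automorphism-invariant subset) is again contained in some $\Delta_{k'}(G)$ with $k'$ bounded, and only at that point does B.H.~Neumann's BFC theorem yield the bound on $|N'|$. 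So beyond the missing combinatorics, the proposal rests on a wrong structural choice, and repairing it requires replacing $\Delta(G)$ by the bounded delta-set machinery throughout.
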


\subsection*{Problem}
We suggest a  problem to find an analogue of the rewritable property and characterise it  in case of Lie algebras.

\section{Identical relations of enveloping algebras}
Latyshev proved that the universal enveloping algebra of a finite
dimensional Lie algebra over a field of characteristic zero satisfies a nontrivial
polynomial identity if and only if the Lie algebra is abelian~\cite{Lat63}.
Bahturin noticed that the condition of a finite dimensionality is inessential (see e.g.~\cite{Ba}).

Bahturin settled a similar problem on the existence of  a
nontrivial identity for the universal enveloping algebra over a
field of positive characteristic~\cite{Ba74}.
Also, Bahturin found necessary and sufficient conditions for the universal enveloping algebra
of a Lie superalgebra over a field of characteristic zero to satisfy a non-trivial polynomial identity~\cite{Ba85}.
PI-subrings and algebraic elements in universal  enveloping algebras and their fields of fractions were studied by Lichtman~\cite{Lichtman89}.
\medskip

Passman~\cite{Pas90} and Petrogradsky~\cite{Pe91} described
restricted Lie algebras $L$ whose restricted enveloping
algebra $u(L)$ satisfies a nontrivial polynomial identity.

\begin{Th}[\cite{Pas90},~\cite{Pe91}]
\label{TuL}
Let $L$ be a restricted Lie algebra over a field of characteristic $p>0$.
The restricted enveloping algebra $u(L)$ satisfies
a nontrivial polynomial identity if and only if
there exist restricted ideals $Q\subseteq H\subseteq L$ such that
 \begin{enumerate}
   \item
     $\dim L/H<\infty$, $\dim Q<\infty$;
   \item
     $H/Q$ is abelian;
   \item
     $Q$ is abelian and has a nilpotent $p$-mapping.
 \end{enumerate}
\end{Th}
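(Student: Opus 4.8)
The plan is to prove the two implications separately: the sufficiency is constructive, while the necessity rests on a version of Passman's $\Delta$-methods adapted to restricted Lie algebras. In both directions one first reduces, after extending $K$ to an infinite (or algebraically closed) field if necessary, to a single multilinear identity of some fixed degree $n$; this is harmless, since the PI property and each of the three conditions on $L$ survive scalar extension (finite dimensions stay finite, and abelianness together with nilpotency of the $p$-mapping are preserved).

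For sufficiency, suppose such $Q\subseteq H\subseteq L$ exist. I would first remove $Q$. As $Q$ is finite-dimensional, abelian and $p$-nilpotent, $u(Q)$ is a finite-dimensional commutative local algebra whose augmentation ideal $\omega(Q)$ is nilpotent, say $\omega(Q)^N=0$. Since $Q$ is a restricted ideal of $L$, the ideal $I=u(L)\omega(Q)=\omega(Q)u(L)$ is two-sided, and sliding $\omega(Q)$ to the right yields $I^N\subseteq u(L)\omega(Q)^N=0$; moreover $u(L)/I\cong u(L/Q)$. A nilpotent ideal does not affect the PI property, so it suffices to treat $L/Q$, i.e.\ we may assume $Q=0$ and that $H$ is an \emph{abelian} restricted ideal of finite codimension. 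Then $u(H)$ is commutative and, by the restricted PBW theorem, $u(L)$ is a free left $u(H)$-module of finite rank $r=p^{\dim L/H}$. Right multiplication is left $u(H)$-linear, so $x\mapsto(a\mapsto ax)$ is an injective anti-homomorphism of $u(L)$ into $\mathrm{End}_{u(H)}(u(L))\cong M_r(u(H))$; since $u(H)$ is commutative the latter satisfies the standard identity $s_{2r}$, whence $u(L)^{\mathrm{op}}$, and therefore $u(L)$, is PI.

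For necessity, assume $u(L)$ is PI and introduce the finite part $\Delta(L)=\{x\in L:\dim_K[x,L]<\infty\}$. A short computation with the Jacobi identity, together with $\ad(x^{[p]})=(\ad x)^p$, shows that $\Delta(L)$ is a restricted ideal. The core of the argument is then a combinatorial $\Delta$-lemma, modelled on Passman's coset and alternation counting for group rings but carried out with the PBW basis of $u(L)$ in place of the group basis: the existence of a degree-$n$ identity is shown to force both $\dim L/\Delta(L)<\infty$ and the existence inside $\Delta(L)$ of a finite-codimension restricted ideal $H$ with $\dim[H,H]<\infty$. One then takes $Q$ to be the restricted ideal generated by $[H,H]$ and its $p$-power closure; it is finite-dimensional and abelian, $H/Q$ is abelian by construction, and the $p$-mapping on $Q$ must be nilpotent, for otherwise a toral (semisimple) part of $Q$ would, through its action, create separable subalgebras of unbounded matrix size inside $u(L)$, contradicting the identity.

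The main obstacle is precisely this combinatorial core of the necessity direction --- turning a single multilinear identity into the two finiteness statements $\dim L/\Delta(L)<\infty$ and $\dim[H,H]<\infty$. The verification that $\Delta(L)$ is a restricted ideal and the entire sufficiency direction are routine by comparison; the real work lies in rebuilding Passman's delta-set machinery inside the associative algebra $u(L)$, where products of basis elements, rather than a group multiplication table, govern the combinatorics.
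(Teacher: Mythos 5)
First, a remark on the comparison itself: the paper is a survey and states Theorem~\ref{TuL} without proof, citing \cite{Pas90,Pe91}, so your attempt can only be measured against those original arguments. Your sufficiency direction is correct and is essentially the standard one: killing the nilpotent ideal $u(L)\omega(Q)$ reduces to the case $Q=0$, and then the right regular representation embeds $u(L)$ into $M_r(u(H))$ with $r=p^{\dim L/H}$, which satisfies the standard identity $s_{2r}$ by Amitsur--Levitzki. The necessity direction, however, has a fatal gap, and it sits exactly in the step you declare to be routine, not in the combinatorial core you defer.

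Your plan is: (i) from the identity deduce $\dim L/\Delta(L)<\infty$ together with a finite-codimension restricted ideal $H\subseteq\Delta(L)$ satisfying $\dim[H,H]<\infty$; (ii) finish formally by taking $Q$ to be the restricted ideal generated by $[H,H]$ together with its $p$-power closure. Step (ii) cannot work, because the conclusions of (i) simply do not imply the conditions of the theorem. Take $L$ with basis $\{a_i,b_i,c_j\mid i\in\N,\ j\ge 0\}$, where $[a_i,b_i]=c_0$, all other brackets are zero, all $c_j$ are central, $a_i^{[p]}=b_i^{[p]}=0$ and $c_j^{[p]}=c_{j+1}$ (a legitimate restricted structure by Jacobson's theorem). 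Then $\Delta(L)=L$ and $[L,L]=\langle c_0\rangle_K$ is one-dimensional, so (i) holds with $H=L$; moreover, a symplectic-form argument shows that \emph{every} finite-codimension ideal $H$ satisfies $c_0\in[H,H]$, so no admissible choice of $H$ helps. Yet $u(L)$ is not PI: in $u(L)$ one has $c_0^{p^j}=c_j$, so $K[c_0]$ is a polynomial ring, and the quotient $u(L)/(c_0-1)$ contains, for every $n$, the $n$ pairwise commuting subalgebras $\langle \bar a_i,\bar b_i\mid \bar a_i\bar b_i-\bar b_i\bar a_i=1,\ \bar a_i^{\,p}=\bar b_i^{\,p}=0\rangle\cong M_p(K)$, whose product is $M_{p^n}(K)$. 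Consequently your $Q$ is infinite-dimensional with non-nilpotent $p$-mapping, and no ``contradiction with the identity'' can be derived at that stage, because after step (i) the identity has already been discarded: controlling the $p$-mapping requires invoking the PI hypothesis again, and this is precisely the hard ``semisimple part'' analysis of \cite{Pas90,Pe91}. (It is also exactly the point where Theorem~\ref{TuL} is harder than Theorem~\ref{Tmain}: in truncated symmetric algebras $p$-th powers vanish under the bracket, so no such condition appears there.) Two smaller errors point the same way: abelianness of your $Q$ fails too --- for $L=\mathfrak{sl}_2\oplus A$ with $A$ infinite-dimensional abelian, $u(L)$ \emph{is} PI, $\Delta(L)=L$, and your recipe applied to $H=L$ returns $Q=\mathfrak{sl}_2$, which is neither abelian nor $p$-nilpotent; the theorem holds there with $H=A$ and $Q=0$, i.e.\ one must shrink $H$ rather than enlarge $Q$. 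And your heuristic about toral parts is backwards: a central torus generates a separable \emph{commutative} subalgebra, harmless for PI (it can be absorbed into $H$, since only $H/Q$ need be abelian); what actually destroys PI is a Heisenberg pair whose commutator has non-nilpotent $p$-mapping, as in the example above.
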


Riley and Shalev determined when $u(L)$
is Lie nilpotent, solvable (for $p>2$), or satisfies the Engel condition~\cite{RiSh93}.

\begin{Th}[\cite{RiSh93}]\label{uLNS}
Let $u(L)$ be the restricted enveloping algebra of a restricted Lie algebra $L$ over a field $K$ of characteristic $p>0$.
\begin{enumerate}
  \item
    $u(L)$ is Lie nilpotent if and only if $L$ is nilpotent and $L^2$ is finite dimensional and $p$-nilpotent;
  \item
    $u(L)$ is $n$-Engel for some $n$ if and only if $L$ is nilpotent, $L^2$ is $p$-nilpotent, and $L$ has a restricted ideal $A$ such that both
    $L/A$ and $A^2$ are finite dimensional.
  \item
    $u(L)$ is solvable if and only if $L^2$ is finite dimensional and $p$-nilpotent, for $p\neq 2$.
    \end{enumerate}
\end{Th}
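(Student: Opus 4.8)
The plan is to treat the three equivalences separately, in each case splitting into necessity and sufficiency, and to exploit throughout the analogy with the group ring situation of Theorems~\ref{TKGNS} and~\ref{Tgr-nilp}. The basic device is the Poincar\'e--Birkhoff--Witt basis of $u(L)$ together with the embedding $L\hookrightarrow u(L)$, under which the Lie bracket of $L$ coincides with the commutator $[a,b]=ab-ba$ of $u(L)$. Consequently $\gamma_n(L)\subseteq\gamma_n(u(L))$ for all $n$, so that Lie nilpotence (and, through the Engel identity, the Engel condition) of $u(L)$ immediately forces $L$ to be nilpotent; this disposes of one of the necessary conditions in parts (1) and (2) with no further work.

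For the remaining necessary conditions I would introduce the restricted dimension subalgebras $D_{(n)}(L)=L\cap u(L)^{(n)}$, the exact analogue of the Lie dimension subgroups preceding~(\ref{dim-subgroups}). The first task is to establish a description of $D_{(n)}(L)$ parallel to~(\ref{dim-subgroups}), namely that $D_{(n)}(L)$ is assembled from the terms $\gamma_i(L)^{[p]^k}$ with $(i-1)p^k\ge n$, where $[p]$ denotes the restricted $p$-map. Granting such a formula, strong Lie nilpotence of $u(L)$, i.e. $u(L)^{(n)}=0$ for some $n$, translates into the vanishing of all these terms for large $n$, which is exactly the assertion that $L$ is nilpotent and that $L^2$ is finite dimensional with nilpotent $p$-map; that strong Lie nilpotence then coincides with Lie nilpotence in this setting is part of what the formula yields, just as in Theorem~\ref{Tgr-nilp}.

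For sufficiency in part (1) the key observation is that when $L^2$ is finite dimensional and $p$-nilpotent, the restricted subalgebra $R$ generated by $L^2$ is again finite dimensional and nilpotent with nilpotent $p$-map (note $R$ is a restricted ideal, since $(\ad x)^p(y)\in L^2$ for $x\in L^2$); hence $u(R)$ is a finite-dimensional local algebra whose augmentation ideal $\omega(R)$ is nilpotent. One then shows that the ideal $J$ of $u(L)$ generated by $L^2$ (equivalently by $\omega(R)$) is nilpotent and contains $\gamma_2(u(L))$, and a filtration argument using the central series of $L$ bounds the upper Lie powers $u(L)^{(n)}$ and forces them to vanish. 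Part (3), the solvability criterion for $p\neq 2$, rests on the same local algebra $u(R)$: here $L$ itself need not be nilpotent, but since $u(L)/J\cong u(L/R)$ is commutative, the derived series of $u(L)$ descends into the nilpotent ideal $J$ after one step and then terminates; the restriction $p\neq 2$ enters exactly as in Theorem~\ref{TKGNS}(3), a parity phenomenon in the behaviour of the derived series. Part (2) is intermediate: the Engel condition is weaker than Lie nilpotence but stronger than being PI, so I would combine the structure theorem~\ref{TuL}, which already supplies a finite-codimensional restricted ideal $A$, with the nilpotency analysis of part (1) applied to $A$.

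The main obstacle I expect is the precise control of the $p$-map in the dimension-subalgebra formula: separating the ordinary lower central contributions $\gamma_i(L)$ from the Frobenius contributions $\gamma_i(L)^{[p]^k}$ requires a careful induction on the $\omega$-adic filtration of $u(L)$ and a restricted analogue of the Jennings--Quillen theory, and it is here that the hypothesis of $p$-nilpotence of $L^2$ is genuinely used rather than merely the finite dimensionality. The subtlety is that a non-$p$-nilpotent element of $L^2$ contributes a semisimple direction whose enveloping algebra, though commutative, prevents $\omega(R)$ from being nilpotent; tracking this obstruction cleanly through the Lie-power filtration is the technical heart of the argument.
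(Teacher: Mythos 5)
Your sufficiency arguments are sound in outline and essentially follow the standard route of \cite{RiSh93}: the restricted closure $R$ of $L^2$ is a finite-dimensional $p$-nilpotent restricted ideal, $\omega(R)u(L)$ is a nilpotent ideal with $u(L)/\omega(R)u(L)\cong u(L/R)$ commutative, and one finishes by a filtration argument along the lower central series of $L$ for part (1), or via $\delta_{k+1}(u(L))\subseteq(\omega(R)u(L))^{2^{k}}$ for part (3) (note this half needs no restriction on $p$). Bear in mind also that the survey states Theorem~\ref{uLNS} without proof, so the comparison is with Riley--Shalev's argument, whose key instruments the survey does record: the delta-sets (Lemma~\ref{PropDel}, Lemma~\ref{DeltaI}) and Neumann's theorem on bilinear maps (Theorem~\ref{TNeu}).

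The genuine gap is in the necessity direction, and it is twofold. First, the Lie dimension subalgebras $D_{(n)}(L)=L\cap u(L)^{(n)}$ are built from the \emph{upper} Lie powers, so they respond to \emph{strong} Lie nilpotence; the hypothesis of part (1) is only $\gamma_{n}(u(L))=0$, which gives no control on $u(L)^{(n)}$, and you cannot invoke ``Lie nilpotent iff strongly Lie nilpotent'' to bridge this, since that equivalence is itself a corollary of the theorem you are proving (the coincidence of the two \emph{classes} in Theorem~\ref{Tu-nilp} even requires $p>3$). Second, and more seriously, even granting $u(L)^{(n)}=0$, the vanishing of all terms $\gamma_i(L)^{[p^k]}$ with $(i-1)p^k\ge n$ in \eqref{dim-subalg} is \emph{not} ``exactly'' the condition of part (1): it yields nilpotence of $L$ and bounded $p$-nilpotence of $L^2$, but it cannot detect $\dim L^2<\infty$. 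Concretely, take $L$ with basis $\{x,y_i,z_i\mid i\in\N\}$, $[x,y_i]=z_i$ central, equipped with a $p$-map vanishing on $L^2$ (the algebra of Lemma~\ref{L22b}, suitably restricted): every term of \eqref{dim-subalg} with $n\ge 2$ vanishes, yet $\dim L^2=\infty$ and $u(L)$ is not Lie nilpotent, by the very theorem in question. Finite-dimensionality must instead be extracted by delta methods: Lie nilpotence (or solvability with $p\ne 2$) gives a nontrivial multilinear PI, the delta-set analysis behind Theorem~\ref{TuL} yields $\dim L/\Delta_M(L)<\infty$ with $\Delta(L)=\Delta_M(L)$, and Neumann's Theorem~\ref{TNeu}, applied to the bracket $\Delta_M(L)\times\Delta_M(L)\to L$, bounds $\dim \Delta(L)^2\le M^2$. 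This is the heart of \cite{RiSh93} and is absent from your plan, which also leaves the necessity half of part (3) unaddressed beyond the phrase ``parity phenomenon''. Finally, your claim that the Engel condition on $u(L)$ ``immediately forces $L$ to be nilpotent'' is false: the embedding $L\hookrightarrow u(L)$ only makes $L$ an $n$-Engel Lie algebra, and in characteristic $p$ such algebras need not be nilpotent (Razmyslov's examples); so the necessity of nilpotence in part (2) also requires the full enveloping-algebra hypothesis, not just the restriction of the identity to $L$.
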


Let $L$ be a restricted Lie algebra, $\ch K=p>0$.
Similarly to the dimension subgroups, using upper Lie powers (see the previous section),
Riley and Shalev defined {\it Lie dimension subalgebras}~\cite{RiSh95}:
$$
D_{(n)}(L)=L\cap u(L)^{(n)}, \qquad n\ge 0.
$$
(Recall that our enumeration is shifted).
They also gave the following description~\cite{RiSh95}:
\begin{equation}\label{dim-subalg}
D_{(n)}(L)=\sum_{(i-1)p^k\ge n} \gamma_i(L)^{[p^k]},\quad n\ge 0.
\end{equation}
Siciliano proved~\cite{Sic06} that in case $p>2$,
the strong solvability of the restricted enveloping algebra $u(L)$ is equivalent to its solvability.
Moreover, the strong solvability in case $p=2$ is described by the same conditions of Part 3 of Theorem~\ref{uLNS}.
Also, in case $p=2$ he provided an example of the restricted enveloping algebra $u(L)$ that is
solvable but not strongly solvable.

The following is an analogue of results on the Lie nilpotency classes of group rings (Theorem~\ref{Tgr-nilp}).
\begin{Th}[\cite{RiSh95}]\label{Tu-nilp}
Let $L$ be a restricted Lie algebra over a field $K$ of characteristic $p>0$ such
that $u(L)$ is Lie nilpotent. Then
\begin{enumerate}
\item The strong Lie nilpotency class of $u(L)$ is equal to
$$ 1+(p-1)\sum_{m\ge 1}m \dim (D_{(m)}(L)/D_{(m+1)}(L)). $$
\item In case $p>3$, the Lie nilpotency class coincides with the strong Lie nilpotency class.
\end{enumerate}
\end{Th}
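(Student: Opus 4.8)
The plan is to pass to the associated graded algebra of $u(L)$ with respect to the filtration by upper Lie powers and to read off the strong Lie nilpotency class from its top nonzero degree. As noted just before the theorem, $\{u(L)^{(n)}\mid n\ge 0\}$ is a multiplicative filtration, $u(L)^{(m)}u(L)^{(n)}\subseteq u(L)^{(m+n)}$; one checks in addition the Gupta--Levin type commutator estimate $[u(L)^{(m)},u(L)^{(n)}]\subseteq u(L)^{(m+n+1)}$, so that $R:=\gr u(L)=\bigoplus_{n\ge 0}u(L)^{(n)}/u(L)^{(n+1)}$ is a \emph{commutative} graded $K$-algebra. Since the strong Lie nilpotency class is the least $s$ with $u(L)^{(s)}=0$, it equals $1+N$, where $N=\max\{n:R_n\ne 0\}$ is the top nonzero degree of $R$. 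Everything thus reduces to computing $N$.

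The main step is a restricted Jennings-type isomorphism: I would show that $R$ is isomorphic, as a graded algebra, to the restricted enveloping algebra $u(\bar L)$ of the associated graded restricted Lie algebra $\bar L:=\bigoplus_{n\ge 0}D_{(n)}(L)/D_{(n+1)}(L)$, endowed with the induced bracket and $p$-operation. By commutativity of $R$ this $\bar L$ is abelian, and the image of $L$ in degree $n$ is exactly $D_{(n)}(L)/D_{(n+1)}(L)$, since $L\cap u(L)^{(n)}=D_{(n)}(L)$ by definition. As $u(L)$ is generated by $L$ and the filtration is multiplicative, $R$ is generated by the image of $L$, which gives a graded surjection $u(\bar L)\to R$; the content is its injectivity, i.e.\ a PBW property for the filtration. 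Here I would use the explicit description \eqref{dim-subalg}, $D_{(n)}(L)=\sum_{(i-1)p^k\ge n}\gamma_i(L)^{[p^k]}$, to fix a basis of $L$ adapted simultaneously to the filtration and to the $p$-operation, and then compare leading terms of ordered restricted monomials (equivalently, match graded dimensions in each degree). By Theorem~\ref{uLNS}(1), Lie nilpotence of $u(L)$ forces $L$ to be nilpotent with $L^2$ finite dimensional and $p$-nilpotent; for $n\ge 1$ every summand in \eqref{dim-subalg} has $i\ge 2$ and so lies in the finite-dimensional restricted subalgebra generated by $L^2$, whence all positive graded pieces of $\bar L$ are finite dimensional and the sums below are finite. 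I expect this PBW/injectivity step to be the main obstacle.

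Granting $R\cong u(\bar L)$, computing $N$ becomes a calculation in the restricted enveloping algebra of an abelian $p$-nilpotent graded restricted Lie algebra. Because the $p$-operation is $p$-semilinear and nilpotent, I would pick a homogeneous basis of $\bar L$ on which it is diagonal, organizing the basis into strings $e,e^{[p]},\dots,e^{[p^{t-1}]}$ with $e^{[p^t]}=0$, where $e^{[p^j]}$ has degree $p^j\deg e$. In $u(\bar L)$ each such string generates a truncated polynomial algebra $K[e]/(e^{p^t})$ (since $e^p=e^{[p]}$, and so on), whose top monomial $e^{p^t-1}$ has degree
\[
(p^t-1)\deg e=(p-1)\bigl(1+p+\cdots+p^{t-1}\bigr)\deg e ,
\]
the telescoping identity $(p-1)(1+p+\cdots+p^{t-1})=p^t-1$ being exactly what makes the final answer depend only on the degrees of basis elements. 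Multiplying top monomials over all strings gives $N=(p-1)\sum_i\deg y_i$ over a homogeneous basis $\{y_i\}$ of $\bar L$; collecting basis elements by degree, $\#\{i:\deg y_i=m\}=\dim\bigl(D_{(m)}(L)/D_{(m+1)}(L)\bigr)$, so that
\[
N=(p-1)\sum_{m\ge 1}m\,\dim\bigl(D_{(m)}(L)/D_{(m+1)}(L)\bigr),
\]
which yields the formula of part~(1).

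For part~(2) I would compare the lower central series $\gamma_n(u(L))$ with the upper Lie powers. The inclusion $\gamma_n(u(L))\subseteq u(L)^{(n-1)}$ holds in general (by induction from $\gamma_{n+1}=[\gamma_n,u(L)]\subseteq[u(L)^{(n-1)},u(L)]\subseteq u(L)^{(n)}$), so the Lie nilpotency class never exceeds the strong one; equality when $p>3$ requires only that the top nonzero terms of the two series sit in the same degree, i.e.\ $\gamma_s(u(L))\ne 0$ for $s$ equal to the value of part~(1). The obstruction to coincidence comes from extra products absorbed once $p>3$: the identities controlling the gap push the discrepancy into components built from $\gamma_2,\gamma_3$ that vanish in the relevant top degree for $p>3$, exactly as in the Sharma--Srivastava phenomenon underlying Theorem~\ref{Tgr-nilp}. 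Verifying this nonvanishing in top degree completes the identification of the two classes.
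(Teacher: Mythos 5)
First, a caveat on the comparison itself: this survey states Theorem~\ref{Tu-nilp} without proof, as a result quoted from~\cite{RiSh95}, so your proposal can only be judged against Riley and Shalev's original argument (whose group-ring prototype is~\cite{ShaSri90,BhaPas92}), not against anything proved in the paper. With that understood, your part~(1) is in essence the known route, and its ``main obstacle'' is fillable exactly as you sketch: the real content is that $u(L)^{(n)}$ is the span of restricted PBW monomials of weighted degree $\ge n$ in a basis of $L$ adapted to the chain $L\supseteq D_{(1)}(L)\supseteq D_{(2)}(L)\supseteq\cdots$. Spanning follows by induction from $[D_{(k)}(L),L]\subseteq D_{(k+1)}(L)$ (immediate from $[u(L)^{(k)},u(L)]\subseteq u(L)^{(k+1)}$, no need for~\eqref{dim-subalg}), and linear independence follows because PBW straightening only increases weight: swaps cost $[D_{(k)},D_{(l)}]\subseteq D_{(k+l+1)}$ and $p$-th powers land in weight $pk$. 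Theorem~\ref{uLNS}(1) gives $\dim D_{(1)}(L)<\infty$, so the top monomial $\prod_i y_i^{p-1}$ over the positive-weight basis elements is nonzero and of weight $(p-1)\sum_{m\ge1}m\dim(D_{(m)}(L)/D_{(m+1)}(L))$, yielding the formula. Two slips to fix in a write-up: the estimate $[u(L)^{(m)},u(L)^{(n)}]\subseteq u(L)^{(m+n+1)}$ is not ``Gupta--Levin type'' but follows in any characteristic from multiplicativity of the filtration plus the Jacobi identity; and your ``strings'' $e,e^{[p]},\dots$ with $e^{[p^t]}=0$ need not exist on the degree-zero component $L/D_{(1)}(L)$, whose $p$-map need not be nilpotent (harmless, since weight-zero elements contribute nothing to the top degree, but it must be said).

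The genuine gap is part~(2). What you actually establish there is the trivial half ($\gamma_{n+1}(u(L))\subseteq u(L)^{(n)}$, so Lie class $\le$ strong class) plus a correct reformulation of what remains: writing $c$ for the number in part~(1), one must prove $\gamma_{c}(u(L))\ne 0$. Your sentence that the discrepancy is ``pushed into components built from $\gamma_2,\gamma_3$ that vanish in the relevant top degree'' is not an argument and misdescribes the mechanism; the nonvanishing is produced positively, via the products-of-commutators theorems whose associative versions the survey recalls in Section~\ref{Scomm} precisely because, as it stresses, their validity ``is not automatically clear''. Concretely, one either proves that $\ch K>3$ implies $u(L)^{(n)}=\gamma_{n+1}(u(L))\,u(L)$ for all $n$, so that $u(L)^{(c-1)}\ne0$ forces $\gamma_c(u(L))\ne0$; or one chooses the adapted basis elements $y_i$ to be $p^k$-th powers of Lie commutators (possible, modulo deeper terms, by~\eqref{dim-subalg}) and checks that the nonzero monomial $\prod_i y_i^{p-1}$ lies in $\gamma_c(u(L))\,u(L)$: the associative analogue of Claim~2 of Theorem~\ref{Tcomutt} puts each factor $y_i^{p-1}$ in $\gamma_{(p-1)w_i+1}(u(L))u(L)$, and the analogue of Claim~1 multiplies these together because each $(p-1)w_i$ is even -- this is exactly where $p>3$ enters ($p\ne 2$ for the parity, $p\ne 3$ for Claim~1). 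These product theorems are delicate, parity-sensitive inductions (the naive induction $[\gamma_{n+1}A,A]A\subseteq\gamma_{n+2}A+\gamma_{n+1}\gamma_2A$ gets stuck when $n+1$ is even), and nothing in your proposal states, proves, or correctly invokes them; without them part~(2) is unproven, while with them it is a few lines on top of part~(1).
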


The solvability of restricted enveloping algebras in case of characteristic 2 was settled in~\cite{SiUs13}.
Lie nilpotence, solvability, and other non-matrix identities for (restricted) enveloping algebras of (restricted)
Lie (super)algebras are studied in~\cite{BRU,Pe92,Sic07,Sic11,SicSpi06,SiUs13.2,SiUs15.2,Use13,Use13.2}.
For other results on derived lengths, Lie nilpotency classes for $u(L)$, or identities for skew and symmetric elements
of $u(L)$, etc., see the survey~\cite{SiUs15}.

More general cases of (restricted) enveloping algebras for (color) Lie $p$-(super)algebras are treated in~\cite{BMPZ}.
Further developments have been obtained for smash products  $U(L)\#K[G]$ and $u(L)\#K[G]$,
where a group $G$ acts by automorphisms on a (restricted) Lie algebra $L$~\cite{BaPe02}.
Identities of smash products $U(L)\#K[G]$, where $L$ is a Lie superalgebra in characteristic zero were studied
by Kotchetov~\cite{Koc03}.
The Lie structure of smash products has been investigated in~\cite{SicUse17}.
The results on identities of smash products are of interest because they combine, as particular cases,
both, the results on identities of group ring and enveloping algebras.

\section{Poisson algebras and their identities}
\label{SPoisson}
\subsection{Poisson algebras}
Poisson algebras naturally appear in different areas of algebra, topology and physics.
Probably Poisson algebras were first introduced in 1967 by Berezin~\cite{Ber67}, see also Vergne~\cite{Ver69} (1969).
Poisson algebras are used to study universal enveloping algebras of
finite dimensional Lie algebras in characteristic zero~\cite{Kos81,Ooms12}.
In particular, abelian subalgebras in symmetric Poisson algebras are used to study
commutative subalgebras in universal enveloping algebras of finite-dimensional semisimple Lie algebras
in characteristic zero~\cite{Tar00,Vin90}.
Applying Poisson algebras, Shestakov and Umirbaev managed to solve a long-standing problem:
they proved that the Nagata automorphism of the polynomial ring in three variables $\mathbb{C}[x,y,z]$ is wild~\cite{SheUmi04}.
Related algebraic properties of free Poisson algebras were studied by
Makar-Limanov, Shestakov and Umirbaev~\cite{MakShe12,MakUmi11}.

The free Poisson algebras were defined by Shestakov~\cite{Shestakov93}.
A basic theory of identical relations for Poisson algebras  was developed by Farkas~\cite{Farkas98,Farkas99}.
See further developments on the theory of identical relations of Poisson algebras,
in particular, the theory of so called  codimension growth in characteristic zero
by Mishchenko, Petrogradsky, and Regev~\cite{MiPeRe}, and Ratseev~\cite{Rats14}.
\medskip

Recall that a vector space $A$ is a {\it Poisson algebra} provided that, beside the addition,
$A$ has two $K$-bilinear operations which are related by the Leibnitz rule. More precisely,
\begin{itemize}
\item
A is a commutative associative algebra with unit whose multiplication is denoted by $a\cdot b$ (or $ab$), where $a, b\in A$;
\item
$A$ is a Lie algebra whose product is traditionally denoted by the Poisson bracket $\{a, b\}$, where $a, b\in A$;
\item these two operations are related by the Leibnitz rule
\begin{equation*}
\{a\cdot b, c\}=a\cdot\{b, c\}+b\cdot\{a, c\},\qquad  a, b, c \in A.
\end{equation*}
\end{itemize}
\subsection{Examples of Poisson algebras}
Typical examples are as follows.
\medskip

\noindent {\bf Example 1.}\  Consider the polynomial ring
$\H_{2m}=K[X_1,\dots,X_m,Y_1,\dots,Y_m]$.
Set $\{X_i,Y_j\}=\delta_{i,j}$ and extend this bracket by the Leibnitz rule. We obtain the Poisson bracket:
$$\{f,g\}=\sum_{i=1}^m
\bigg(\frac{\partial f}{\partial X_i}\;\frac{\partial g}{\partial Y_i}-
\frac{\partial f}{\partial Y_i}\;\frac{\partial g}{\partial X_i}\bigg),
\qquad f,g\in \H_{2m}.
$$
The commutative product is the natural multiplication.
We obtain the {\it Hamiltonian} Poisson algebra~$\H_{2m}$.
\medskip

\noindent {\bf Example 2.}\
Let $L$ be a Lie algebra over an arbitrary field $K$,
$\{U_n\mid n\ge 0\}$ the natural filtration of its universal enveloping algebra $U(L)$.
Consider the {\it symmetric algebra}
$S(L)=\gr U(L)=\mathop{\oplus}\limits_{n=0}^\infty U_{n}/U_{n+1}$ (see~\cite{Dixmier}).
Recall that $S(L)$ is identified with the polynomial ring $K[v_i\,|\, i\in I]$, where $\{v_i\,|\, i\in I\}$ is a $K$-basis of $L$.
Define the Poisson bracket as follows.
Set $\{v_i,v_j\}=[v_i,v_j]$ for all $i,j\in I$,
and extend to the whole of $S(L)$ by linearity and using the Leibnitz rule. For example,
$$\{v_i\cdot v_j,v_k\}=v_i\cdot\{v_j,v_k\}+v_j\cdot\{v_i,v_k\},\qquad i,j,k\in I.$$
Thus, $S(L)$ has a structure of a Poisson algebra, called the {\it symmetric algebra} of $L$.
\medskip

\noindent {\bf Example 3.}\ Let $L$ be a Lie algebra with a $K$-basis $\{v_i\,|\, i\in I\}$, where $\ch K=p>0$.
Consider a factor algebra of the symmetric (Poisson) algebra
$$\mathbf{s}(L)=S(L)/(v^p\, |\, v\in L)\cong K[v_i\mid i\in I]/(v_i^p \,|\, i\in I),$$
we get an algebra of truncated polynomials. Observe that
$$\{v^p,u\}=pv^{p-1}\{v,u\}=0,\qquad v\in L,\ u\in \SSS(L).$$
So, the Poisson bracket on $S(L)$ yields a Poisson bracket on $\SSS(L)$.
Thus, $\SSS(L)$ is a Poisson algebra, we call it a {\it truncated symmetric algebra}.
Remark that the Lie algebra $L$ need not be restricted.

\medskip
\textbf{Example 4.}
Let $K$ be a field of positive characteristic $p$.
We introduce the {\it truncated Hamiltonian} Poisson  algebra as
$$\mathbf{h}_{2m}(K)=K[X_1,\dots,X_m,Y_1,\dots,Y_m]/(X_i^p,Y_i^p\mid  i=1,\dots,m),$$
where we define the bracket as in Example~1 using the observation of Example~3.
\medskip

The Hamiltonian algebras $\mathbf{h}_{2m}(K)$ and $\mathbf{H}_{2m}(K)$ in the class of Poisson algebras
play a role  similar to that of the matrix algebras $\mathrm{M}_n(K)$ for associative algebras.

\subsection{Poisson identities}
The objective of this subsection is to supply basic facts
on polynomial identities of Poisson algebras.

Consider the free Lie algebra $L=L(X)$ generated by a set $X$ and its symmetric algebra $F(X)=S(L(X))$.
Then, $F(X)$ is a {\it free Poisson algebra} in $X$, as was shown by Shestakov~\cite{Shestakov93}.
For example, let $L=L(x,y)$ be the free Lie algebra of rank 2. Consider its Hall basis~\cite{Ba}
$$L=\langle x, y, [y,x], [[y,x],x], [[y,x],y], [[[y,x],x],x],\ldots\rangle_K. $$
We obtain the free Poisson algebra $F(x,y)=S(L)$ of rank 2,
which has a canonical basis as follows:
$$
F(x,y)=\big\langle x^{n_1} y^{n_2} \{y,x\}^{n_3} \{\{y,x\},x\}^{n_4} \{\{y,x\},y\}^{n_5} \{\{\{y,x\},x\},x\}^{n_6}\cdots\,\Big|\, n_i\ge 0
\big\rangle_K,
$$
where only finitely many $n_i$, $i\ge 1$, are non-zero in the monomials above.
\medskip

A definition of  a {\it Poisson PI-algebra} is standard,
identities being elements of the free Poisson algebra  $F(X)$ of countable rank.
We assume that basic facts on identical relations of linear algebras are known to the reader
(see, e.g.,~\cite{Ba,Drensky}).
Farkas introduced so called {\it customary identities}~\cite{Farkas98}:
$$
\sum_{\substack{\sigma\in S_{2n}\\ \sigma(2k-1)<\sigma(2k),\ k=1,\dots,n\\\sigma(1)<\sigma(3)<\cdots<\sigma(2n-1)}}
\!\!\!\!\!\!\!\!\!\!\!\!\!
\mu_{\sigma} \{x_{\sigma (1)},x_{\sigma (2)}\}\cdots
\{x_{\sigma (2n-1)},x_{\sigma (2n)}\}\equiv 0,\quad \mu_\sigma\in K.
$$
where $\mu_e=1$, for the identity permutation.
Denote by $T_{2n}$ the set of permutations $\tau\in S_{2n}$ appearing in the some above.
The importance of customary identities is explained by the following fact.

\begin{Th}[\cite{Farkas98}]\label{TFarkas0}
Suppose that $\V$ is a nontrivial variety of Poisson algebras over
a field $K$ of characteristic zero.
Then $\V$ satisfies a nontrivial customary identity.
\end{Th}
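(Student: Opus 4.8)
The plan is to reduce an arbitrary nontrivial identity of $\V$ to the customary shape in three movements: pass to a multilinear identity, strip away every ``bracket-free'' occurrence of a variable by means of the unit, and finally break the long Lie brackets into products of pairs. Since $\ch K=0$, standard linearization shows the $T$-ideal of $\V$ is generated by its multilinear elements, so $\V$ satisfies a nonzero multilinear identity $f(x_1,\dots,x_n)\equiv0$; I choose one of least degree $n$. In the canonical basis of the free Poisson algebra $S(L(X))$ every multilinear monomial is a commutative product $w_1\cdots w_k$ of left-normed Lie words $w_j$ whose variables partition $\{x_1,\dots,x_n\}$; call a variable \emph{bare} in a monomial if it is a factor $w_j$ of length $1$. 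The first tool is the relation $\{1,a\}=0$, valid in every unital Poisson algebra (from $\{1\cdot1,a\}=2\{1,a\}$): by induction on the length it yields that any Lie word of length $\ge2$ vanishes as soon as one of its arguments is specialised to $1$. Hence the Poisson endomorphism $x_i\mapsto1$, $x_j\mapsto x_j$ for $j\ne i$ annihilates every monomial in which $x_i$ is bracketed and merely deletes the factor $x_i$ from the monomials in which $x_i$ is bare.

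Applying this to $f$, the image $f|_{x_i=1}$ is again an identity, now of degree $n-1$, so by minimality it is the zero element; since deleting a bare factor is injective on distinct monomials, every monomial of $f$ in which $x_i$ is bare must already have had coefficient $0$. Therefore no variable is bare: each monomial of $f$ is a product of Lie words of length $\ge2$, whence the number of factors satisfies $k\le n/2$. It remains to force every factor to have length exactly $2$. For this I use the Leibniz rule through the substitution $x_c\mapsto uv$ with fresh $u,v$: expanding a Lie word that contains $x_c$ deep inside a bracket produces, among its terms, one in which the innermost bracket detaches as a separate length-$2$ factor, e.g. $\{\{uv,x_b\},x_c\}$ contributes $\{v,x_b\}\{u,x_c\}$. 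Iterating such splittings rewrites any factor of length $\ge3$ as combinations of products of strictly shorter brackets (times bare variables, which the first tool then removes), at the expense of raising the degree.

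To isolate the customary contribution I grade $S(L(X))$ by the number $k$ of commutative factors; the product is degree-additive while the bracket lowers $k$ by one, so rescaling the Poisson bracket by a scalar $s$ multiplies the component with $k$ factors by $s^{\,n-k}$. Working over $K[s]$ (equivalently, in a polynomial extension $\mathcal{A}[s]$ of a test algebra) and using that $K$ is infinite, a Vandermonde argument shows that each homogeneous component of an identity is again an identity; the component with the maximal number $k=n/2$ of factors, having no bare variables and all factors of length $\ge2$, is then forced to consist solely of products of $n/2$ length-$2$ brackets, that is, it is \emph{customary}. Finally one relabels the variables so that the monomial $\{x_1,x_2\}\cdots\{x_{2m-1},x_{2m}\}$ occurs with nonzero coefficient and normalises it to $\mu_e=1$, producing the required customary identity. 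The main obstacle is exactly this extraction step: one must justify the bracket-number grading on $T$-ideals rigorously in the unital setting and, above all, guarantee that after the splitting process the top, all-pairs component does not cancel to zero, so that a genuine nonzero, suitably normalised customary identity survives.
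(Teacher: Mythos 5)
Your opening moves are sound and agree with the first steps of Farkas's actual argument (the paper itself only displays the customarization of $\{\{X,Y\},Z\}$ and defers the full proof to~\cite{Farkas98}): in characteristic zero one may take a nonzero multilinear identity $f$ of minimal degree, and the substitution $x_i\mapsto 1$, together with $\{1,a\}=0$ and the injectivity of deleting a bare factor, correctly shows that no variable occurs bare in $f$. The trouble starts with the splitting step. Expanding by Leibniz gives
$f|_{x_c=uv}=u\,f|_{x_c=v}+v\,f|_{x_c=u}+(\text{proper splits})$,
and your proposed way of discarding the unwanted terms --- ``bare variables, which the first tool then removes'' --- is backwards: setting $u=1$ kills every Lie word of length $\ge 2$ containing $u$, i.e.\ it kills precisely the split (customary-direction) terms and \emph{keeps} the bare-$u$ terms; moreover minimality of degree can no longer be invoked, since the substitution has raised the degree. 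The correct device is the customarization operator $f\mapsto f|_{x_c=uv}-u\,f|_{x_c=v}-v\,f|_{x_c=u}$, which stays in the T-ideal because T-ideals are closed under substitutions and under multiplication by arbitrary elements; this is exactly the computation the paper exhibits.

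The second defect is fatal, and you name it yourself without repairing it. Rescaling the Poisson bracket by $s$ is not an endomorphism of a Poisson algebra, and a variety $\V$ need not be closed under passing from $(A,\cdot,\{\,,\,\})$ to $(A,\cdot,s\{\,,\,\})$, because its defining identities may mix bracket degrees; so evaluating $f$ in the rescaled algebra is evaluating it in an algebra about which you know nothing, and the Vandermonde argument never gets started --- you only know vanishing at $s=1$, which is the original hypothesis. Hence ``each homogeneous component of an identity is again an identity'' is unproven, and it is essentially of the same depth as the theorem itself. Even granting the splitting procedure, nothing in the proposal shows the final customary polynomial is nonzero: the customarization operator annihilates every monomial in which the chosen variable already sits in a $2$-bracket, so iterating it can perfectly well return $0$, and cancellation among the split images of different monomials is exactly the danger. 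Establishing that a nonzero customary identity survives --- in Farkas's proof this is done by tracking a carefully chosen extremal monomial through the customarization operators --- is the real content of Theorem~\ref{TFarkas0}. As it stands, your argument shows only that customary \emph{consequences} exist, not that a nontrivial one does.
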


Let us show the idea of the proof.
Let a Poisson algebra $R$ satisfy the identity $f(X,Y,Z)=\{\{X,Y\},Z\}\equiv 0$.
Then, $R$ also satisfies the identity:
\begin{align*}
0&\equiv f(X_1X_2,Y,Z)-X_1f(X_2,Y,Z)-X_2f(X_1,Y,Z)\\
&=\{\{X_1X_2,Y\},Z\}- X_1\{\{X_2,Y\},Z\}- X_2\{\{X_1,Y\},Z\}\\
&=\{X_1,Y\}\{X_2,Z\}+\{X_1,Z\}\{X_2,Y\},
\end{align*}
which is customary.
Farkas called this process a {\it customarization}~\cite{Farkas98}, it is an analogue
of the linearization process for associative algebras.
The arguments of \cite{Farkas98} actually prove the following.
\begin{Th}[\cite{Farkas98}]\label{TFarkas}
Suppose that a Poisson algebra $A$ over an arbitrary field
satisfies a nontrivial {\bf multilinear} Poisson identity.
Then $A$ satisfies a nontrivial customary identity.
\end{Th}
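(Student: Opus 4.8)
The plan is to turn the \emph{customarization} trick displayed above into a systematic reduction, and then to check that the customary identity it produces is nonzero. I work inside the free Poisson algebra $F(X)=S(L(X))$, where the hypothesised nontrivial multilinear identity of $A$ is a nonzero multilinear element $f$, say in the variables $x_1,\dots,x_m$, belonging to the ideal of identities of $A$. Using the canonical basis of $F(X)$ recalled above, I expand $f$ as a linear combination of products $w_1\cdots w_k$ of Lie monomials (iterated brackets) in which the variables $x_1,\dots,x_m$ are partitioned among the factors. To each such basis monomial I attach its \emph{type}, the partition of $m$ recording the bracket-lengths of the factors $w_1,\dots,w_k$; the customary monomials are exactly those of type $(2,2,\dots,2)$.

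Next I would isolate the elementary moves that produce a new identity of $A$ from $f$ and push a monomial toward customary type. The basic one is the product substitution used in the example: replace a variable $x_i$ by a product $x_i'x_i''$ and subtract $x_i'\,f(\dots,x_i'',\dots)+x_i''\,f(\dots,x_i',\dots)$. By the Leibniz rule this splits the bracket containing $x_i$, so a factor of length $\ell\ge 2$ is traded for factors whose lengths sum to $\ell+1$ and are each strictly below $\ell$; iterating drives every bracket down toward length $2$. A short, length-$1$ factor (a bare variable) is handled by substituting that variable with a bracket $\{x',x''\}$ of two fresh variables, which by multilinearity affects only that factor. Each move manifestly keeps us inside the ideal of identities of $A$.

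I would then run the reduction by induction. Fix a monomial order refining the type (so that being closer to all-length-$2$ is smaller), let $m_0$ be the leading monomial of $f$, and apply to it whichever move lowers its complexity; the goal is to show the new identity has a strictly smaller leading monomial, so that after finitely many steps every monomial is customary. A genuine subtlety here is that these substitutions act on all monomials of $f$ at once, and a move tailored to $m_0$ can temporarily lengthen a bracket in another term, so the bookkeeping must be organised around the leading monomial under a well-order rather than a naive global defect.

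The hard part will be guaranteeing nontriviality over an \emph{arbitrary} field, where the integer coefficients generated by repeated Leibniz expansions may be divisible by $\ch K$. To control this I would fix the monomial order, follow a single distinguished customary monomial $\widehat{m}_0$ obtained from $m_0$ by a \emph{unique} chain of bracket-splittings, and argue that its coefficient is a product of the $\pm 1$ structure constants coming from individual Leibniz applications, hence a unit in $K$. One then checks that no other monomial of $f$, and no subtracted term, can contribute to $\widehat{m}_0$, so its coefficient survives every cancellation. Since that coefficient is a unit in $K$, the customary identity obtained is nonzero and, after normalising, nontrivial; securing this uniqueness and unit-coefficient claim, independently of $\ch K$, is where essentially all the work lies.
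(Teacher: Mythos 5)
Your plan follows the same customarization idea that the paper sketches, but it has a genuine gap, and it is the one you yourself flag at the end: the nonvanishing of the resulting customary identity over an arbitrary field is not an implementation detail to be secured later --- it is the entire content of the theorem, and the mechanism you sketch for it would fail. The obstruction is structural, not arithmetical. The customarization operator $D_i\colon f\mapsto f(x_i'x_i'')-x_i'f(\ldots,x_i'',\ldots)-x_i''f(\ldots,x_i',\ldots)$ is far from injective: by the Leibnitz rule $\{x_i'x_i'',u\}=x_i'\{x_i'',u\}+x_i''\{x_i',u\}$ for \emph{any} $u$, so $D_i$ annihilates every monomial whose factor containing $x_i$ has the form $\{x_i,u\}$ with $u$ a Lie monomial --- not only the $2$-brackets $\{x_i,x_j\}$ but also $\{x_i,\{y,z\}\}$, etc. Already in the paper's example $f=\{\{X,Y\},Z\}$ one has $D_Zf=0$; only customarizing in the inner variable $X$ gives something nonzero. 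Hence a move ``tailored to the leading monomial $m_0$'' can kill the contribution of $m_0$ outright, and no bookkeeping of $\pm1$ coefficients along a ``unique chain of splittings'' recovers it. Moreover, other monomials genuinely do contribute to the same target monomials: the coefficient of a target basis monomial of $D_if$ is a sum of coefficients of $f$ over \emph{shuffles}, so your claim that ``no other monomial of $f$ can contribute to $\widehat{m}_0$'' is false in general. What a correct argument needs here (and what Farkas's proof supplies, in a characteristic-free way) is a description of $\ker D_i$: a monomial without singleton factors is killed by $D_i$ exactly when its $x_i$-factor lies in $\{x_i,L\}$; combined with the fact that a factor of length $\geq 3$ cannot lie in $\{x_i,L\}$ for all of its variables simultaneously (look at the first and last letters of its associative words), this shows that either $f$ is already customary or some $D_if\neq 0$. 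Since $D_i$ strictly decreases the excess $\sum_j(|w_j|-2)$ of every surviving monomial, induction on the excess then terminates. None of this is in your outline, and the well-order on leading monomials does not substitute for it.

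A second defect concerns singleton factors. Your move $x_i\mapsto\{x',x''\}$ does not ``affect only that factor'': by multilinearity every monomial of $f$ contains $x_i$, and in the monomials where $x_i$ sits inside a bracket the substitution lengthens that bracket, pushing them away from customary type; this conflicts with your termination scheme, and you would also need (but do not prove) that the substitution is injective on multilinear components, so that the new identity is still nonzero. The clean device uses the unit of the Poisson algebra: substituting $x_i=1$ kills every monomial in which $x_i$ occurs inside a bracket and strips $x_i$ from the monomials where it is a singleton factor; setting a maximal set of simultaneously-singleton variables equal to $1$ therefore yields a nonzero multilinear identity with no singleton factors before any customarization starts. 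With that preprocessing, plus the kernel analysis above, the induction closes; without them, the proposal remains a plan whose decisive step is both unproved and, as formulated, incorrect.
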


\subsection*{Remark}
Let us explain why we {\it need all polynomials to be multilinear in case of positive characteristic} $p$.
The linearization process is simply not working for Poisson algebras in positive characteristic
as it does for associative and Lie algebras.
For example, the Poisson identity $\{x,y\}^p\equiv 0$ is given by a nonzero element of
the free Poisson algebra $F(X)$.
Observe that its full linearization is trivial:
$$\sum_{\sigma,\pi\in S_p }\{x_{\sigma(1)},y_{\pi(1)}\}\cdots \{x_{\sigma(p)},y_{\pi(p)}\}=
p!\sum_{\pi\in S_p }\{x_{1},y_{\pi(1)}\}\cdots \{x_{p},y_{\pi(p)}\}=0.
$$
Moreover, let us check that any truncated symmetric algebra $\SSS (L)$
satisfies the identity $\{x,y\}^p\equiv 0$. Indeed, let $a,b\in
\SSS (L)$, then $\{a,b\}$ is a truncated polynomial without constant
term, its $p$th power is zero by the Frobenius rule
$(v+w)^p=v^p+w^p$. Thus, it does not make sense to study nonlinear
Poisson identities for truncated symmetric algebras.
\medskip

In the theory of Poisson PI-algebras, the analogue of the standard polynomial is (\cite{Farkas98},~\cite{Farkas99}):
$$ \St_{2n}=\St_{2n}(x_1,\dots,x_{2n})=\sum_{\sigma\in T_{2n}}(-1)^\sigma \{x_{\sigma (1)},x_{\sigma (2)}\}\cdots
\{x_{\sigma (2n-1)},x_{\sigma (2n)}\}. $$
This is a customary polynomial, skewsymmetric in all variables \cite{Farkas98}.
One has the following fact similar to the theory of associative algebras.

\begin{Th}[\cite{MiPeRe}]
In case of zero characteristic, any Poisson PI-algebra satisfies an identity $(\St_{2n})^m\equiv 0$, for some integers $n,m$.
\end{Th}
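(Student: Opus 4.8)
\section*{Proof proposal}

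The plan is to transport the classical associative argument that every PI\nobreakdash-algebra satisfies a power of a standard polynomial, with the Hamiltonian algebras $\H_{2r}$ playing the role of the matrix algebras and a commutative-associative ``radical'' playing the role of the Jacobson radical. Since $A$ is a Poisson PI\nobreakdash-algebra over a field of characteristic zero, I first linearize: every identity is equivalent to its multilinear components, so $A$ satisfies a nontrivial \emph{multilinear} identity, and by Theorem~\ref{TFarkas} (customarization) it in fact satisfies a nontrivial customary identity. It therefore suffices to locate integers $n,m$ for which the specific customary polynomial $(\St_{2n})^{m}$ lies in $\mathrm{Id}(A)$, where the exponent refers to the commutative-associative product.

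The first substantive input is an Amitsur--Levitzki-type vanishing for the ``simple'' blocks. Because $\St_{2n}$ is the totally skew-symmetric customary polynomial in $2n$ arguments \cite{Farkas98}, and the bracket of $\H_{2r}$ is built from a symplectic form of rank $2r$, plugging in more than $2r$ directions must collapse by skew-symmetry; thus $\St_{2n}\equiv 0$ on $\H_{2r}$ as soon as $2n>2r$. This is the Poisson counterpart of the statement that a standard polynomial of large degree vanishes on a fixed matrix algebra, and it is exactly the sense in which, as noted after Example~4, the algebras $\H_{2r}$ behave like the $\mathrm{M}_{n}(K)$.

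The decisive step is a structure/codimension theorem for characteristic-zero Poisson PI\nobreakdash-varieties, which I expect to be the main obstacle and which is the content extracted in \cite{MiPeRe}. The goal is to show that $\V=\mathrm{var}(A)$ is generated by a Poisson algebra $B$ possessing a Poisson ideal $N$ that is nilpotent for the commutative product, say $N^{m}=0$, with the quotient $B/N$ embeddable into a finite product of Hamiltonian-type blocks $\H_{2r_{1}}\times\cdots\times\H_{2r_{t}}$. Equivalently, in cocharacter language one establishes an Amitsur--Regev--Kemer hook theorem: a single nontrivial customary identity forces the customary cocharacter of $A$ to be supported on a fixed hook $H(k,l)$, bounding both the number of long rows and of long columns of the admissible Young diagrams. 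The genuine difficulty here, absent in the associative theory, is that there is no single associative product to absorb substitutions: the commutative multiplication and the bracket are intertwined by the Leibniz rule, so controlling codimension growth requires propagating skew-symmetry across a rectangle through this intertwining rather than by a direct substitution argument.

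Granting such a $B$, the conclusion is immediate and pins down $n,m$. Choose $n$ with $2n>2\max_{i}r_{i}$, so that by the vanishing of the previous paragraph $\St_{2n}\equiv 0$ on $B/N$; since $\St_{2n}$ is multilinear, every value $\St_{2n}(b_{1},\dots,b_{2n})$ then lands in $N$. Taking the commutative product of $m$ such values gives $(\St_{2n})^{m}(B)\subseteq N^{m}=0$, whence $(\St_{2n})^{m}\in\mathrm{Id}(B)=\mathrm{Id}(A)$, with $m$ the nilpotency degree of $N$ and $n$ governed by the largest Hamiltonian rank (equivalently, by the row-bound of the hook). I anticipate that the Amitsur--Levitzki vanishing on $\H_{2r}$ and the translation between $(m,n)$ and the hook parameters $(k,l)$ are routine bookkeeping, while the real weight of the proof sits in the structure/codimension theorem that produces the commutatively nilpotent ideal $N$ with Hamiltonian quotient.
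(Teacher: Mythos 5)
Your opening moves are sound: in characteristic zero one linearizes and then customarizes via Theorem~\ref{TFarkas}, and the vanishing of $\St_{2n}$ on Hamiltonian blocks is a true and citable fact --- it is Kostant's theorem (Theorem~\ref{TKost}), since $\H_{2r}$ is a Poisson quotient of $S(\mathfrak h_r)$ for the Heisenberg algebra $\mathfrak h_r$, whose maximal coadjoint orbits have dimension $2r$. (Note, though, that your justification ``skew-symmetry must collapse beyond the rank of the symplectic form'' is not a proof: the arguments of $\St_{2n}$ range over the \emph{infinite-dimensional} space $\H_{2r}$, so alternation in more than $2r$ arguments forces nothing by itself; making that intuition rigorous is exactly the content of \cite{Kos81,Farkas99}.)

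The genuine gap is your ``decisive step''. The structure theorem you postulate --- that $\mathrm{var}(A)$ is generated by a Poisson algebra $B$ with a Poisson ideal $N$, nilpotent for the commutative product, such that $B/N$ embeds into a finite product of Hamiltonian blocks --- is not proved in \cite{MiPeRe} or anywhere else; it would amount to a Poisson analogue of Wedderburn--Kemer representability theory, which does not exist. Your claim that this statement is ``equivalent'' to a hook bound on the customary cocharacters is also unjustified: already in the associative theory, passing from hook-shaped cocharacters to a structural decomposition of this kind is Kemer's deep theory, not a reformulation. What \cite{MiPeRe} actually proves --- and what the survey's one-line indication (``an analogue of Regev's theorem on codimension growth'') refers to --- is the representation-theoretic route: a nontrivial customary identity forces an exponential bound on the customary codimensions, hence hook containment of the customary cocharacters; then an Amitsur--Regev-type argument shows that a product of $m$ standard polynomials $\St_{2n}$ in \emph{disjoint} alternating sets of variables generates an $S_N$-module all of whose irreducible constituents lie outside the hook, so that product is an identity, and identifying variables (permissible in characteristic zero) yields $(\St_{2n})^m\equiv 0$. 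Once you replace your structure theorem by the hook theorem that is actually available, your concluding step collapses --- there is no ideal $N$ for the values of $\St_{2n}$ to land in --- and the step you dismiss as ``routine bookkeeping'' (getting from the hook to the power of the standard identity) is precisely the second substantive theorem of \cite{MiPeRe}. So the skeleton is reasonable, but the heart of the proof is missing: the theorem you place at the center is neither available nor equivalent to what is available, and the deduction you run from it does not transfer.
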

This result was proved by establishing an analogue of Regev's theorem on codimension growth.
Moreover, it was proved that so called {\it customary codimension growth} is exponential
with an integer exponent~\cite{MiPeRe}.

Another important fact on the standard identity is as follows.
\begin{Lemma}[\cite{Farkas98}]
Let $A$ be a Poisson algebra over an arbitrary field $K$ and $A$ is $k$-generated as an associative algebra.
Then it satisfies the standard identity
$\St_{2m}\equiv 0$,
whenever $2m>k$.
\end{Lemma}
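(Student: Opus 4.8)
The plan is to exploit three features of the standard Poisson polynomial $\St_{2m}$ at once: it is multilinear, it is alternating (it vanishes whenever two of its arguments coincide), and --- the feature I would isolate first --- it is a \emph{derivation in each of its $2m$ arguments}. This last property is immediate from the shape of $\St_{2m}$: in $\St_{2m}(x_1,\dots,x_{2m})$ every variable $x_l$ occurs exactly once, and only inside a single Poisson bracket, so replacing $x_l$ by a product $uv$ and applying the Leibnitz rule inside that one bracket gives
$$
\St_{2m}(\ldots, uv, \ldots)=u\,\St_{2m}(\ldots, v, \ldots)+v\,\St_{2m}(\ldots, u, \ldots),
$$
the factored $u$ and $v$ coming out of every term simultaneously. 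Thus, for fixed values of the remaining arguments, each slot of $\St_{2m}$ is an ordinary derivation $A\to A$.

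Next I would use that $A$ is generated as an associative algebra by $k$ elements $a_1,\dots,a_k$. Any derivation $D\colon A\to M$ into an $A$-module satisfies $D(A)\subseteq\sum_{i=1}^k A\cdot D(a_i)$, since $D$ of a monomial in the $a_i$ expands, by Leibnitz, into an $A$-combination of the $D(a_i)$, and $D$ is additive. Applying this to the derivation obtained by freezing all but the first argument, then to the second, and so on through all $2m$ slots, I would conclude that $\St_{2m}(v_1,\dots,v_{2m})$ is an $A$-linear combination of the pure-generator values $\St_{2m}(a_{i_1},\dots,a_{i_{2m}})$ with every $i_l\in\{1,\dots,k\}$.

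The conclusion is then a pigeonhole count. Since $2m>k$, every index tuple $(i_1,\dots,i_{2m})$ must repeat some generator, so in each surviving term two arguments of $\St_{2m}$ are literally equal; by the alternating property each such value vanishes, whence $\St_{2m}(v_1,\dots,v_{2m})=0$ for all $v_1,\dots,v_{2m}\in A$. As these were arbitrary, $A$ satisfies $\St_{2m}\equiv 0$.

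The main obstacle I anticipate lies in the two structural inputs rather than the counting. First, one must check carefully that the Leibnitz expansion factors termwise, so that $\St_{2m}$ really is a derivation in each slot; this relies on reading off from the definition of $T_{2m}$ that each variable occurs exactly once and exclusively within one bracket. Second, and more delicate, one must secure the alternating property over an \emph{arbitrary} field, in particular in characteristic $2$, where skew-symmetry alone yields only $2\,\St_{2m}=0$ on coincident arguments. I would therefore verify directly from the bracket structure --- using $\{x,x\}=0$ and pairing the terms that place two equal arguments in distinct brackets --- that $\St_{2m}$ genuinely vanishes on repeated arguments in all characteristics; this is precisely the point where the $k$-generation hypothesis forces the vanishing once the degree exceeds $k$.
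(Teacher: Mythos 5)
Your proof is correct. Note that the survey states this lemma without proof (it is quoted from Farkas's paper), so there is no in-paper argument to compare against; your argument is in fact the standard one behind the cited result: each slot of $\St_{2m}$ is a derivation because every variable sits in exactly one Poisson bracket of each term and the associative product is commutative, so values on arbitrary elements reduce to $A$-linear combinations of values on the $k$ generators, where the pigeonhole principle plus the alternating property finish the job. Your care on the two delicate points is also well placed: the derivation property does require pulling the factors $u,v$ out of every term simultaneously (legitimate by commutativity), and the alternating property must be checked directly rather than deduced from skew-symmetry --- terms whose matching puts the two equal arguments in one bracket die by $\{x,x\}=0$, while the remaining terms cancel in pairs under the involution swapping the two equal arguments, with the sign bookkeeping (transposition flips the permutation sign; renormalizing a pair flips both the permutation sign and the bracket sign) working over any field, including characteristic $2$.
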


\section{Multilinear identities of symmetric Poisson algebras}

The following result is an analogue of the classical Amitsur-Levitzki theorem on identities of matrix algebras.
Kostant used another terminology,
but as observed by Farkas~\cite{Farkas99}, this is a result on identities of symmetric Poisson algebras.

\begin{Th} [\cite{Kos81,Farkas99}]\label{TKost}
Let $L$ be a finite dimensional Lie algebra over a field of characteristic zero.
The symmetric algebra $S(L)$ satisfies the standard Poisson identity $\St_ {2d}\equiv 0$
as soon as $2d$ exceeds the dimension of a maximal coadjoint orbit of $L$.
\end{Th}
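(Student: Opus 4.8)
The plan is to realize $S(L)$ geometrically as the algebra $K[L^*]$ of polynomial functions on the dual space $L^* = \mathrm{Hom}_K(L,K)$, carrying the classical Lie--Poisson (Kirillov--Kostant--Souriau) bracket, and then to recognize $\St_{2d}$, after evaluation at a point, as a Pfaffian whose rank is governed by the coadjoint orbit dimension. First I would fix the identification $S(L)\cong K[L^*]$. Under it the bracket becomes $\{f,g\}(\xi)=\xi([d_\xi f,d_\xi g])$ for $f,g\in S(L)$ and $\xi\in L^*$, where the differential $d_\xi f$ is regarded as an element of $(L^*)^*=L$ (this is where finite dimensionality of $L$ enters). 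For linear functions $x,y\in L$ this reduces to $\{x,y\}(\xi)=\xi([x,y])$, consistent with the defining relations of $S(L)$. Since $K$ has characteristic zero, hence is infinite, a polynomial in $S(L)$ that vanishes at every $\xi\in L^*$ is zero; thus it suffices to show $\St_{2d}(f_1,\dots,f_{2d})(\xi)=0$ for all $f_1,\dots,f_{2d}\in S(L)$ and all $\xi$.

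The key step is to evaluate $\St_{2d}(f_1,\dots,f_{2d})$ at a fixed $\xi$ and identify the outcome as a Pfaffian. Form the skew-symmetric $2d\times 2d$ matrix $M(\xi)=(\{f_i,f_j\}(\xi))_{i,j}$. Because the commutative product of $S(L)$ becomes ordinary multiplication of scalars after evaluation at $\xi$, the value $\St_{2d}(f_1,\dots,f_{2d})(\xi)$ is exactly the signed sum over $T_{2d}$ of the products $\prod_k\{f_{\sigma(2k-1)},f_{\sigma(2k)}\}(\xi)=\prod_k M(\xi)_{\sigma(2k-1),\sigma(2k)}$. The constraints defining $T_{2d}$, namely $\sigma(2k-1)<\sigma(2k)$ together with $\sigma(1)<\sigma(3)<\cdots<\sigma(2d-1)$, select precisely one representative from each perfect matching of $\{1,\dots,2d\}$, and $(-1)^\sigma$ is the corresponding Pfaffian sign; hence $\St_{2d}(f_1,\dots,f_{2d})(\xi)=\mathrm{Pf}\,M(\xi)$.

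It then remains to bound the rank of $M(\xi)$. Writing $B_\xi$ for the alternating form $B_\xi(x,y)=\xi([x,y])$ on $L$, the chain rule gives $M(\xi)_{ij}=B_\xi(d_\xi f_i,d_\xi f_j)$, so $M(\xi)$ is the pullback of $B_\xi$ along the linear map $i\mapsto d_\xi f_i$; in particular $\mathrm{rank}\,M(\xi)\le\mathrm{rank}\,B_\xi$. The radical of $B_\xi$ is the stabilizer $\{x\in L:\xi([x,L])=0\}$ of $\xi$ under the coadjoint action, so $\mathrm{rank}\,B_\xi$ equals the dimension of the coadjoint orbit through $\xi$; over an arbitrary field of characteristic zero one simply reads the ``dimension of a maximal coadjoint orbit'' as the generic, i.e. maximal, value of $\mathrm{rank}\,B_\xi$. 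By hypothesis $2d$ exceeds this maximum, whence $\mathrm{rank}\,M(\xi)<2d$ for every $\xi$. A skew-symmetric matrix of size $2d$ and rank $<2d$ is singular, so $\mathrm{Pf}\,M(\xi)=0$. Combining the three steps, $\St_{2d}(f_1,\dots,f_{2d})$ vanishes at every point and therefore identically, which is the assertion.

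The part I expect to be delicate is the second step: \emph{verifying term-by-term} that the summation range $T_{2d}$ with the signs $(-1)^\sigma$ reproduces exactly the Pfaffian, rather than some other signed matching sum. This requires matching the combinatorial normalization of the standard Poisson polynomial against the definition of the Pfaffian (a low-rank check such as $2d=4$ fixes the conventions). Once the identity $\St_{2d}(\cdots)(\xi)=\mathrm{Pf}\,M(\xi)$ is secured, the rank estimate, the vanishing of Pfaffians of rank-deficient skew matrices, and the passage from pointwise to identical vanishing via infiniteness of $K$ are all routine.
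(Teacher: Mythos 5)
Your proof is correct. The paper itself, being a survey, states this theorem without proof (it only cites Kostant and Farkas); your argument --- identifying $S(L)$ with polynomial functions on $L^*$ under the Kirillov--Kostant bracket, recognizing $\St_{2d}(f_1,\dots,f_{2d})(\xi)$ as the Pfaffian of the Gram matrix of the alternating form $B_\xi(x,y)=\xi([x,y])$ on the differentials $d_\xi f_i$, bounding its rank by the maximal coadjoint orbit dimension, and concluding by Zariski density over an infinite field --- is precisely the standard route taken in the cited sources, so it agrees with the intended proof rather than diverging from it.
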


The Lie nilpotence of class 2 of symmetric algebras $S(L)$, where $L$ is a Lie superalgebra,
was characterized by Shestakov. The next statement follows from Theorem 4 and Theorem 5 of~\cite{Shestakov93}.
\begin{Th}[\cite{Shestakov93}]\label{TSh93}
The symmetric algebra $S(L)$ of a Lie algebra $L$ over a field $K$ satisfies
the identity $\{x,\{y,z\}\}\equiv 0$ if and only if $L$ is abelian.
\end{Th}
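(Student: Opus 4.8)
The plan is to prove the two implications separately, extracting abelianness in the hard direction by exploiting \emph{both} products on $S(L)$ together with the fact that $S(L)$ is an integral domain. If $L$ is abelian, then $\{v_i,v_j\}=[v_i,v_j]=0$ for all basis elements, and since the Poisson bracket on $S(L)$ is determined on generators through the Leibnitz rule, $\{a,b\}=0$ for all $a,b\in S(L)$; in particular $\{x,\{y,z\}\}\equiv 0$ holds.

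Conversely, assume $S(L)$ satisfies $\{x,\{y,z\}\}\equiv 0$. First I would substitute linear elements $x=a$, $y=b$, $z=c$ with $a,b,c\in L$. Since $\{b,c\}=[b,c]\in L$ and $\{a,[b,c]\}=[a,[b,c]]\in L$, the identity yields $[a,[b,c]]=0$ for all $a,b,c\in L$; that is, $L$ is nilpotent of class at most $2$ and $L^2$ is central. This is not yet enough, so the essential step is to feed a degree-two element into the bracket.

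Next I would set $x=a$, $y=uv$ (the commutative product of $u,v\in L$), and $z=w\in L$. Expanding with the Leibnitz rule gives $\{uv,w\}=u[v,w]+v[u,w]$, and then $\{a,u[v,w]\}=u[a,[v,w]]+[v,w][a,u]$, with the analogous expression for the second summand. The class-$2$ nilpotency established above kills the terms $u[a,[v,w]]$ and $v[a,[u,w]]$, leaving the relation
$$[v,w][a,u]+[u,w][a,v]=0,\qquad a,u,v,w\in L,$$
now read inside the polynomial ring $S(L)$, where each bracket is a linear element and each product is a degree-two monomial.

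Finally I would specialize $w=u$, which collapses this to $[v,u][a,u]=0$ for all $a,u,v\in L$. Here I would invoke the crucial structural fact that $S(L)\cong K[v_i\mid i\in I]$ is an integral domain: if some $[v_0,u]\neq 0$, then $[v_0,u][a,u]=0$ forces $[a,u]=0$ for every $a\in L$, and taking $a=v_0$ contradicts $[v_0,u]\neq 0$. Hence $[v,u]=0$ for all $v$, and as $u$ is arbitrary, $L$ is abelian. The main obstacle is conceptual rather than computational: the purely Lie-theoretic substitution detects only class-$2$ nilpotency, and one must use the commutative multiplication to promote the identity to a relation between products of brackets, at which point the absence of zero divisors in $S(L)$ does the rest.
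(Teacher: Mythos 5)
Your proof is correct, but note that the paper itself contains no proof of this statement: being a survey, it only attributes the result to Theorems~4 and~5 of Shestakov's paper \cite{Shestakov93}, which treat the more general setting of Lie \emph{super}algebras (where the characterization of Lie nilpotency of class $2$ of $S(L)$ is more delicate than plain abelianness). So your argument should be judged as a self-contained alternative, and as such it works: the linear substitution correctly yields $[L,[L,L]]=0$; the substitution $y=uv$ together with the Leibnitz rule and antisymmetry correctly produces $u[a,[v,w]]+[v,w][a,u]+v[a,[u,w]]+[u,w][a,v]=0$, whose first and third terms vanish by class-$2$ nilpotency; and the specialization $w=u$ (legitimate in every characteristic, since $[u,u]=0$ is an axiom even when $\ch K=2$) gives $[v,u][a,u]=0$ inside the polynomial ring $S(L)\cong K[v_i\mid i\in I]$, where the integral-domain argument finishes the proof exactly as you say. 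What your route buys is an elementary, characteristic-free proof using nothing beyond the Leibnitz rule and the absence of zero divisors in $S(L)$; what Shestakov's route buys is the stronger superalgebra statement, from which the Lie algebra case quoted in the survey is only a corollary. One small remark: your intermediate relation $[v,w][a,u]+[u,w][a,v]=0$ is stronger than what you actually use, and it is worth observing (as your closing sentence hints) that the passage through degree-two elements is unavoidable, since the Heisenberg algebra satisfies $[L,[L,L]]=0$ but its symmetric algebra violates the identity, e.g.\ $\{e,\{ef,f\}\}=[e,f]^2\neq 0$.
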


Farkas proved the following statement that generalizes Kostant's Theorem~\ref{TKost}.

\begin{Th}[\cite{Farkas99}] \label{TSymm0}
Let $L$ be a Lie algebra over a field of characteristic zero.
Then the symmetric algebra $S(L)$ satisfies a
nontrivial Poisson identity if and only if $L$ contains an abelian
subalgebra of finite codimension.
\end{Th}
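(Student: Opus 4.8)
The plan is to route both directions through the coadjoint alternating forms $B_f(x,y)=f([x,y])$, $f\in L^*$, whose maximal rank is twice the maximal coadjoint orbit dimension occurring in Kostant's Theorem~\ref{TKost}. I will show that $S(L)$ is Poisson PI if and only if these ranks are uniformly bounded over $f\in L^*$, and that uniform boundedness is in turn equivalent to the existence of a finite-codimension abelian subalgebra. For sufficiency, suppose $A\subseteq L$ is abelian with $\dim L/A=k$, and fix a complement with basis $w_1,\dots,w_k$. The biderivation formula $\{g,h\}=\sum_{i,j}[v_i,v_j]\,\partial_{v_i}g\,\partial_{v_j}h$ together with $[A,A]=0$ shows that every nonzero term of any bracket carries a derivative $\partial_{w_s}$ in one of its two slots, so the Poisson bracket on $S(L)$ effectively uses only the $2k$ directions $w_1,\dots,w_k$. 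Substituting this into the standard polynomial, $\St_{2m}$ becomes a standard-type combination of the structure constants $[v_i,v_j]$ whose coadjoint forms all have rank $\le 2k$; since this involves only one layer of brackets, the computation takes place inside the finitely generated commutative algebra spanned by the finitely many structure constants occurring in a given substitution, and it vanishes once $2m>2k$ by the bounded-rank standard identity underlying Theorem~\ref{TKost}. Thus $\St_{2k+2}\equiv0$ on $S(L)$.

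For necessity, assume $S(L)$ satisfies a nontrivial Poisson identity. In characteristic zero, by Farkas' Theorem~\ref{TFarkas} it satisfies a nontrivial customary identity $\sum_{\sigma\in T_{2n}}\mu_\sigma\{x_{\sigma(1)},x_{\sigma(2)}\}\cdots\{x_{\sigma(2n-1)},x_{\sigma(2n)}\}\equiv0$ with $\mu_e=1$. Restricting the arguments to linear elements $x_i\in L$ makes each bracket $[x_i,x_j]\in L=S^1(L)$, so the identity lives in $S^n(L)$; pairing it with $f^{\otimes n}$, $f\in L^*$, turns it (up to the factor $n!\neq0$) into the universal scalar relation $\sum_{\sigma\in T_{2n}}\mu_\sigma\,B_f(x_{\sigma(1)},x_{\sigma(2)})\cdots B_f(x_{\sigma(2n-1)},x_{\sigma(2n)})=0$ among the perfect-matching monomials of the single form $B_f$. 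I would then prove the auxiliary lemma that these matching monomials are linearly independent as multilinear functions whenever $B_f$ is nondegenerate on a $2n$-dimensional subspace, checked by evaluating on a symplectic basis. Since our relation is nontrivial ($\mu_e=1$), no such subspace can exist, so rank $B_f<2n$ for every $f\in L^*$: the coadjoint ranks are uniformly bounded.

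It remains to convert uniformly bounded coadjoint rank into a finite-codimension abelian subalgebra, and this reduces to a purely linear-algebraic statement that is the heart of the argument: \emph{a linear subspace $\mathcal{B}\subseteq\wedge^2 L^*$ of alternating forms all of rank $<2n$ possesses a common isotropic subspace $A$ of codimension bounded by a function of $n$}, i.e. with $B|_{A\times A}=0$ for every $B\in\mathcal{B}$. Granting this and applying it to $\mathcal{B}=\{B_f:f\in L^*\}$ produces a finite-codimension subspace $A$ with $f([A,A])=0$ for all $f\in L^*$, hence $[A,A]=0$; such an $A$ is automatically an abelian subalgebra, of finite codimension in $L$, completing the equivalence.

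The main obstacle is exactly this common-isotropic lemma, compounded by the fact that $L$ may be infinite-dimensional. I would attack it by induction on the rank bound: in a space of forms of bounded rank one either finds a fixed functional shared by all forms (a compression, as in the decomposable case $\mathcal{B}\subseteq v^*\wedge L^*$, where $A=\ker v^*$ lowers the codimension by one) or confines all forms to a common small quotient, in either case reducing $n$ and iterating finitely often. Making this dichotomy precise is the content of the structure theory of spaces of alternating forms of bounded rank; the delicate point in infinite dimension is to keep the codimension bounded uniformly, independently of $\dim\mathcal{B}$, which I would secure either by the finite-dimensional compression-space classification applied to an exhausting chain of finite-dimensional subalgebras followed by a compactness argument (using that finite-codimension subspaces form a proper, Grassmannian family), or by running the induction intrinsically on $\wedge^2 L^*$. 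I expect the bookkeeping of this limit, ensuring that the witnessing isotropic subspaces do not drift as the subalgebras grow, to be where the real work lies.
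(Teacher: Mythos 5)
Your overall architecture --- extract a customary identity via Theorem~\ref{TFarkas0}, convert it into a uniform bound on the ranks of the coadjoint forms $B_f(x,y)=f([x,y])$, then convert bounded rank into an abelian subalgebra of finite codimension --- is coherent, and it is genuinely different from the route this paper describes for this circle of results, which goes through delta-sets and Neumann's theorem (Theorems~\ref{TtrPoisson} and~\ref{Treduction}: first force $\Delta(L)$ to have finite codimension and $\dim\Delta(L)^2<\infty$, and only then hunt for the abelian subalgebra, so that the relevant space of forms is finite dimensional and one can simply intersect radicals). Your reduction in the necessity direction is correct: restricting the customary identity to linear elements and evaluating at a point $f\in L^*$ does yield the relation among the matching monomials of $B_f$, and the symplectic-basis evaluation does show these monomials are linearly independent once $B_f$ has rank $\ge 2n$; hence $\mathrm{rank}\,B_f<2n$ for every $f$.

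There are, however, two genuine gaps. First, in the sufficiency direction you cannot invoke Kostant's Theorem~\ref{TKost}: it concerns finite-dimensional Lie algebras, and the finitely many elements of $L$ occurring in a substitution span a finite-dimensional subspace but not a subalgebra, so there is no finite-dimensional Lie algebra to which it applies; your ``finitely generated commutative algebra'' reduction does not repair this. What the argument actually needs is elementary: writing the bracket through the $2k$ derivations $\partial_{w_s}$ and $X_s=\sum_\alpha [w_s,a_\alpha]\partial_{a_\alpha}$ (where $\{a_\alpha\}$ is a basis of $A$), every term of the full alternation $\St_{2m}$ becomes a determinant whose $2m$ rows are drawn from these $2k$ derivations applied to the arguments; for $2m>2k$ two rows coincide and every term vanishes. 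Second --- the gap you yourself flag as the heart --- the common-isotropic lemma is left unproven, and the attack you propose (classification of primitive bounded-rank spaces plus a compactness argument over exhausting chains) is far heavier than necessary and is precisely where such plans tend to founder in infinite dimension. The lemma is in fact true, with codimension bound $<2n$, by a short pencil argument: since ranks are bounded, choose $B_0\in\mathcal{B}$ of maximal rank $r_0$, let $U=\mathrm{rad}(B_0)$ (codimension $r_0$), and fix a complement $C$ with $B_0|_{C}$ nondegenerate. For any $B\in\mathcal{B}$ and all but finitely many $t\in K$ the block of $B_0+tB$ on $C$ is invertible, so $\mathrm{rank}(B_0+tB)=r_0+\mathrm{rank}(\Sigma_t)$, where $\Sigma_t$ is the Schur complement on $U$; maximality of $r_0$ in the linear space $\mathcal{B}$ forces $\Sigma_t=0$ for generic $t$, and the coefficient of $t$ in $\Sigma_t$ is exactly $B|_{U\times U}$, whence $B(U,U)=0$ for all $B\in\mathcal{B}$. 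Nothing here needs $\dim L<\infty$, and genericity in $t$ is harmless since a characteristic-zero field is infinite. With these two repairs your proof closes, and it is arguably shorter than the delta-set route; without them, the proposal as written is incomplete exactly at its self-declared main step.
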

Giambruno and Petrogradsky extended this result to an arbitrary characteristic~\cite{GiPe06}.

\begin{Th}[\cite{GiPe06}]\label{TSymm_p}
Let $L$ be a Lie algebra over an arbitrary field.
Then the symmetric algebra $S(L)$ satisfies a nontrivial {\em multilinear}
Poisson identity if and only if $L$ contains an abelian subalgebra of finite codimension.
\end{Th}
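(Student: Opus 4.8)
The plan is to prove both implications, reducing the ``if'' part to a \emph{standard} Poisson identity and the ``only if'' part, via Farkas' customarization, to the leading term of a customary identity.

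\textbf{Sufficiency.} Suppose $A\subseteq L$ is abelian with $\dim L/A=k$, and fix a complement $C$ with $\dim C=k$, so $L=A\oplus C$. I claim $S(L)$ satisfies $\St_{2k+2}\equiv0$. Since $\St_{2m}$ is multilinear it suffices to evaluate it on monomials, and the Leibniz rule yields the splitting
\[
\St_{2m}(u\,v,u_2,\dots,u_{2m})=u\,\St_{2m}(v,u_2,\dots,u_{2m})+v\,\St_{2m}(u,u_2,\dots,u_{2m}),
\]
because in every monomial of $\St_{2m}$ the first slot sits in exactly one bracket. Iterating, $\St_{2m}\equiv0$ on $S(L)$ as soon as it vanishes on arguments taken from $L$ itself. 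By skew-symmetry we may take these to be distinct vectors of a basis adapted to $L=A\oplus C$, of which at most $k$ lie in $C$. In any term of $\St_{2k+2}$ the $2k+2$ arguments are distributed among $k+1$ brackets, and each nonzero bracket needs a factor from $C$ (as $[A,A]=0$); since there are at most $k$ basis vectors from $C$, at least one bracket has both arguments in $A$ and hence vanishes. Thus every term is zero and $\St_{2k+2}\equiv0$.

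\textbf{Necessity: the substitution.} Assume $S(L)$ satisfies a nontrivial multilinear Poisson identity. By Theorem~\ref{TFarkas} it then satisfies a nontrivial customary identity
\[
\sum_{\sigma\in T_{2n}}\mu_\sigma\{x_{\sigma(1)},x_{\sigma(2)}\}\cdots\{x_{\sigma(2n-1)},x_{\sigma(2n)}\}\equiv0,\qquad \mu_e=1,
\]
of some degree $2n$. Call $a_1,b_1,\dots,a_n,b_n\in L$ an \emph{independent symplectic $n$-system} if $[a_i,b_i]\ne0$ for all $i$ while every other bracket among these $2n$ elements vanishes. Substituting $x_{2i-1}=a_i$, $x_{2i}=b_i$, a bracket $\{x_p,x_q\}=[x_p,x_q]$ is nonzero only for the ``diagonal'' pairs $\{2i-1,2i\}$; hence the unique perfect matching of $\{1,\dots,2n\}$ all of whose pairs survive is the one belonging to the identity permutation, and the whole sum collapses to $\mu_e\,[a_1,b_1]\cdots[a_n,b_n]=z_1\cdots z_n$. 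As $S(L)$ is an integral domain and each $z_i\ne0$, this product is nonzero, contradicting the identity. Therefore \emph{$L$ contains no independent symplectic $n$-system}.

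\textbf{Necessity: the structural step (main obstacle).} It remains to deduce from the absence of an independent symplectic $n$-system that $L$ has an abelian subalgebra of finite codimension (bounded by a function of $n$); this is the heart of the matter and the step I expect to be hardest. The naive attempt---take a maximal system $a_1,b_1,\dots,a_m,b_m$ with $m<n$, so that its centralizer $C=C_L(a_1,b_1,\dots,a_m,b_m)$ is abelian by maximality---does \emph{not} bound the codimension, since the centralizer of finitely many elements may have infinite codimension (for instance, in the algebra with basis $x,y_1,y_2,\dots$ and $[x,y_i]=y_{i+1}$ the maximal system has size $1$ yet this centralizer is $0$, while the genuine abelian subalgebra of finite codimension is the ideal $\langle y_i\rangle$). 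The correct route is a Passman-type delta-set analysis: one controls the alternating forms $\beta_\phi(x,y)=\phi([x,y])$ for $\phi\in L^*$, showing that the bound $n$ caps their symplectic ranks, and then assembles an abelian subalgebra of finite codimension by an inductive ``isotropic extension'' carried out inside the FC-ideal $\Delta(L)=\{x\in L:\dim[x,L]<\infty\}$, where centralizers \emph{do} have finite codimension. Making this combinatorial bookkeeping yield a \emph{uniform} bound on the codimension is the technical core; once it is in place, the sufficiency direction closes the equivalence.
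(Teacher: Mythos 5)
Your sufficiency argument is correct and is essentially the standard one behind Theorems~\ref{TKost}, \ref{TSymm0} and \ref{TSymm_p}: in a customary monomial each variable sits in exactly one bracket, so the Leibnitz rule reduces $\St_{2k+2}$ to arguments in $L$, and the pigeonhole count on $L=A\oplus C$ kills every monomial. (Minor point: in characteristic $2$ you should say \emph{alternating} rather than ``skew-symmetric''; vanishing of $\St_{2m}$ on repeated arguments does hold in every characteristic, because terms with the repeated variables in different brackets cancel in pairs.) The necessity direction, however, has a genuine gap, and it sits exactly where you flag it. Moreover, the substitution lemma you actually prove is not the statement the structural step needs. Your collapse argument requires the off-diagonal brackets $[a_i,a_j]$, $[a_i,b_j]$, $[b_i,b_j]$ to vanish \emph{exactly} in $L$, and, as your own example with $[x,y_i]=y_{i+1}$ shows, the resulting conclusion (absence of independent symplectic $n$-systems) is too weak to yield an abelian subalgebra of finite codimension. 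On the other hand, the systems one extracts from a functional $\phi\in L^*$ whose form $\beta_\phi(x,y)=\phi([x,y])$ has large rank satisfy the orthogonality conditions only modulo $\Ker\phi$, not in $L$, so your lemma does not apply to them and the domain argument breaks down. The usable substitution is the $\phi$-relative one: if $\operatorname{rank}\beta_\phi\ge 2n$, choose $a_i,b_i$ with $\phi([a_i,b_i])=1$ and $\phi$ of all other brackets equal to zero, evaluate the customary identity of degree $2n$ on them, and push forward along the algebra homomorphism $S(L)\to K$ extending $\phi$; only the identity matching survives, giving $\mu_e\ne 0$, a contradiction. This yields the statement one can actually use: $\operatorname{rank}\beta_\phi\le 2n-2$ for \emph{every} $\phi\in L^*$.

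Even granting that correction, everything after ``the correct route is a Passman-type delta-set analysis'' in your proposal is an announcement, not a proof, and it is the heart of the theorem. What must be supplied is precisely the content of Theorems~\ref{TtrPoisson} and~\ref{Treduction} of~\cite{GiPe06}: from a customary identity one shows, using the delta-set properties (Lemma~\ref{PropDel}), Lemma~\ref{Ldilat} and Neumann's theorem on bilinear maps (Theorem~\ref{TNeu}), that there exist integers $n,M$ with $\Delta(L)=\Delta_M(L)$, $\dim L/\Delta(L)<n$ and $\dim \Delta(L)^2\le M^2$. Only then does the rank bound finish the argument: writing $[x,y]=\sum_{i=1}^{d}\beta_i(x,y)\,z_i$ on $\Delta:=\Delta(L)$ with respect to a basis $z_1,\dots,z_d$ of $\Delta^2$, each $\beta_i$ is the restriction of $\beta_{\phi_i}$ for an extension $\phi_i\in L^*$ of the coordinate functional, hence has rank at most $2n-2$; its radical then has codimension at most $2n-2$ in $\Delta$, and $B=\bigcap_{i=1}^{d}\operatorname{rad}\beta_i$ is a subspace of finite codimension in $L$ with $[B,\Delta]=0$, in particular an abelian subalgebra of finite codimension. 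Without proofs of the two finiteness statements ($\dim L/\Delta(L)<\infty$ and $\dim\Delta(L)^2<\infty$) and of the rank bound, your proposal establishes only the easy half of the equivalence.
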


The following result was obtained for the truncated symmetric algebra $\SSS(L)$
of a restricted Lie algebra $L$.
\begin{Th}[\cite{GiPe06}]\label{Tmain}
Let $L$ be a restricted Lie algebra.
Then the truncated symmetric algebra
$\SSS(L)$ satisfies a nontrivial {\em multilinear} Poisson identity if and only if
there exists a restricted ideal $H\subseteq L$ such that
 \begin{enumerate}
   \item
     $\dim L/H<\infty$;
   \item
     $\dim H^2<\infty$;
   \item
     $H$ is nilpotent of class 2.
 \end{enumerate}
\end{Th}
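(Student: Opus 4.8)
The plan is to treat the two implications separately, reducing the ``if'' direction to the earlier theorem on $S(L)$ (Theorem~\ref{TSymm_p}) and reserving the genuinely new work for the ``only if'' direction. For \textbf{sufficiency}, I would first record the structural consequence of the hypotheses that drives everything: setting $C:=H^2$, conditions (2),(3) give $\dim C<\infty$ and $[H,C]=[H,[H,H]]=0$, while the Jacobi identity together with $[L,H]\subseteq H$ ($H$ an ideal) yields $[L,C]=[L,[H,H]]\subseteq[[L,H],H]+[H,[L,H]]\subseteq[H,H]=C$. Thus $C$ is a \emph{finite-dimensional ideal of $L$} on which $H$ acts trivially, and in the quotient $L/C$ the image of $H$ is abelian of finite codimension $\dim L/H$. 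By Theorem~\ref{TSymm_p} the symmetric algebra $S(L/C)$ satisfies a nontrivial multilinear Poisson identity, hence so does its Poisson homomorphic image $\SSS(L/C)$. The canonical surjection $\SSS(L)\to\SSS(L/C)$ has kernel $N$, the Poisson ideal generated by the finitely many degree-one elements spanning $C$; since $c^p=0$ in $\SSS(L)$ for every $c\in C$, the commutative ideal $N$ is nilpotent, say $N^{\,e}=0$. It remains to promote the identity on $\SSS(L)/N$ to one on $\SSS(L)$: by Theorem~\ref{TFarkas} I may take the multilinear identity of $\SSS(L/C)$ to be a customary polynomial $f$, so every substitution from $\SSS(L)$ sends $f$ into $N$, whence the product $f^{(1)}\cdots f^{(e)}$ formed on $e$ pairwise disjoint blocks of variables vanishes identically on $\SSS(L)$. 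This product is again multilinear and customary, and it is nontrivial because the term obtained by choosing the leading (identity-permutation) monomial in each block is a product of brackets of distinct variables with coefficient $1$, which no other term can cancel.

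For \textbf{necessity}, suppose $\SSS(L)$ satisfies a nontrivial multilinear Poisson identity; by Theorem~\ref{TFarkas} it satisfies a nontrivial customary identity $\sum_{\sigma}\mu_\sigma\{x_{\sigma(1)},x_{\sigma(2)}\}\cdots\{x_{\sigma(2n-1)},x_{\sigma(2n)}\}\equiv0$ with $\mu_e=1$. Evaluating on degree-one elements $x_k=a_k\in L$ turns each bracket into a Lie commutator $[a_i,a_j]\in L$, so the identity becomes a relation $\sum_\sigma\mu_\sigma\,[a_{\sigma(1)},a_{\sigma(2)}]\cdots[a_{\sigma(2n-1)},a_{\sigma(2n)}]=0$ among degree-$n$ elements of the truncated polynomial algebra $\SSS(L)$, which for the standard polynomial $\St_{2n}$ amounts to the vanishing of the Pfaffian-type expression of the skew matrix $([a_i,a_j])$ with entries in the commutative algebra. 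I would then run a Passman-style delta-method on $L$: using these relations I would show that the set of elements of $L$ whose centralizer has finite codimension (equivalently, whose adjoint image is finite-dimensional) spans a subalgebra of finite codimension, take its restricted-ideal core to obtain a restricted ideal $H$ with $\dim L/H<\infty$, and extract from the bounded rank of the bracket form the bound $\dim H^2<\infty$. Finally I would use the truncation relations $c^p=0$ to force $[H,[H,H]]=0$, i.e.\ nilpotency of class $2$.

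The main obstacle is this ``only if'' analysis: extracting from a single customary relation both the finite-codimensional restricted ideal and the precise finiteness and nilpotency of its commutator. The delta-set bookkeeping is delicate, and the step most special to the present (truncated, positive-characteristic) setting is the passage from ``finite-dimensional commutator'' to ``class $2$'': one must see that the relations $\{a,b\}^p\equiv0$ built into $\SSS(L)$ permit a class-$2$ kernel that would be impossible for the untruncated $S(L)$, where Theorem~\ref{TSymm_p} forces a genuinely abelian subalgebra. By contrast the ``if'' direction above is essentially formal once Theorems~\ref{TSymm_p} and~\ref{TFarkas} are in hand.
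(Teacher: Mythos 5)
Your ``if'' direction is correct, and it is essentially the argument one finds in~\cite{GiPe06}: pass to $C=H^2$, which is a finite-dimensional ideal of $L$ with $H/C$ abelian of finite codimension in $L/C$; apply Theorem~\ref{TSymm_p} and customarization (Theorem~\ref{TFarkas}) to get a customary identity of $\SSS(L)/N\cong\SSS(L/C)$; then kill the nilpotent kernel $N$ by multiplying $e$ copies of that identity on disjoint blocks of variables. Your check that the product is a nonzero customary polynomial is also right, since distinct tuples of block permutations produce distinct pairings, hence distinct basis monomials of the free Poisson algebra. Note, for later, that this is the \emph{only} place where truncation is genuinely used: $N$ is nilpotent precisely because $c^p=0$ for $c\in C$.

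The ``only if'' direction, however, has a genuine gap, and it sits exactly at the step you single out as the crux. Your plan takes $H$ to be (the restricted core of) the span of the finite-width elements, i.e.\ essentially $\Delta(L)$, and then asserts that the truncation relations $c^p=0$ ``force'' $[H,[H,H]]=0$. No argument of this kind can succeed, because the conclusion is false for that choice of $H$: take $L=\mathfrak{sl}_2(K)$, $p>2$, as a restricted Lie algebra. Then $\SSS(L)$ is a $3$-generated commutative algebra, hence satisfies $\St_4\equiv 0$ by Farkas' lemma, so it certainly has a nontrivial multilinear Poisson identity; yet $\Delta(L)=L$ is not nilpotent of any class. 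So the ideal $H$ of the theorem must in general be taken \emph{strictly smaller} than $\Delta(L)$, and no appeal to $c^p=0$ can repair this, since (as the example shows) the truncated identities simply do not constrain $\Delta(L)$ to be nilpotent.

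The missing idea is a centralizer trick, and it uses no truncation at all. By Theorem~\ref{Treduction} (whose proof rests on Theorem~\ref{TtrPoisson}, Lemma~\ref{PropDel}, Lemma~\ref{Ldilat} and Neumann's Theorem~\ref{TNeu}) one gets $\Delta:=\Delta(L)=\Delta_M(L)$, a restricted ideal with $\dim L/\Delta<n$ and $\dim\Delta^2\le M^2$. Now set $H:=C_\Delta(\Delta^2)=\{x\in\Delta \mid [x,\Delta^2]=0\}$. Since $\Delta^2$ is a finite-dimensional ideal of $L$, the linear map $\Delta\to\mathfrak{gl}(\Delta^2)$, $x\mapsto \ad x|_{\Delta^2}$, has kernel $H$ of codimension at most $(\dim\Delta^2)^2$ in $\Delta$, so $\dim L/H<\infty$; $H$ is a restricted ideal of $L$ (the centralizer of an ideal is an ideal, and it is $p$-closed because $\ad(x^{[p]})=(\ad x)^p$); and $\dim H^2\le\dim\Delta^2<\infty$ with $[H,[H,H]]\subseteq[H,\Delta^2]=0$, i.e.\ $H$ is nilpotent of class at most $2$. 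Thus the asymmetry you try to locate in the necessity proof actually lives entirely in the sufficiency proof: class~$2$ with finite-dimensional square is \emph{enough} for $\SSS(L)$ (your nilpotent ideal $N$) but not for $S(L)$. The infinite-dimensional Heisenberg algebra $H=\langle a_i,b_i,c \mid [a_i,b_i]=c,\ i\in\N\rangle_K$ illustrates this: $\SSS(H)$ satisfies a multilinear identity by the theorem, while $S(H)$ satisfies none, because an abelian subalgebra of $H$ maps to an isotropic subspace of the symplectic form on $H/\langle c\rangle_K$ and therefore has infinite codimension, contradicting Theorem~\ref{TSymm_p}.
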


\section{Lie identities of symmetric Poisson algebras}
\label{Smain}
Now we discuss special cases of identities of symmetric Poisson algebras.
Remark that the identities of the (strong) Lie nilpotence and (strong) solvability are multilinear,
thus Theorem~\ref{Tmain} can be applied for such algebras.

\subsection{Lie nilpotence of truncated symmetric algebras $\SSS(L)$}
Let $R$ be a Poisson algebra.
Consider the {\em lower central series} of $R$ as a Lie algebra,
i.e., $\gamma_1(R)=R$ and $\gamma_{n+1}(R)=\{\gamma_n(R),R\}, n\geq1$.
We say that $R$ is {\it Lie nilpotent of class $s$} if and only if  $\gamma_{s+1}(R)=0$ but $\gamma_s(R)\neq 0$.
Clearly, the condition $\gamma_{s+1}(R)=0$ is equivalent to the {\it identity of Lie nilpotence of class} $s$:
$$\{\ldots\{\{X_0,X_1\},X_2\},\ldots,X_s\}\equiv 0.$$

Similarly to the associative case, one defines {\em upper Lie powers}.
At each step we take the ideal generated by commutators, namely,
put $R^{(0)}=R$ and $R^{(n)}=\{R^{(n-1)},R\}\cdot R$ for all $n\geq 1$
(the enumeration is shifted, because $\{R^{(n)}| n\ge 0\}$ is a filtration, see also~\cite{Pe11}).
A Poisson algebra $R$ is {\em strongly Lie nilpotent of class s}
iff $R^{(s)}= 0$ but $R^{(s-1)}\ne 0$.
The condition $R^{(s)}=0$ is equivalent to the identical relation of the {\em strong Lie nilpotence of class~$s$}:
\begin{equation*}
\{\{\ldots\{\{X_0,X_1\}\cdot Y_1,X_2\}\cdot Y_2,\ldots, X_{s-1}\}\cdot Y_{s-1},X_s\}\equiv 0.
\end{equation*}
 Observe that
\begin{equation}
\gamma_n(R)\subseteq R^{(n-1)},\ n\geq1.\label{gamma(R)em R(n-1)}
\end{equation}
So, the strong Lie nilpotence of class $s$ implies the Lie nilpotence of class at most $s$.
The Lie nilpotence of class $1$ is equivalent to the strong Lie nilpotence of class $1$ and equivalent
that $R$ is abelian.

\begin{Th}[\cite{PeIlana17}]\label{Tnilp}
Let $L$ be a Lie algebra over a field of positive characteristic $p$. Consider
its truncated symmetric Poisson algebra $\SSS(L)$. The following conditions are equivalent:
\begin{enumerate}
\item $\SSS(L)$ is strongly Lie nilpotent;
\item $\SSS(L)$ is Lie nilpotent;
\item $L$ is nilpotent and $\dim L^2<\infty$.
\end{enumerate}
\end{Th}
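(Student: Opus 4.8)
The plan is to establish the cyclic chain $(1)\Rightarrow(2)\Rightarrow(3)\Rightarrow(1)$. The implication $(1)\Rightarrow(2)$ is free, being exactly the containment \eqref{gamma(R)em R(n-1)}: strong Lie nilpotence of class $s$ forces $\gamma_{s+1}(\SSS(L))\subseteq \SSS(L)^{(s)}=0$. So two genuine implications remain. First I would attack $(2)\Rightarrow(3)$. Since the Lie nilpotence identity $\{\ldots\{\{X_0,X_1\},X_2\},\ldots,X_s\}\equiv 0$ is multilinear, Theorem~\ref{Tmain} applies and yields a restricted ideal $H\subseteq L$ with $\dim L/H<\infty$, $\dim H^2<\infty$, and $H$ nilpotent of class $2$. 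This already gives $\dim L^2<\infty$ after a short argument: because $\dim L/H<\infty$, the commutator $L^2$ lies in $H^2$ modulo a finite-dimensional piece coming from brackets with coset representatives, and $\dim H^2<\infty$. For the nilpotency of $L$ itself, I would evaluate the Lie nilpotence identity on elements $v_1,\dots,v_s\in L\subseteq\SSS(L)$: since the Poisson bracket restricted to $L$ is the original Lie bracket, $\{\ldots\{v_0,v_1\},\ldots,v_s\}=[v_0,v_1,\ldots,v_s]$, so $\gamma_{s+1}(L)=0$ and $L$ is nilpotent.

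The main work is $(3)\Rightarrow(1)$: assuming $L$ nilpotent with $\dim L^2<\infty$, produce an explicit bound on the strong Lie nilpotency class of $\SSS(L)$. The key structural observation is that $L^2$ is a finite-dimensional nilpotent Lie algebra, so by Engel's theorem the adjoint action of $L$ on $L^2$ is nilpotent, and moreover $L^2$ acts nilpotently on all of $L$. The bracket $\{a,b\}$ of two monomials in $\SSS(L)$ lands, by the Leibniz rule, in the ideal of $\SSS(L)$ generated by the images of the structure constants, i.e. by $L^2$. I would set up a filtration of $\SSS(L)$ by powers of the ideal $I$ generated by $L^2$, and show that each application of $\{-,R\}\cdot R$ either raises the $I$-degree or reduces a bounded nilpotency parameter. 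Because $\dim L^2<\infty$ and the polynomials are truncated (each generator satisfies $v^p=0$), the ideal $I$ is nilpotent: $I^N=0$ for $N$ bounded in terms of $\dim L^2$ and $p$. Combining the nilpotency of $\ad$ on the generators with the nilpotency of $I$ then forces $\SSS(L)^{(s)}=0$ for an explicit $s$.

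The hard part will be organizing the two separate sources of nilpotency in $(3)\Rightarrow(1)$ into a single decreasing invariant. On one hand, iterated brackets push into higher powers of the finite-dimensional ideal $I=(L^2)$, which terminates because $\SSS(L^2)$ is a truncated polynomial algebra in finitely many variables, hence finite-dimensional and nilpotent as an augmentation ideal. On the other hand, brackets of the form $\{v,w\}$ with $v,w\in L$ can stay within $L$ without increasing $I$-degree, and here one must invoke the nilpotency of $L$ (equivalently, that $\ad v$ acts nilpotently). The delicate point is that the \emph{strong} powers $R^{(n)}=\{R^{(n-1)},R\}\cdot R$ multiply by all of $R$ at each step, so a naive degree count in the commutative grading does not decrease; the right framework is to track the $I$-adic filtration rather than total polynomial degree, since multiplication by arbitrary elements of $R$ does not lower $I$-degree. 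I would expect the cleanest route to mirror the Riley--Shalev analysis of $u(L)$ underlying Theorem~\ref{uLNS}(1), transported to the commutative Poisson setting where the restricted enveloping algebra is replaced by $\SSS(L)$ and the role of $p$-nilpotence of $L^2$ is played automatically by the truncation $v^p=0$.
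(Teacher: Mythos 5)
Your implication $(2)\Rightarrow(3)$ contains a genuine gap, and it sits exactly where the real work of the theorem lies. The ``short argument'' that $\dim L/H<\infty$ together with $\dim H^2<\infty$ forces $\dim L^2<\infty$ is false: writing $L=H+V$ with $\dim V<\infty$, one has $L^2=[H,H]+[H,V]+[V,V]$, and while $[H,H]$ and $[V,V]$ are finite dimensional, the term $[H,V]$ need not be --- a single coset representative $v$ can have infinite-dimensional image $[H,v]$. The paper itself exhibits the counterexample (the algebra of Lemma~\ref{L22b}): $L=\langle x,y_i,z_i\mid [x,y_i]=z_i,\ i\in\N\rangle_K$ has the abelian ideal $H=\langle y_i,z_i\mid i\in\N\rangle$ of codimension $1$, so it satisfies all the conditions appearing in Theorem~\ref{Tmain} (it is even nilpotent of class $2$), yet $L^2=\langle z_i\mid i\in\N\rangle$ is infinite dimensional, and hence by the very theorem you are proving $\SSS(L)$ is \emph{not} Lie nilpotent (it is solvable, by Lemma~\ref{L22b}, so it does satisfy a nontrivial multilinear identity). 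This shows that the hypothesis you feed into $(2)\Rightarrow(3)$ --- ``$\SSS(L)$ satisfies \emph{some} nontrivial multilinear Poisson identity'' --- is strictly weaker than Lie nilpotence, so no argument that starts only from the conclusion of Theorem~\ref{Tmain} can ever output $\dim L^2<\infty$. Closing the gap requires evaluating the Lie nilpotence identity on genuine monomials of $\SSS(L)$, not merely on elements of $L$: in the example above, $\{\ldots\{\{xy_1,x\},xy_2\},\ldots,xy_k\}=\pm\, xz_1\cdots z_k\neq 0$ for every $k$, and it is bracket constructions of precisely this kind that rule out elements $x$ with $\dim[L,x]=\infty$ and ultimately force $\dim L^2<\infty$. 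A secondary defect in the same step: Theorem~\ref{Tmain} is stated for \emph{restricted} Lie algebras and produces a \emph{restricted} ideal, whereas the $L$ of Theorem~\ref{Tnilp} need not be restricted; the applicable tool is the reduction Theorem~\ref{Treduction}, valid for arbitrary $L$, which gives $\Delta=\Delta(L)$ of finite codimension with $\dim\Delta^2<\infty$ --- but even then the same gap persists, since one must still prove $L=\Delta(L)$ using the nilpotence identity.

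Your implications $(1)\Rightarrow(2)$ (via the containment~\eqref{gamma(R)em R(n-1)}) and the nilpotence of $L$ in $(2)\Rightarrow(3)$ (restricting the bracket to $L\subseteq\SSS(L)$) are fine, and your sketch of $(3)\Rightarrow(1)$ is in the right spirit: the ideal $I=L^2\cdot\SSS(L)$ is nilpotent because $L^2$ is finite dimensional and truncated, $\{\SSS(L),\SSS(L)\}\subseteq I$, and bracketing either raises the $I$-adic degree or pushes factors up the (terminating) lower central series of $L$; this broadly matches the filtration and commutator-product machinery of~\cite{PeIlana17} (see Theorem~\ref{Tcomutt}, which refines such estimates to the exact class formula of Theorem~\ref{Tclasses}). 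But as written the proposal does not prove the theorem: the equivalence breaks at $(2)\Rightarrow(3)$, which is the step carrying the substance of the result.
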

Let $L$ be a Lie algebra over a field $K$ ($\ch K=p>0$).
Using the upper Lie powers, define
the {\it Poisson dimension subalgebras} ({\it truncated Poisson dimension subalgebras}, respectively)  of $L$ as:
\begin{alignat*}{2}
D^{S}_{(n)}(L)&=L\cap (S (L))^{(n)},  &\qquad n&\ge 0;\\
D^{s}_{(n)}(L)&=L\cap (\SSS (L))^{(n)},& \qquad n&\ge 0.
\end{alignat*}
We obtain a description of these subalgebras~\cite{PeIlana17} similar to that
for group rings~\eqref{dim-subgroups} and restricted enveloping algebras~\eqref{dim-subalg}
(compare with the first term of that products):
\begin{alignat*}{2}
D^S_{(n)}(L)&=\gamma_{n+1}(L),&\qquad n&\ge 0;\\
D^s_{(n)}(L)&=\gamma_{n+1}(L),&\qquad n&\ge 0.
\end{alignat*}
We compute the classes of Lie nilpotence and strong Lie nilpotence.
We get an analogue of the formulas known for group rings (Theorem~\ref{Tgr-nilp}) and restricted enveloping algebras
(Theorem~\ref{Tu-nilp}). The analogy is better seen in terms of truncated Poisson dimension subalgebras.

\begin{Th}[\cite{PeIlana17}]\label{Tclasses}
Let $L$ be a Lie algebra over a field of positive characteristic $p>3$,
such that the truncated symmetric Poisson algebra $\SSS(L)$ is Lie nilpotent.
The following numbers are equal:
\begin{enumerate}
\item the strong Lie nilpotency class of $\SSS(L)$;
\item the Lie nilpotency class of  $\SSS(L)$;
\item $$1+(p-1)\sum_{n\geq1}n\cdot\dim(\gamma_{n+1}(L)/\gamma_{n+2}(L)).$$
\end{enumerate}
In cases $p=2,3$, the numbers 1) and 3) remain equal.
\end{Th}
In case $p=2,3$, the number above yields an upper bound for the Lie nilpotency class.
Also, we have a lower bound for the Lie nilpotency class, $L$ being non-abelian~\cite{PeIlana17}:
$$2+(p-1)\sum_{n\geq2}(n-1)\cdot\dim(\gamma_{n+1}(L)/\gamma_{n+2}(L)).$$

\subsection{Solvability of truncated symmetric algebras $\SSS(L)$}
Let $R$ be a  Poisson algebra.
Consider its {\it derived series} as a Lie algebra:
$\delta_0(R)=R$, $\delta_{n+1}(R)=\{\delta_n(R),\delta_n(R)\}$, $n\geq 0$.
Polynomials of {\it solvability} are defined as: $\delta_1(X_1,X_2)=\{X_1,X_2\}$ and
\begin{equation*}
\delta_{n+1}(X_1,X_2,\ldots,X_{2^{n+1}})
=\big\{\delta_{n}(X_1,\ldots,X_{2^n}),\delta_n(X_{2^{n}+1},\ldots,X_{2^{n+1}})\big\}, \quad n\ge 1.
\end{equation*}
A Poisson algebra $R$ is \textit{solvable of length $s$} if,
and only if, $\delta_{s}(R)=0$ and $\delta_{s-1}(R)\ne 0$,
or equivalently, $R$ satisfies the above identity of Lie solvability
$\delta_s(\ldots)\equiv 0$, $s$ being minimal.

Define the {\it upper derived series}:
$\tilde{\delta}_0(R)=R$ and $\tilde{\delta}_{n+1}(R)=\{\tilde{\delta}_{n}(R),\tilde{\delta}_{n}(R)\}\cdot R$, $n\geq 0$.
Define polynomials of the {\it strong solvability} by
$\tilde{\delta}_1(X_1,X_2,Y_1)=\{X_1,X_2\}\cdot Y_1$, and
\begin{multline*}
\tilde{\delta}_{n+1}(X_1,\ldots,X_{2^{n+1}},Y_1,\ldots,Y_{2^{n+1}-1}) \\
=\Big\{\tilde{\delta}_{n}(X_1,\ldots,X_{2^{n}},Y_1,\ldots,Y_{2^{n}-1}),
 \tilde{\delta}_{n}(X_{2^{n}+1},\ldots,X_{2^{n+1}},Y_{2^{n}},\ldots,Y_{2^{n+1}-2})\Big\} \cdot Y_{2^{n+1}-1},
 \quad n\ge 1.
\end{multline*}

A Poisson algebra $R$ is {\it strongly solvable of length} $s$ iff
$\tilde{\delta}_{s}(R)=0$ and $\tilde{\delta}_{s-1}(R)\ne 0$,
or equivalently $R$ satisfies $\tilde\delta_s(\ldots)\equiv 0$, $s$ being minimal.
Observe that
\begin{equation}\label{deltindeltb}
\delta_{s}(R)\subseteq\tilde{\delta}_{s}(R),\qquad s\geq 0.
\end{equation}
So, strong solvability of length $s$ implies solvability of length at most $s$.
The solvability of length $1$ is equivalent to the strong solvability of length $1$ and equivalent
that $R$ is an abelian Lie algebra.

\begin{Th}[\cite{PeIlana17}]\label{Tsolv}
Let $L$ be a Lie algebra over a field of positive characteristic $p\geq 3$. Consider
its truncated symmetric Poisson algebra $\SSS(L)$. The following conditions are equivalent:
\begin{enumerate}
\item $\SSS(L)$ is strongly solvable;
\item $\SSS(L)$ is solvable;
\item $L$ is solvable and $\dim L^2<\infty$.
\end{enumerate}
In case $p=2$, conditions 1) and 3) remain equivalent.
\end{Th}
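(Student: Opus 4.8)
The plan is to establish the cycle of implications (1)$\Rightarrow$(2)$\Rightarrow$(3)$\Rightarrow$(1), together with the characteristic-2 addendum. The implication (1)$\Rightarrow$(2) is immediate from \eqref{deltindeltb}, since strong solvability of length $s$ forces $\delta_s(\SSS(L))\subseteq\tilde\delta_s(\SSS(L))=0$. For (2)$\Rightarrow$(3), I would argue that solvability of $\SSS(L)$ imposes both finiteness of $\dim L^2$ and solvability of $L$. The first is the harder half and is where I expect to invoke Theorem~\ref{Tmain}: since the solvability identity $\delta_s(\ldots)\equiv 0$ is multilinear, solvability of $\SSS(L)$ yields a nontrivial multilinear Poisson identity, so by Theorem~\ref{Tmain} there is a restricted ideal $H$ with $\dim L/H<\infty$, $\dim H^2<\infty$, and $H$ nilpotent of class~$2$. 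The solvability of $L$ should then follow because $L/H$ is finite dimensional and $H$ is nilpotent; one must still rule out that a finite-dimensional $L/H$ destroys solvability, but a finite-dimensional Lie algebra over a field need not be solvable, so \emph{solvability of $\SSS(L)$ itself} must be used to pin down $L/H$ as well. Concretely, I would show that $L$ embeds (as a Lie subalgebra, via $v\mapsto v$) into $\SSS(L)^{(-)}$, so that $\delta_s(\SSS(L))=0$ restricts to $\delta_s(L)=0$, giving solvability of $L$ directly; finiteness of $\dim L^2$ then comes separately from Theorem~\ref{Tmain} as above.

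The substantive direction is (3)$\Rightarrow$(1): assuming $L$ solvable with $\dim L^2<\infty$, I must prove $\SSS(L)$ is \emph{strongly} solvable. The idea is to bound the upper derived series of $\SSS(L)$ in terms of structural data of $L$. Write $V=L^2$, a finite-dimensional solvable Lie algebra, and split off a complement so that $\SSS(L)\cong\SSS(L/V)\otimes\SSS(V)$ as commutative algebras, where $L/V$ is abelian. The Poisson bracket of any two elements lands in the ideal generated by $V=L^2$, since $\{L,L\}=L^2$ and the Leibnitz rule propagates this. The key computation is that each application of the bracket pushes us deeper: one shows by induction that $\tilde\delta_n(\SSS(L))$ is contained in the ideal generated by $\delta_n(V)$-type terms, exploiting that $V$ is finite dimensional and solvable so that $\delta_t(V)=0$ for some $t$. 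Because $\SSS(V)$ is a truncated polynomial algebra in finitely many variables, it is itself nilpotent as a non-unital ideal of augmentation, and this finiteness is what converts solvability of $V$ into strong solvability of the whole algebra.

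The main obstacle will be the characteristic restriction $p\geq 3$ and the delicate bookkeeping in the bracket estimates. The condition $p\geq 3$ enters because in the computation of $\tilde\delta_n$ one repeatedly uses the Leibnitz rule together with power-truncation relations $v^p=0$, and in characteristic $2$ certain cross terms of the form $\{a,a\}$-adjacent expressions behave anomalously, exactly as in the associative and restricted-enveloping cases (compare Theorem~\ref{uLNS}(3) and Siciliano's example cited after it). I expect that the hard part is showing that strong solvability follows from plain solvability when $p\geq 3$, i.e.\ the equivalence (2)$\Leftrightarrow$(1), which should parallel Siciliano's result~\cite{Sic06} for restricted enveloping algebras; the combinatorial heart is controlling how the auxiliary commutative factors $Y_i$ in the strong series $\tilde\delta_n$ can be absorbed once $V$ is finite dimensional and $p$-truncated. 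For $p=2$ one settles only for the equivalence of (1) and (3), leaving open whether solvability and strong solvability coincide, mirroring the known gap in the restricted enveloping setting.
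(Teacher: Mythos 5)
Your skeleton --- the cycle $(1)\Rightarrow(2)\Rightarrow(3)\Rightarrow(1)$, with $(1)\Rightarrow(2)$ from \eqref{deltindeltb}, solvability of $L$ read off from the embedding $L\subseteq \SSS(L)^{(-)}$, and multilinearity of $\delta_s(\ldots)\equiv 0$ feeding into the reduction machinery of \cite{GiPe06} --- is the same as in \cite{PeIlana17}. But the execution has a genuine gap at exactly the hard step, $(2)\Rightarrow(3)$. A minor repairable point first: Theorem~\ref{Tmain} is stated for \emph{restricted} Lie algebras, while the $L$ of Theorem~\ref{Tsolv} need not be restricted; the correct tool is Theorem~\ref{Treduction}, which the survey states for arbitrary $L$ precisely for this purpose. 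The unrepairable point is your final claim that ``finiteness of $\dim L^2$ then comes separately from Theorem~\ref{Tmain}.'' It does not: neither Theorem~\ref{Tmain} nor Theorem~\ref{Treduction} implies $\dim L^2<\infty$. Their conclusions ($\dim L/H<\infty$, $\dim H^2<\infty$, $H$ nilpotent of class $\le 2$, respectively $\dim L/\Delta<\infty$, $\dim \Delta^2<\infty$) are strictly weaker than condition 3): the algebra $L=\langle x, y_i \mid [x,y_i]=y_i,\ i\in\N\rangle_K$ of Lemma~\ref{L22} satisfies all of them with $H=\Delta=\langle y_i\mid i\in\N\rangle$ abelian of codimension $1$, yet $L^2=\langle y_i\mid i\in\N\rangle$ is infinite dimensional. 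Writing $L=\Delta\oplus C$ with $\dim C<\infty$, one has $L^2=[C,C]+[C,\Delta]+\Delta^2$, and the entire difficulty is to bound $[C,\Delta]$; Neumann's Theorem~\ref{TNeu} does not apply directly because only one of the two required bounds is available ($\dim[C,x]\le M$ for $x\in\Delta_M(L)$, but nothing a priori bounds $\dim[c,\Delta]$). This step must use the solvability identity itself, not merely the existence of some multilinear identity, together with $p\ge 3$. Lemma~\ref{L22} shows this is not a technicality: in characteristic $2$ that very algebra has $\SSS(L)$ solvable of length $3$, so no argument of the form ``solvable $\Rightarrow$ some multilinear PI $\Rightarrow$ reduction data'' can by itself ever yield $\dim L^2<\infty$.

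Relatedly, you misplace the characteristic hypothesis. You locate the role of $p\ge 3$ in the direction $(3)\Rightarrow(1)$; in fact $(1)\Leftrightarrow(3)$ holds for $p=2$ as well (this is the last sentence of the theorem), and $(3)\Rightarrow(1)$ admits a characteristic-free proof, e.g.\ by induction on the derived length of $L$: let $M$ be the last nonzero term of the derived series of $L$, so $M$ is an abelian ideal with $\dim M<\infty$ (as $M\subseteq L^2$); the associative ideal $J=M\cdot\SSS(L)$ is a Poisson ideal with $\SSS(L)/J\cong\SSS(L/M)$, it satisfies $\{J,J\}\subseteq J^2$ (because $M$ is an abelian Lie ideal) and hence $\{J^a,J^b\}\subseteq J^{a+b}$, and it is associatively nilpotent ($J^N=0$ for $N>(p-1)\dim M$ by truncation); so once $\tilde\delta_t(\SSS(L))\subseteq J$ by the inductive hypothesis, the upper derived series falls into successively higher powers of $J$ and terminates. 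The characteristic restriction belongs entirely to $(2)\Rightarrow(3)$, which is the part of the theorem for which your proposal, as written, supplies no valid argument.
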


Solvability length of symmetric Poisson algebras was further studied in~\cite{Sic20}.
Namely, an upper and lower bounds for the strong derived length of $s(L)$ are obtained.
It is established when $s(L)$ is metabelian, that is, it has derived length 2.
For a non-abelian Lie algebra, a lower bound for the derived length of $s(L)$ is obtained.
Finally, necessary and sufficient conditions under which that value is attained are determined.

Observe that the description of solvable group rings in characteristic 2
looks very nice (Theorem~\ref{TKGNS}).
But the answer to a similar question for the restricted enveloping algebras is rather complicated
and was obtained only recently in~\cite{SiUs13}.

The problem of solvability of $\SSS(L)$ in case $\ch K=2$ is open.
We show that the situation is different from other characteristics.
Namely, in case $\ch K=2$, we give two examples of truncated symmetric Poisson algebras that are solvable
but not strongly solvable, see Lemma~\ref{L22} and Lemma~\ref{L22b}.
A close fact is that the Hamiltonian algebras $\mathbf{H}_2(K)$ and $\mathbf{h}_2(K)$ are solvable
but not strongly solvable in case $\ch K=2$ (Lemma~\ref{L23}).
This is an analogue of a well-known fact that the matrix ring $\mathrm{M}_2(K)$ of $2\times 2$ matrices over
a field $K$, $\ch K=2$, is solvable but not strongly solvable.

\subsection{Nilpotency and solvability of symmetric algebras $S(L)$}
The following extension of a result of Shestakov~\cite{Shestakov93} is proved.

\begin{Th}[\cite{PeIlana17}]\label{Pe16}
Let $L$ be a Lie algebra over a field $K$, and $S(L)$ its symmetric Poisson algebra.
The following conditions are equivalent:
\begin{enumerate}
\item $L$ is abelian;
\item $S(L)$ is strongly Lie nilpotent;
\item $S(L)$ is Lie nilpotent;
\item $S(L)$ is strongly solvable;
\item $S(L)$ is solvable (here assume that $\ch K\ne 2$).
\end{enumerate}
\end{Th}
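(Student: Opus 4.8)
The implication $(1)\Rightarrow(2)$ is immediate: if $L$ is abelian then $\{v_i,v_j\}=[v_i,v_j]=0$ on a basis, so by the Leibnitz rule the whole Poisson bracket on $S(L)$ vanishes; thus $S(L)^{(1)}=0=\tilde\delta_1(S(L))$ and $S(L)$ is strongly Lie nilpotent and strongly solvable of class/length $1$, forcing every other condition. Conversely, using $\gamma_n(R)\subseteq R^{(n-1)}$, the inclusion $\delta_s(R)\subseteq\tilde\delta_s(R)$ from \eqref{deltindeltb}, and the standard fact that Lie nilpotence implies solvability, each of $(2),(3),(4)$ makes $S(L)$ solvable as a Lie algebra, and so does $(5)$. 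Hence it suffices to prove the reverse implications: if $S(L)$ is Lie nilpotent (respectively strongly solvable, or merely solvable with $\ch K\neq 2$), then $L$ is abelian. Note that Lie nilpotence and strong solvability carry \emph{no} restriction on $p$, which is exactly what keeps $(1)$--$(4)$ equivalent in characteristic $2$ while $(5)$ must be excluded there.

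First I would record the reduction $L\hookrightarrow S(L)$. Since $\{S^1,S^1\}\subseteq S^1$ and the bracket on $S^1=L$ is the original one, $L$ is a Lie subalgebra of $S(L)^{(-)}$, so $\gamma_n(L)\subseteq\gamma_n(S(L))$ and $\delta_n(L)\subseteq\delta_n(S(L))$; in particular each hypothesis makes $L$ solvable (and nilpotent in the Lie nilpotent case). The plan is then to show that a \emph{non-abelian} such $L$ produces inside $S(L)$ a Poisson subquotient isomorphic to $S(\mathfrak b)$ or to the Hamiltonian algebra $\H_2$. Here $\mathfrak b=\langle x,y\mid[x,y]=y\rangle$ is the two-dimensional non-abelian Lie algebra, with $S(\mathfrak b)=K[x,y]$ and $\{f,g\}=(f_xg_y-f_yg_x)\,y$; and $\mathfrak h=\langle a,b,z\mid[a,b]=z,\ z\text{ central}\rangle$ is the Heisenberg algebra, where $z$ is Poisson-central, $(z-1)$ is a Poisson ideal, and $S(\mathfrak h)/(z-1)\cong\H_2$.

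The structural heart is the claim that a non-abelian solvable $L$ admits a section isomorphic to $\mathfrak b$ or $\mathfrak h$. I would pass to the metabelian section $P=L^{(r-2)}$, where $L^{(r)}=0\neq L^{(r-1)}$, so that $V:=[P,P]=L^{(r-1)}$ is a non-zero abelian ideal with $P/V$ abelian; as $S(P)\subseteq S(L)$ it suffices to treat $P$. Working over the algebraic closure (the identities in play are multilinear, hence stable under scalar extension, and abelianness is unchanged), $P/C_P(V)$ is an abelian family of commuting operators on $V$. If $V$ is central then $P$ is nilpotent of class $2$ and contains $\mathfrak h$; otherwise I pick $x$ with $\phi:=\ad x|_V\neq0$ and $v$ with $\phi v\neq0$, obtaining $\langle x,m\rangle\cong\mathfrak b$ from a non-zero eigenvalue, and $\langle x,\phi^{k-1}v,\phi^k v\rangle\cong\mathfrak h$ when $\phi$ is nilpotent on the orbit of $v$. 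In characteristic $p$ this dichotomy is exhaustive: the truncation $S(L)\twoheadrightarrow\SSS(L)$ of Example~3 is a Poisson surjection, so Theorems~\ref{Tnilp} and~\ref{Tsolv} force $\dim L^2<\infty$, whence $V$ is finite-dimensional and Engel's theorem closes the argument (if all $\ad x|_V$ are nilpotent, then $L$ itself is nilpotent and we are back in the $\mathfrak h$ case).

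It then remains to verify the two obstructions: $S(\mathfrak b)$ and $\H_2$ are never Lie nilpotent and never strongly solvable, and are non-solvable whenever $\ch K\neq2$. These are the Poisson counterparts of the facts that $\mathrm M_2(K)$ is solvable exactly in characteristic $2$ and is never strongly solvable, the parallel statement for $\H_2$ and $\mathbf h_2$ being recorded elsewhere in the paper. Since Lie nilpotence, solvability and strong solvability all pass to Poisson subalgebras and quotients, a section $\mathfrak b$ or $\mathfrak h$ contradicts each hypothesis and forces $L$ abelian, closing the equivalence of $(1)$--$(4)$ over any field and of $(1)$--$(5)$ for $\ch K\neq2$. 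The step I expect to be the main obstacle is the structural lemma in the infinite-dimensional solvable case: a shift-type $\phi$ on $V$ may have \emph{no} eigenvalue and fail to be nilpotent, so one cannot extract $\mathfrak b$ or $\mathfrak h$ as a subalgebra and must instead quotient $P$ by a co-finite-dimensional ideal truncating $\phi$ into a nilpotent operator; in positive characteristic this difficulty disappears because of the finiteness $\dim L^2<\infty$ from Theorems~\ref{Tnilp}--\ref{Tsolv}, leaving only the delicate characteristic-$2$ separation of solvability from strong solvability.
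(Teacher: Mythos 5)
Your plan is correct in outline, but it takes a genuinely different route from the one behind the paper. Being a survey, the paper does not prove Theorem~\ref{Pe16}; the Remark following it describes the proof in \cite{PeIlana17} as resting on the Poisson PI machinery: Lie nilpotence and (strong) solvability are multilinear Poisson identities, so Farkas's customarization (Theorem~\ref{TFarkas}) and the delta-set reduction of \cite{GiPe06} (Theorem~\ref{Treduction}) give that $\Delta(L)$ has finite codimension and finite-dimensional derived subalgebra, after which the analysis proceeds; the authors even state they see no way to avoid this machinery. You bypass it entirely for $S(L)$: extend scalars (harmless, since the identities involved are multilinear), pass to a metabelian subalgebra $P$ of the necessarily solvable $L$, extract a section isomorphic to $\mathfrak{b}=\langle x,y\mid [x,y]=y\rangle$ or to the Heisenberg algebra $\mathfrak{h}$, and contradict the hypotheses inside $S(\mathfrak{b})$ or $\H_2\cong S(\mathfrak{h})/(z-1)$, using that these identities pass to Poisson subalgebras and Poisson quotients. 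Your structural dichotomy is sound, including the fix in the shift case: inside $Kx+U$ with $U=\langle \phi^k v\mid k\ge 0\rangle$, factor out the ideal $\langle \phi^k v\mid k\ge 2\rangle$ to obtain $\mathfrak{h}$. Note that this fix works in \emph{every} characteristic, so your appeal to Theorems~\ref{Tnilp} and~\ref{Tsolv} in characteristic $p$ is an unnecessary detour, and dropping it makes the whole proof independent of the delta-set machinery those theorems rest on. What your route buys is an elementary, self-contained argument, possible precisely because the answer for $S(L)$ is ``abelian'', so only two minimal non-abelian configurations must be excluded; what it cannot give are the truncated results (Theorems~\ref{Tmain}, \ref{Tnilp}, \ref{Tsolv}), whose conclusions are finiteness conditions that no finite list of obstruction sections can detect.

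The one genuine gap is that your two ``obstructions'' are asserted rather than proved, and they are only partially ``recorded elsewhere in the paper''. Lemma~\ref{L23} does cover $\H_2$ in characteristic $2$; moreover $\{X,Y\}=1\in\{\H_2,\H_2\}$ gives $\tilde\delta_1(\H_2)=\H_2$, so $\H_2$ is never strongly solvable, and for $\ch K\ne 2$ non-solvability follows from the subalgebra $\langle X^2,XY,Y^2\rangle\cong\mathfrak{sl}_2$. But for $\mathfrak{b}$ the paper only records the infinite-rank cousin (Lemma~\ref{LS2solvA}), and failure of strong solvability does \emph{not} pass down to subalgebras, so that lemma says nothing about the two-dimensional $\mathfrak{b}$ you actually use; worse, non-solvability of $S(\mathfrak{b})$ for $\ch K\ne 2$ appears nowhere in the paper and is genuinely a statement about the full polynomial algebra, since the truncated $\SSS(\mathfrak{b})$ \emph{is} solvable (even strongly solvable) for $p\ge 3$ by Theorem~\ref{Tsolv}. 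Both gaps are fillable by direct computation: the monomials $e_k=x^{k+1}y^k\in S(\mathfrak{b})$ satisfy $\{e_a,e_b\}=(b-a)e_{a+b}$, a positive-part Witt algebra, which is non-solvable whenever $\ch K\ne 2$; and an induction on the power of $y$ dividing $\tilde\delta_n(S(\mathfrak{b}))$ shows each $\tilde\delta_n(S(\mathfrak{b}))$ contains a nonzero power of $y$, so $S(\mathfrak{b})$ is not strongly solvable in characteristic $2$ either. With these computations supplied, your plan becomes a complete proof.
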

In case $\ch K=2$, the solvability of the symmetric Poisson algebra $S(L)$ is an open question.
Two examples of Lie algebras mentioned above also
yield solvable symmetric algebras which are not strongly solvable (Lemma~\ref{LS2solvA} and Lemma~\ref{LS2solvB}).
\subsection*{Remark}
Formally, our statements on ordinary Lie nilpotency and solvability are concerned
only with the {\it Lie structure} of the Poisson algebras $\SSS(L)$ and $S(L)$.
But our proof heavily relies on Theorem~\ref{Treduction} of~\cite{GiPe06}, which in turn
uses the existence of a nontrivial customary identity given by Theorem~\ref{TFarkas} (Farkas~\cite{Farkas98}).
In this way, we need the {\it Poisson structure} of our algebras to prove our results.
We do not see ways to prove them using the theory of Lie identical relations only.

\subsection{Delta-sets and multilinear Poisson identical relations}

Now we present an important instrument to prove our results  on Poisson identities in enveloping algebras.
{\it Delta-sets} in groups were introduced by Passman to study identities in the group rings~\cite{Pas72}.
Namely, let $G$ be a group, then
\begin{align*}
\Delta_n(G)&:=\{a\in G\mid |a^G|\leq n\}, \qquad n\geq0;\\
\Delta(G) &:=\mathop{\cup}\limits_{n=0}^{\infty}\Delta_n(G)=\{a\in G\mid |a^G|<\infty\}.
\end{align*}
A crucial step to specify group ring with identical relations was to establish that
there exist integers $n,m$ such that $|G:\Delta_n(G)|\le m$, see~\cite{Pas72}.

In case of Lie algebras, the delta-sets were introduced by Bahturin to study identical relations of the universal enveloping algebras~\cite{Ba74}.
Let $L$ be a Lie algebra, one defines the delta-sets as sets of elements of {\it finite width} as follows:
\begin{align*}
\Delta_n(L)&:=\{x\in L\mid\dim [L,x]\leq n\}, \qquad n\geq0;\\
\Delta(L) &:=\mathop{\cup}\limits_{n=0}^{\infty}\Delta_n(L)=\{x\in L\mid\dim [L,x]<\infty\}.
\end{align*}

Note that $\Delta_n(L)$, $n\ge 0$, is not a subalgebra or even a subspace in a general case.
The basic properties of the delta-sets are as follows.

\begin{Lemma}[\cite{BMPZ,Pe92}]
\label{PropDel}
Let $L$ be a (restricted) Lie algebra, $n,m\ge 0$.
 \begin{enumerate}
  \item
    $\Delta_n(L)$ is invariant under scalar multiplication;
  \item
    if $x\in\Delta_n(L)$, $y\in\Delta_m(L)$, then
     $\alpha x+\beta y\in\Delta_{n+m}(L)$, where  $\alpha,\beta\in K$;
  \item
    if $x\in\Delta_n(L)$, $y\in L$, then
     $[x,y]\in\Delta_{2n}(L)$;
  \item
    if $x\in\Delta_n(L)$ and $L$ a restricted
    Lie algebra, then $x^{[p]}\in\Delta_{n}(L)$;
  \item
    $\Delta(L)$  is a (restricted) ideal of $L$.
\end{enumerate}
\end{Lemma}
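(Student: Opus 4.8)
The plan is to translate the defining condition into linear algebra: because $[L,x]$ is precisely the image of the adjoint map $\ad x\colon L\to L$ (as a subspace the sign in $\ad x(\ell)=[x,\ell]$ is immaterial), the statement $x\in\Delta_n(L)$ says exactly that $\ad x$ has rank at most $n$. Under this dictionary, items 1), 2) and the ideal part of 5) reduce to elementary facts about ranks of linear maps, whereas items 3) and 4) each require one additional identity.

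Items 1) and 2) I would dispatch first. For 1), the space $[L,\alpha x]=\alpha[L,x]$ equals $[L,x]$ when $\alpha\neq0$ and is $0$ when $\alpha=0$, so the dimension never exceeds $n$. For 2), the inclusion $[L,\alpha x+\beta y]\subseteq[L,x]+[L,y]$ gives $\dim[L,\alpha x+\beta y]\le\dim[L,x]+\dim[L,y]\le n+m$.

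The first genuine computation is 3). For an arbitrary $\ell\in L$ I would expand, via the Jacobi identity, $[\ell,[x,y]]=[[\ell,x],y]+[x,[\ell,y]]$. The key observation is that each summand lives in a space of dimension at most $n$: the first summand lies in $[[L,x],y]$, the image under $\ad y$ of the at most $n$-dimensional space $[L,x]$; the second summand equals $-[[\ell,y],x]$ and hence lies in $[L,x]$ itself, since $[\ell,y]\in L$. Consequently $[L,[x,y]]\subseteq[[L,x],y]+[L,x]$ has dimension at most $2n$, so $[x,y]\in\Delta_{2n}(L)$. I expect the main subtlety to be exactly this recognition that the second Jacobi term folds back into $[L,x]$, contributing only $n$ rather than an a priori uncontrolled amount.

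For 4) I would invoke the restricted Lie algebra identity $\ad(x^{[p]})=(\ad x)^p$. Then $[L,x^{[p]}]=\operatorname{im}((\ad x)^p)\subseteq\operatorname{im}(\ad x)=[L,x]$, so the rank cannot increase and $x^{[p]}\in\Delta_n(L)$. Finally 5) assembles from the preceding parts: item 2) makes $\Delta(L)=\bigcup_{n\ge0}\Delta_n(L)$ a subspace, item 3) shows $[\Delta(L),L]\subseteq\Delta(L)$ so that it is an ideal, and in the restricted case item 4) shows $\Delta(L)$ is closed under the $p$-map, hence a restricted ideal.
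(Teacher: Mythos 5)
Your proposal is correct, and all five parts are argued soundly: the rank-of-$\ad x$ dictionary handles 1), 2) and 5), the Jacobi/derivation identity $[\ell,[x,y]]=[[\ell,x],y]+[x,[\ell,y]]$ with the observation that the second term folds back into $[L,x]$ gives the bound $2n$ in 3), and $\ad(x^{[p]})=(\ad x)^p$ gives 4). Note that the paper itself, being a survey, states this lemma without proof (citing \cite{BMPZ,Pe92}); your argument is essentially the standard one found in those sources, so there is nothing to flag.
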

\begin{Lemma}[\cite{RiSh93}]\label{DeltaI}
Let $L$ be a Lie algebra.
\begin{enumerate}
\item
     if $I$ is a finite dimensional ideal of $L$, then $\Delta(L/I)=(\Delta(L)+I)/I$;
\item
     if $H$ is a subalgebra of finite codimension in $L$, then $\Delta(H)=\Delta(L)\cap H$.
\end{enumerate}
\end{Lemma}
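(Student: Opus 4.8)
The plan is to reduce both parts to elementary facts about the image of the linear map $\ad x$, exploiting the two finiteness hypotheses to control either a kernel (Part~1) or a complement (Part~2). Throughout I use that $\Delta(L)$ is an ideal (Lemma~\ref{PropDel}(5)), so that the right-hand sides are honest subsets.

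For Part~1, write $\bar L=L/I$ and $\bar x=x+I$. Since $\ad\bar x$ is induced by $\ad x$ on the quotient, the subspace $[\bar L,\bar x]$ is the image of $[L,x]$ under the canonical projection, giving an isomorphism $[\bar L,\bar x]\cong[L,x]/([L,x]\cap I)$. First I would verify the easy inclusion: if $x=y+z$ with $y\in\Delta(L)$ and $z\in I$, then $\bar x=\bar y$ and $\dim[\bar L,\bar x]\le\dim[L,y]<\infty$, so $(\Delta(L)+I)/I\subseteq\Delta(\bar L)$; note this direction does not use $\dim I<\infty$. For the reverse inclusion I would use that $[L,x]\cap I$ is finite dimensional because $I$ is. Hence if $\dim[\bar L,\bar x]=\dim\big([L,x]/([L,x]\cap I)\big)$ is finite, then $\dim[L,x]$ is finite as well, so $x\in\Delta(L)$ and a fortiori $\bar x\in(\Delta(L)+I)/I$.

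For Part~2, for $x\in H$ one always has $[H,x]\subseteq[L,x]$, whence $\Delta(L)\cap H\subseteq\Delta(H)$ with no hypothesis on $H$. The content is the reverse inclusion. Here I would fix a finite-dimensional subspace $V$ with $L=H+V$, which exists because $H$ has finite codimension. Then $[L,x]=\ad x(H+V)=[H,x]+[V,x]$ and $\dim[V,x]\le\dim V<\infty$; so if $x\in\Delta(H)$, i.e. $\dim[H,x]<\infty$, then $\dim[L,x]<\infty$, whence $x\in\Delta(L)\cap H$.

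The computations are routine; the only step requiring genuine care is the reverse inclusion in Part~1, where one must ensure that finite dimensionality of the quotient $[L,x]/([L,x]\cap I)$ together with finite dimensionality of the kernel $[L,x]\cap I$ forces $[L,x]$ itself to be finite dimensional. This is exactly the point where the hypothesis $\dim I<\infty$ is essential, in contrast with the complement argument of Part~2, which instead relies on $\dim V<\infty$.
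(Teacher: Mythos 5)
Your proof is correct, and both finiteness hypotheses are used exactly where they are needed: the dimension count $\dim[L,x]=\dim([L,x]\cap I)+\dim\bigl([L,x]/([L,x]\cap I)\bigr)$ in Part~1, and the decomposition $[L,x]=[H,x]+[V,x]$ with $\dim V<\infty$ in Part~2. Note that the paper, being a survey, states this lemma as a citation from~\cite{RiSh93} without reproducing a proof; your elementary argument via the linear maps $\ad x$ is the standard one and matches the approach of the original source.
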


Suppose that $W$ is a subset in a $K$-vector space $V$.
We say that $W$ has finite codimension in $V$ if there exist
$v_1,\dots,v_m\in V$ such that
$V=\{w+\lambda_1v_1+\dots+\lambda_m v_m \,|\,w\in W,\ \lambda_1,\ldots, \lambda_m\in K\}$.
If $m$ is the minimum integer with such property, then we write $\dim V/W=m$.
We also introduce the notation
$m\cdot W=\{w_1+\cdots+w_m\,|\, w_i\in W\}$, where $m\in\N$.
\begin{Lemma}[{\cite[Lemma~6.3]{BaPe02}}]
\label{Ldilat}
Let $V$ be a $K$-vector space. Suppose that a subset
$T\subseteq V$ is stable under multiplication by scalars and $\dim V/T\le n$.
Then the linear span is obtained as: $\langle T\rangle_K=4^n\cdot T$.
\end{Lemma}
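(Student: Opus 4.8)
The plan is to establish the nontrivial inclusion $\langle T\rangle_K\subseteq 4^n\cdot T$, since the reverse inclusion is immediate: any finite sum of elements of $T$ lies in the subspace $\langle T\rangle_K$. I would argue by induction on $n$, after first reducing to the case where $T$ spans $V$. To make this reduction, set $U=\langle T\rangle_K$ and write a general $u\in U$ as $u=w+\lambda_1v_1+\dots+\lambda_nv_n$ with $w\in T$ (using $\dim V/T\le n$); since $w\in T\subseteq U$, the vector $u-w$ lies in $U\cap\langle v_1,\dots,v_n\rangle$, whence $U=T+\bigl(U\cap\langle v_1,\dots,v_n\rangle\bigr)$ and so $\dim U/T\le n$. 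Thus it is enough to prove the statement when $\langle T\rangle_K=V$, the general case then following by applying that special case to $T\subseteq U$.

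For the induction, the case $n=0$ is trivial: there $V=T$, so $\langle T\rangle_K=T=4^0\cdot T$. Assume the result for $n-1$, let $m=\dim V/T$ with $1\le m\le n$, and fix a minimal completing family $v_1,\dots,v_m$, so that $V=T+W$ with $W=\langle v_1,\dots,v_m\rangle$. The crux of the proof is the codimension-reduction
\[
\dim V/(4\cdot T)\le m-1 .
\]
Granting this, I apply the inductive hypothesis to the scalar-closed, spanning set $S=4\cdot T$ (legitimate since $\dim V/S\le m-1\le n-1$), obtaining $V=\langle S\rangle_K=4^{\,n-1}\cdot S=4^{\,n-1}\cdot(4\cdot T)=4^{\,n}\cdot T$, which closes the induction.

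The main obstacle is the displayed codimension drop. Writing $W_0=\langle v_2,\dots,v_m\rangle$, so that $\dim W_0\le m-1$, I must produce $V=4\cdot T+W_0$. The key observation is that $T$ cannot be additively absorbed by $W_0$: if $2\cdot T\subseteq T+W_0$, then a one-line induction gives $k\cdot T\subseteq T+W_0$ for every $k$, so $V=\langle T\rangle_K\subseteq T+W_0$, contradicting the minimality of $m$. Hence there are $s_1,s_2\in T$ with $u=s_1+s_2\in 2\cdot T$ but $u\notin T+W_0$; consequently its image $\bar u$ in $V/W_0$ is nonzero and avoids the image $\bar T$. Since $V/W_0=\bar T+K\bar v_1$ and $\bar u\notin\bar T$, the $\bar v_1$-component of $\bar u$ is a nonzero scalar, so $\bar v_1$ can be solved for in terms of $\bar u$ and an element of $\bar T$. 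Substituting this into a general element $\bar t+\lambda\bar v_1$ of $V/W_0$ and collecting terms, one lands in $2\cdot\bar T+K\bar u\subseteq 2\cdot\bar T+2\cdot\bar T=4\cdot\bar T$; lifting back through the quotient yields $V=4\cdot T+W_0$. The factor $4$ is precisely two doublings of $T$, and it is exactly this factor that propagates to the bound $4^n$; tracking it honestly is the delicate bookkeeping of the argument. Throughout, I rely only on the scalar-closure of $T$ (so that $\alpha t\in T$ for all $\alpha\in K$, and in particular $-T=T$) and on invertibility of nonzero scalars, so no hypothesis on $\ch K$ enters.
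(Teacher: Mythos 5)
Your proof is correct, and each of its steps checks out: the reduction to the spanning case $\langle T\rangle_K=V$ via $U=T+\bigl(U\cap\langle v_1,\dots,v_n\rangle\bigr)$ is legitimate; the identity $\langle T\rangle_K=\bigcup_{k\ge 1}k\cdot T$ (valid because scalar multiples stay inside $T$) is exactly what makes the minimality of $m$ contradict $2\cdot T\subseteq T+W_0$; the element $u=s_1+s_2\notin T+W_0$ then has an invertible coefficient at $\bar v_1$ in $V/W_0=\bar T+K\bar v_1$, which yields $V=4\cdot T+W_0$ and hence the codimension drop $\dim V/(4\cdot T)\le m-1$; and since $4\cdot T$ is again stable under scalars, the induction closes with $V=4^{n-1}\cdot(4\cdot T)=4^n\cdot T$. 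Note that the survey itself contains no proof of this lemma --- it is quoted from \cite[Lemma~6.3]{BaPe02} --- so there is nothing in the paper to compare against line by line; your argument is, however, a complete and self-contained proof, and it follows the natural strategy of the cited source: induct on the codimension, gaining one dimension per step at the cost of two doublings of $T$, which is precisely where the factor $4^n$ comes from. The only cosmetic omission is the degenerate case $m=0$ inside the inductive step (where $V=T$ and the claim is trivial), which you may want to mention for completeness.
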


We need a result on bilinear maps.
\begin{Th}[P.M. Neumann,~\cite{Ba}]
\label{TNeu}
Let $U,V,W$ be vector spaces over a field $K$ and
$\varphi:U\times V \to W$ a bilinear map. Suppose that for all
$u\in U$ and $v\in  V$, $\dim \varphi (u,V)\le m$ and
$\dim \varphi (U,v)\le l$. Then $\dim \langle\varphi (U,V)\rangle_K\leq ml$.
\end{Th}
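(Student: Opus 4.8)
The plan is to reformulate the statement as a question about the linear span of the image of $\varphi$, and then to produce a spanning set of cardinality at most $ml$ by a greedy, strictly increasing construction in which one of the two rank hypotheses controls the \emph{size} of each step while the other controls the \emph{number} of steps. First I would fix notation: put $S=\langle\varphi(U,V)\rangle_K$, and note that for each fixed $u\in U$ the map $v\mapsto\varphi(u,v)$ is $K$-linear, so $\varphi(u,V)$ is a subspace of $W$ with $\dim\varphi(u,V)\le m$; symmetrically each $\varphi(U,v)$ is a subspace with $\dim\varphi(U,v)\le l$. Bilinearity gives $S=\sum_{v\in V}\varphi(U,v)=\sum_{u\in U}\varphi(u,V)$, so the goal becomes $\dim S\le ml$. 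Since it suffices to prove this bound for every finite-dimensional subspace of $S$, and any such subspace is spanned by finitely many products $\varphi(u,v)$, I may replace $U,V,W$ by suitable finite-dimensional subspaces and thereby assume everything is finite dimensional.

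Next I build the spanning set greedily: having chosen $v_1,\dots,v_{j-1}\in V$, set $T_j=\varphi(U,v_1)+\cdots+\varphi(U,v_{j-1})$, and if $S\ne T_j$ pick $v_j$ with $\varphi(U,v_j)\not\subseteq T_j$, which is possible because the subspaces $\varphi(U,v)$ span $S$. Each step strictly enlarges the subspace while contributing at most $\dim\varphi(U,v_j)\le l$ new dimensions, and by finite-dimensionality the process halts after some $t$ steps with $S=T_{t+1}$, so that $\dim S\le tl$.

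It then remains to bound the number of steps by $t\le m$. For each $j\le t$, reduction modulo $T_j$ shows that the linear map $\theta_j\colon U\to W/T_j$, $u\mapsto\varphi(u,v_j)+T_j$, is nonzero. The crux is to find a \emph{single} element $u\in U$ with $\theta_j(u)\ne 0$ for every $j$ simultaneously: for such a $u$ one has $\varphi(u,v_j)\notin T_j\supseteq\langle\varphi(u,v_1),\dots,\varphi(u,v_{j-1})\rangle$ for all $j$, so the vectors $\varphi(u,v_1),\dots,\varphi(u,v_t)$ are linearly independent, whence $t\le\dim\varphi(u,V)\le m$ and finally $\dim S\le tl\le ml$.

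Producing this single $u$ is the step I expect to be the main obstacle, since it amounts to choosing $u$ outside the finite union $\bigcup_{j\le t}\ker\theta_j$ of \emph{proper} subspaces of $U$. Over an infinite field this is immediate, because a vector space over an infinite field is never a finite union of proper subspaces. Over a finite field this can genuinely fail, so a separate argument is needed; here I would exploit the freedom already gained by passing to finite dimensions and argue by induction on $t$, at each stage replacing the current witness $u$ by a suitable combination $u+\lambda u'$ and using \emph{both} rank bounds to preserve independence of the relevant images, ultimately reducing to the field-independent rank manipulations that settle the base case $m=l=1$.
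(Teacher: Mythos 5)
The paper itself contains no proof of this statement---it is quoted from Bahturin's book [Ba]---so your proposal has to be judged on its own merits. Its first three steps are correct: the reduction to finite-dimensional $U,V,W$, the greedy chain $T_1\subseteq T_2\subseteq\cdots$ giving $\dim S\le tl$, and the implication ``if a single $u$ with $\theta_j(u)\ne0$ for all $j$ exists, then $t\le m$ and $\dim S\le ml$.'' Over an infinite field the witness exists and your proof is complete. But over a finite field the gap you flag at the end is genuine and, worse, not fixable in the form you propose: the intermediate statement your witness is meant to certify---that every greedy chain stops after at most $m$ steps---is \emph{false}. Take $K=\mathbb{F}_2$, $U=K^2$, $V=W=K^3$ with standard basis $e_1,e_2,e_3$, and
\[
\varphi\bigl((x,y),(a,b,c)\bigr)=\bigl(xa,\;yb,\;(x+y)c\bigr).
\]
Then $\dim\varphi(u,V)\le 2=m$ for every $u\in U$ (the matrix $\mathrm{diag}(x,y,x+y)$ always has a zero entry when $x,y\in\mathbb{F}_2$) and $\dim\varphi(U,v)\le 2=l$ for every $v\in V$. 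The choices $v_1=e_1$, $v_2=e_2$, $v_3=e_3$ are legitimate greedy steps, since $\varphi(U,e_j)=\langle e_j\rangle_K\not\subseteq T_j$ at each stage; so $t=3>m=2$, and your chain of inequalities yields only $\dim S\le tl=6$, not the required $ml=4$. Accordingly no simultaneous witness can exist, and indeed $\Ker\theta_1=\langle(0,1)\rangle_K$, $\Ker\theta_2=\langle(1,0)\rangle_K$, $\Ker\theta_3=\langle(1,1)\rangle_K$ together cover all of $U$.

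Because what fails is the statement itself and not merely the covering argument, the repair you sketch for finite fields (induction on $t$, replacing the witness by $u+\lambda u'$) cannot succeed as described: it keeps the chain $v_1,\dots,v_t$ fixed and still aims at proving $t\le m$ for that chain. Over $\mathbb{F}_2$ there are no spare scalars, and the example above closes every such route. A correct proof over an arbitrary field---which is the case this survey actually needs, since its ground fields have characteristic $p>0$ and may well be finite---must control how the $v_j$ themselves are chosen, not only how the witness $u$ is chosen. (In the example, the cleverer chain $v_1=(1,1,1)$, $v_2=e_1$ reaches $S$ in $t=2\le m$ steps; nothing in your construction finds such chains.) That is precisely the nontrivial content of Neumann's theorem and of the argument in the cited source; as written, your proposal establishes the theorem only for infinite $K$.
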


The following facts were indispensable in our approach to study Poisson identical
relations in symmetric algebras of Lie algebras.
Actually the following result was proved for restricted Lie algebras~\cite{GiPe06},
but its proof remains valid for truncated symmetric algebras as well.
\begin{Th}[\cite{GiPe06}]
\label{TtrPoisson}
Let $L$ be a Lie algebra.
Suppose that the symmetric algebra $S(L)$
(or the truncated symmetric algebra $\SSS(L)$) satisfies a multilinear Poisson identity.
Then there exist integers $n,N$ such that $\dim L/\Delta_N(L)<n$.
\end{Th}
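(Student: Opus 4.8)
The plan is to reduce, via Farkas' theorem, to a \emph{customary} identity, substitute elements of $L$, and then exploit that the (truncated) symmetric algebra is a (truncated) commutative polynomial ring, so that a well-chosen substitution forces a single monomial to survive and contradicts the identity. First I would invoke Theorem~\ref{TFarkas}: a nontrivial multilinear Poisson identity yields a nontrivial customary identity, which I rewrite as a sum over the perfect matchings $M$ of $\{1,\dots,2n\}$,
$$
\sum_{M}\mu_M\prod_{\{i,j\}\in M,\ i<j}\{x_i,x_j\}\equiv 0,\qquad \mu_{M_0}=1,
$$
where $M_0=\{\{1,2\},\{3,4\},\dots,\{2n-1,2n\}\}$ is the ``diagonal'' matching. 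Substituting $x_1,\dots,x_{2n}\in L$ and recalling that $\{a,b\}=[a,b]\in L$ for $a,b\in L$, this becomes a relation in the commutative ring $S(L)=K[v_i]$ (respectively in the truncated ring $\SSS(L)=K[v_i]/(v_i^p)$),
$$
\sum_M\mu_M\prod_{\{i,j\}\in M,\ i<j}[x_i,x_j]=0,
$$
in which each summand is a product of $n$ elements of $L$, i.e. a homogeneous polynomial of degree $n$.

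The core is then a contradiction. Assume, contrary to the claim, that $\dim L/\Delta_N(L)=\infty$ for every $N$. I aim to produce elements $a_1,b_1,\dots,a_n,b_n\in L$, playing the roles $x_{2k-1}=a_k,\ x_{2k}=b_k$, such that the diagonal brackets $c_k:=[a_k,b_k]$ are linearly independent in $L$ while every off-diagonal bracket vanishes, i.e. $[a_k,a_l]=[a_k,b_l]=[b_k,b_l]=0$ for $k\ne l$. Granting such a configuration, every matching $M\ne M_0$ contains a pair $\{i,j\}$ whose indices come from distinct chosen pairs, so $[x_i,x_j]=0$ and that whole summand vanishes; only $M_0$ survives and the relation collapses to $c_1c_2\cdots c_n=0$. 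But the $c_k$ are linearly independent, so after completing them to a basis the product $c_1\cdots c_n$ is a nonzero squarefree monomial, which is nonzero both in $K[v_i]$ and in $K[v_i]/(v_i^p)$ (each exponent equals $1<p$). This contradiction finishes the proof.

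It then remains to construct the configuration, and this is the technical heart. I would build the pairs inductively, forcing each new pair to centralize all previously chosen elements. Two facts drive the construction. First, under the standing assumption every subalgebra $H$ of finite codimension satisfies $\dim H^2=\infty$: otherwise, for $x\in H$ one would have $\dim[L,x]\le \dim H^2+\dim(L/H)=:N_0$, whence $H\subseteq\Delta_{N_0}(L)$ and $\dim L/\Delta_{N_0}(L)\le\dim(L/H)<\infty$, contradicting the assumption. Second, Lemma~\ref{DeltaI} relates the delta-sets of $L$ to those of its finite-codimension subalgebras. Having chosen $a_1,b_1,\dots,a_r,b_r$ of finite width with $c_1,\dots,c_r$ independent and all cross brackets zero, I form $H_r=\bigcap_{j\le r}\bigl(C_L(a_j)\cap C_L(b_j)\bigr)$; since each chosen element has finite width, each centralizer, and hence $H_r$, has finite codimension. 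By the first fact $\dim H_r^2=\infty$, so a single bracket $c_{r+1}=[a_{r+1},b_{r+1}]$ with $a_{r+1},b_{r+1}\in H_r$ can be chosen outside $\langle c_1,\dots,c_r\rangle$, and by construction it commutes with all earlier elements.

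The main obstacle is precisely to guarantee at every stage that the fresh elements can be taken of \emph{finite width}, so that their centralizers stay of finite codimension and the induction does not stall; when $L$ has elements of infinite width (for instance when $\Delta(L)$ is a proper, possibly abelian, ideal) this is not automatic and the naive centralizer argument breaks down. Overcoming it requires the finer properties of delta-sets in Lemma~\ref{PropDel} and Lemma~\ref{DeltaI}, together with P.M.~Neumann's theorem (Theorem~\ref{TNeu}) to control the dimensions of the bracket images $[L,x]$ and of $\langle [L,L]\rangle_K$ as one descends to the subalgebras $H_r$. This is the step carried out in~\cite{GiPe06}, in the spirit of Passman's and Bahturin's treatment of group rings and enveloping algebras.
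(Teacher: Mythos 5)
Your first two steps --- the reduction to a customary identity via Theorem~\ref{TFarkas}, and the observation that it suffices to exhibit a substitution from $L$ in which the diagonal matching $M_0$ survives and all other matchings are harmless --- are correct and are indeed how this theorem is approached; your ``first fact'' (every subalgebra of finite codimension has infinite-dimensional commutator subalgebra under the standing assumption) is also correct. The fatal problem is the configuration you then try to build: pairs $a_k,b_k$ with $c_k=[a_k,b_k]$ linearly independent and \emph{all} cross brackets exactly zero. Such a configuration need not exist under the standing assumption, so no refinement of your centralizer induction can produce it. Concretely, let $L$ be the free nilpotent Lie algebra of class $2$ on generators $x_1,x_2,\dots$, so that the elements $z_{ij}=[x_i,x_j]$, $i<j$, form a basis of the central subalgebra $Z=L^2$. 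For $u,v$ in the span $X$ of the $x_i$ one has $[u,v]=0$ if and only if $u$ and $v$ are proportional; consequently every element with nonzero $X$-component has infinite width, whence $\Delta_N(L)=Z$ for all $N$ and $\dim L/\Delta_N(L)=\infty$: the standing assumption holds. Yet if $c_1=[a_1,b_1]\neq 0$ (so the $X$-components of $a_1,b_1$ are non-proportional) and $a_2,b_2$ commute with both $a_1$ and $b_1$, then the $X$-component of $a_2$ is proportional to both of those of $a_1,b_1$, hence zero, forcing $c_2=[a_2,b_2]=0$. So already for $n=2$ your target configuration is unattainable in an algebra to which the theorem certainly applies.

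The example also shows what the correct, weaker, sufficient condition is: one does not need the off-diagonal brackets to vanish, only that the monomial $c_1\cdots c_n$ cannot reappear when the products attached to the other matchings are expanded in a suitable basis of $L$. In the example the natural choice $a_k=x_{2k-1}$, $b_k=x_{2k}$ achieves this: every matching yields a product of distinct basis vectors $z_{ij}$, distinct matchings yield distinct monomials, and the customary identity with $\mu_{M_0}=1$ fails. This ``general position'' of the brackets is exactly what the proof in~\cite{GiPe06} arranges, by a dimension count with the delta-sets (if $x\notin\Delta_N(L)$ then $\dim[L,x]>N$, so a bracket $[x,y]$ can be chosen avoiding any prescribed subspace of dimension at most $N$), supported by Lemma~\ref{PropDel}, Lemma~\ref{Ldilat} and Theorem~\ref{TNeu}. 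Finally, note that your last paragraph concedes that the key technical step is ``the step carried out in~\cite{GiPe06}'': since that step is the actual content of the theorem being proved, the argument as written is circular at its heart, and, as the example above shows, it is moreover committed to a construction that cannot be carried out in general.
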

It yields the following reduction step, which is actually contained in~\cite{GiPe06}.
\begin{Th}[\cite{GiPe06}]
\label{Treduction}
Let $L$ be a Lie algebra such that
the symmetric algebra $S(L)$
(or the truncated symmetric algebra $\SSS(L)$) satisfies a multilinear Poisson identity.
Let $\Delta=\Delta(L)$.
Then there exist integers $n,M$ such that
\begin{enumerate}
\item $\Delta=\Delta_M(L)$;
\item $\dim L/\Delta<n$;
\item $\dim \Delta^2\le M^2 $.
\end{enumerate}
\end{Th}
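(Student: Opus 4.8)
The plan is to treat Theorem~\ref{TtrPoisson} as the engine that does the hard analytic work, and then to extract the three assertions by bookkeeping the finite widths of delta-sets. By Theorem~\ref{TtrPoisson} there are integers $n,N$ with $\dim L/\Delta_N(L)<n$. Assertion 2) is then immediate: since $\Delta_N(L)\subseteq\Delta(L)=\Delta$, we get $\dim L/\Delta\le\dim L/\Delta_N(L)<n$, so the same $n$ works.

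The core difficulty is assertion 1), the stabilization $\Delta=\Delta_M(L)$, i.e. the fact that the ascending union $\Delta=\bigcup_m\Delta_m(L)$ actually terminates. First I would pass from $\Delta_N(L)$ to its linear span. Since $\Delta_N(L)$ is stable under scalar multiplication by part 1) of Lemma~\ref{PropDel} and has codimension at most $n-1$, the dilation Lemma~\ref{Ldilat} gives $\langle\Delta_N(L)\rangle_K=4^{n-1}\cdot\Delta_N(L)$. Each element of the right-hand side is a sum of $4^{n-1}$ elements of $\Delta_N(L)$, so iterating the additivity property 2) of Lemma~\ref{PropDel} places it in $\Delta_{M_0}(L)$ with $M_0=4^{n-1}N$; hence the finite-codimension \emph{subspace} $V:=\langle\Delta_N(L)\rangle_K$ satisfies $V\subseteq\Delta_{M_0}(L)\subseteq\Delta$. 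Next I would absorb the remaining finite-dimensional part: since $\dim\Delta/V\le\dim L/V\le n-1$, choose lifts $w_1,\dots,w_k\in\Delta$ with $k\le n-1$ of a basis of $\Delta/V$, pick $m_i$ with $w_i\in\Delta_{m_i}(L)$, and set $m^*=\max_i m_i$, so that all $w_i\in\Delta_{m^*}(L)$. Then every $z\in\Delta$ writes as $z=v+\lambda_1 w_1+\dots+\lambda_k w_k$ with $v\in V$, and parts 1) and 2) of Lemma~\ref{PropDel} place $z$ in $\Delta_M(L)$ for $M:=M_0+k\,m^*$. Thus $\Delta\subseteq\Delta_M(L)\subseteq\Delta$, giving $\Delta=\Delta_M(L)$.

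Finally, assertion 3) follows from P.M.\ Neumann's Theorem~\ref{TNeu} applied to the bracket. Consider the bilinear map $\varphi\colon\Delta\times\Delta\to L$, $\varphi(x,y)=[x,y]$. Because $\Delta=\Delta_M(L)$, every $x\in\Delta$ has $\dim[L,x]\le M$, so the partial images satisfy $\dim\varphi(\Delta,y)\le\dim[L,y]\le M$ and, by antisymmetry of the bracket, $\dim\varphi(x,\Delta)\le M$. Neumann's theorem then yields $\dim\Delta^2=\dim\langle\varphi(\Delta,\Delta)\rangle_K\le M\cdot M=M^2$, as required. The only genuinely delicate point is the stabilization step, where one must control how the width grows under both linear spanning (via the dilation lemma) and the passage to the ideal $\Delta$ of finite codimension; the rest is a direct transfer of Passman's group-ring delta-set techniques into the Lie setting.
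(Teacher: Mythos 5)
Your proposal is correct and assembles exactly the tools the survey lines up for this reduction: Theorem~\ref{TtrPoisson} to produce $n,N$; Lemma~\ref{Ldilat} together with parts 1) and 2) of Lemma~\ref{PropDel} to get $\langle\Delta_N(L)\rangle_K\subseteq\Delta_{4^{n-1}N}(L)$ and then, by absorbing finitely many coset representatives of $\Delta$ modulo this span (each of finite width), the stabilization $\Delta=\Delta_M(L)$; and P.~M.~Neumann's Theorem~\ref{TNeu} applied to the bracket on $\Delta\times\Delta$ for $\dim\Delta^2\le M^2$. This is essentially the same argument as in~\cite{GiPe06}, which the paper cites rather than reproduces, and I see no gaps in your version.
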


\subsection{Products of commutators in Poisson algebras}\label{Scomm}
Now, we supply technical results on products of commutators in Poisson algebras  that were used to get a lower bound on
the Lie nilpotency class of $\SSS(L)$.

Products of terms of the lower central series for {\it associative algebras} appear in works of many mathematicians,
the results being reproved without knowing the earlier works.
We do not pretend to make a complete survey here.
Probably, the first observations on products of commutators in associative algebras were
made by Latyshev in 1965~\cite{Lat65} and Volichenko in 1978~\cite{Vol78}.
There are further works, see e.g.~\cite{BapJor13,EtiKimMa09,Gor07,GriPche15,Kras13}.

In case of associative algebras,
Claim~1 of Theorem~\ref{Tcomutt}, probably, first was established by
Sharma-Shrivastava in 1990,~\cite[Theorem~2.8]{ShaSri90}.
As was remarked in~\cite{RiSh95},
the proof of the associative version of Claim~2 of Theorem~\ref{Tcomutt} is implicitly contained in~\cite{ShaSri90},
where it is proved for group rings.
A weaker statement (the associative version of~Lemma~\ref{Lgamma-2}) is established by
Gupta and Levin~\cite[Theorem~3.2]{GupLev83}.

The following statement is a Poisson version of respective results for associative algebras.
The validity of it is not automatically clear and it was checked directly,
following a neat approach due to Krasilnikov~\cite{Kras13}.

\begin{Th}[\cite{PeIlana17}]\label{Tcomutt}
Let $R$ be a Poisson algebra over a field $K$, $\ch K\neq 2,3$.
\begin{enumerate}
\item Suppose that one of integers $n,m\geq 1$ is odd, then
     $$\gamma_n(R)\cdot\gamma_m(R)\subseteq\gamma_{n+m-1}(R) R.$$
\item For all $x_1,\ldots,x_n\in R,$ $n,m\geq 1$, we have
     $$\{x_1,\ldots,x_n\}^m\in\gamma_{(n-1)m+1}(R) R.$$
\end{enumerate}
\end{Th}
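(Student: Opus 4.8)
The plan is to reduce both claims to \emph{left-normed} generators and then grind the containments out of the Leibnitz rule by induction, arguing throughout modulo the associative ideal $\gamma_k(R)R$. Since $\gamma_n(R)$ is spanned by the commutators $\{x_1,\ldots,x_n\}$ with $x_i\in R$, multilinearity lets us test everything on such generators. The single workhorse is the \emph{trading identity} coming from the Leibnitz rule: for any $u,w,z\in R$,
$$u\{w,z\}=\{uw,z\}-w\{u,z\},$$
which turns a commutative product into a Poisson bracket, raising the commutator degree of one factor at the cost of a correction term of the \emph{same total degree} in which the bracket has landed on the other factor. Two easy facts furnish the scaffolding: first, because $\gamma_m(R)\subseteq R$, one has the trivial containment $\gamma_n(R)\gamma_m(R)\subseteq\gamma_{\max(n,m)}(R)R$, which already disposes of the cases $\min(n,m)\le 2$ and provides the induction base; second, the Jacobi identity lets one reorganise nested brackets, and here the hypothesis $\ch K\neq 2,3$ makes the symmetrisation coefficients that appear invertible. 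I would first establish, by the same telescoping but with one unit less room, the parity-free scaffold $\gamma_n(R)\gamma_m(R)\subseteq\gamma_{n+m-2}(R)R$ of Lemma~\ref{Lgamma-2}.

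For Claim~1 I would run a double induction (on $n+m$, and within it on $\min(n,m)$), repeatedly applying the trading identity to push the larger index upward, climbing from the trivial bound $\gamma_{\max(n,m)}(R)R$ to the asserted $\gamma_{n+m-1}(R)R$. Each trading step moves one unit of degree between the two factors, producing two correction terms whose index pairs differ from $(n,m)$ by one unit. The parity hypothesis ``one of $n,m$ is odd'' is used to \emph{steer} each application --- by choosing which factor to expand --- so that the correction terms stay out of the forbidden even--even products $\gamma_{2a}(R)\gamma_{2b}(R)$ (the only configuration the statement does not cover) as far as possible. The residual even--even terms that cannot be avoided are exactly what the Jacobi-based symmetrisation is designed to absorb: this is the Poisson form of Krasilnikov's rewriting, and the step where $\ch K\neq 2,3$ is indispensable.

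Claim~2 I would deduce from Claim~1 by reducing to a power $c^m$ of a single left-normed commutator $c=\{x_1,\ldots,x_n\}\in\gamma_n(R)$ and inducting on $m$. The one genuinely new input is the square $c^2\in\gamma_{2n-1}(R)R$, which does \emph{not} follow from Claim~1 when $n$ is even (then $\gamma_n\gamma_n$ is an uncovered even--even product, while for odd $n$ it is immediate from Claim~1). It is recovered for the specific square by writing $c=\{a,x_n\}$ with $a=\{x_1,\ldots,x_{n-1}\}$ and applying the trading identity:
$$c^2=\{ac,x_n\}-a\{c,x_n\},$$
where the two pieces now carry the index pairs $(n-1,n)$ and $(n-1,n+1)$, each containing an odd index when $n$ is even, so Claim~1 applies. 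For higher $m$ I would split $c^m=c^2\cdot c^{m-2}$ (when $n$ is even) or peel off a single factor (when $n$ is odd), always arranging an odd index in the split, and multiply using the ideal consequence $\gamma_k(R)R\cdot\gamma_l(R)R\subseteq\gamma_{k+l-1}(R)R$ of Claim~1; the exponent arithmetic $(n-1)m+1$ then closes exactly.

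The hard part will be the even--even correction terms, which the Leibnitz telescoping keeps regenerating: for Claim~1 they must be absorbed by the Jacobi-based symmetrisation valid only when $\ch K\neq 2,3$, and for Claim~2 they are bypassed only because the square identity routes the computation through the neighbouring \emph{odd} indices $n-1,n+1$. A secondary but real point, already flagged in the text, is that the associative versions may not simply be quoted: every rewriting has to be re-derived inside the Poisson algebra, checking that the Leibnitz corrections genuinely close up rather than leaking degree.
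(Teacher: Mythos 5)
This survey does not prove Theorem~\ref{Tcomutt}; it quotes it from~\cite{PeIlana17}, noting only that the result ``was checked directly, following a neat approach due to Krasilnikov~\cite{Kras13}.'' Measured against what such a proof must contain, your proposal has a genuine gap in each claim. For Claim~1, what you offer is a plan whose core step is named but never performed: everything difficult is delegated to a ``Jacobi-based symmetrisation'' that is supposed to absorb the even--even correction terms, yet that symmetrisation \emph{is} the theorem. Moreover, the even--even terms cannot be ``steered around'': already in the decisive base case $\gamma_2(R)\gamma_3(R)\subseteq\gamma_4(R)R$, the trading identity applied to $u\{w,x\}$ (with $u,w\in\gamma_2(R)$) gives $\{uw,x\}-w\{u,x\}$, whose first term requires understanding $uw\in\gamma_2(R)\gamma_2(R)$ (even--even) and whose second term is again of type $\gamma_2\gamma_3$; so trading is either circular or regenerates the forbidden configuration. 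The known way through is to derive, from the exact Leibnitz consequence $\{z,w\}\{y,t\}+\{y,w\}\{z,t\}=\{\{yz,w\},t\}-y\{\{z,w\},t\}-z\{\{y,w\},t\}$ and the Jacobi identity, explicit relations carrying the integer coefficients $2$ and $3$ in front of the products one wants to kill, and then divide; this is precisely where $\ch K\neq 2,3$ enters, and it is unavoidable, since over $\mathbb{Z}$ one can only prove $3[a,b][c,d,e]\in\gamma_4(R)R$ --- the $3$-torsion that Krasilnikov's paper is about. None of these identities, nor the factors $2$ and $3$, appear anywhere in your text, so Claim~1 is not proved.

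For Claim~2 there is a concrete false step. In $c^2=\{ac,x_n\}-a\{c,x_n\}$ (with $c=\{a,x_n\}$, $a\in\gamma_{n-1}(R)$, $n$ even), the second piece is fine, but for the first piece ``Claim~1 applies'' is wrong: Claim~1 only gives $ac\in\gamma_{2n-2}(R)R$, and you still must push the outer bracket inside this ideal. Leibnitz gives $\{vr,x_n\}=v\{r,x_n\}+r\{v,x_n\}$ for $v\in\gamma_{2n-2}(R)$, $r\in R$, and the term $v\{r,x_n\}$ lies in $\gamma_{2n-2}(R)\gamma_2(R)$ --- an even--even product of exactly the type excluded from Claim~1 (since $n$ is even, $2n-2$ is even). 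So your exponent arithmetic does not close. A repair needs an extra re-bracketing argument in which the deep factor is bracketed \emph{before} the product is formed; for instance, keeping track that the relevant elements of $\gamma_{2n-2}(R)$ have the form $\{v,x\}$ with $v\in\gamma_{2n-3}(R)$, the exact identity
\begin{equation*}
-\{v,x\}\{r,x\}=\{\{xr,v\},x\}-x\{\{r,v\},x\}-r\{\{x,v\},x\}
\end{equation*}
puts every right-hand term in $\gamma_{2n-1}(R)R$ because each bracket reaches $v$ first. Some maneuver of this kind (or a direct induction not using Claim~1 as a black box, as in~\cite{PeIlana17}) is indispensable, and your proposal does not contain it; as written, both the ``one genuinely new input'' $c^2\in\gamma_{2n-1}(R)R$ and the induction step resting on it are unproved.
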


The following is an analogue of a result for associative algebras, see~\cite[Theorem~3.2]{GupLev83}.
It is weaker than Claim~1 of Theorem~\ref{Tcomutt}, but it is valid for an arbitrary characteristic.
\begin{Lemma}[\cite{PeIlana17}]\label{Lgamma-2}
Let $R$ be a Poisson algebra over arbitrary field $K$. Then
$$
\gamma_m(R)\gamma_n(R)\subseteq \gamma_{m+n-2}(R)R, \qquad n,m\ge 2.
$$
\end{Lemma}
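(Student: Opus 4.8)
The plan is to reduce to spanning elements and then mirror the proof of the associative statement \cite{GupLev83}, translating every use of the commutator--derivation identity $[ab,c]=a[b,c]+[a,c]b$ into the Poisson Leibniz rule $\{ab,c\}=a\{b,c\}+b\{a,c\}$. Since the associative product is commutative, $\gamma_m(R)\gamma_n(R)=\gamma_n(R)\gamma_m(R)$ and the claim is symmetric in $m,n$; moreover it is trivial when $m=2$ or $n=2$, because $\gamma_2(R)\subseteq R$ gives $\gamma_2(R)\gamma_n(R)\subseteq R\,\gamma_n(R)=\gamma_n(R)R$. So I would prove, by induction on $n\ge 3$ with the statement quantified over all $m\ge 2$, that the product $uv$ of left-normed brackets $u\in\gamma_m(R)$ and $v\in\gamma_n(R)$ (which span these spaces) lies in $\gamma_{m+n-2}(R)R$. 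Throughout I would freely use that $\gamma_k(R)R$ is a Poisson ideal (both $\gamma_k(R)R\cdot R\subseteq\gamma_k(R)R$ and $\{\gamma_k(R)R,R\}\subseteq\gamma_k(R)R$ follow from the Leibniz rule), together with the Lie-theoretic inclusion $\{\gamma_a(R),\gamma_b(R)\}\subseteq\gamma_{a+b}(R)$.

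For the inductive step write $v=\{v',y\}$ with $v'\in\gamma_{n-1}(R)$ and $y\in R$. Applying the Leibniz rule in the form $\{uv',y\}=\{u,y\}v'+u\{v',y\}$ and using $u\{v',y\}=uv$ yields the key reduction $uv=\{uv',y\}-\{u,y\}v'$. The second summand is immediate: $\{u,y\}\in\gamma_{m+1}(R)$ and $v'\in\gamma_{n-1}(R)$, so the inductive hypothesis applied to the pair $(m+1,n-1)$ gives $\{u,y\}v'\in\gamma_{m+n-2}(R)R$. For the first summand I would invoke the inductive hypothesis on $uv'\in\gamma_m(R)\gamma_{n-1}(R)\subseteq\gamma_{m+n-3}(R)R$, write $uv'=\sum_j c_j r_j$ with $c_j\in\gamma_{m+n-3}(R)$ and $r_j\in R$, and expand $\{uv',y\}=\sum_j\{c_j,y\}r_j+\sum_j c_j\{r_j,y\}$. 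The first sum lies in $\gamma_{m+n-2}(R)R$, since $\{c_j,y\}\in\gamma_{m+n-2}(R)$.

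The hard part is the residual sum $\sum_j c_j\{r_j,y\}$, which a priori lies only in $\gamma_{m+n-3}(R)\gamma_2(R)\subseteq\gamma_{m+n-3}(R)R$, one level short of the target, and which is genuinely \emph{not} absorbed into $\gamma_{m+n-2}(R)R$ for an unstructured element of $\gamma_{m+n-3}(R)R$. The naive Leibniz manipulations are powerless here: every attempt to re-evaluate $\{uv',y\}$ by splitting off a single bracket merely reproduces $uv$, and the coarse filtration by the number of bracketings places products of commutators too deep to detect the cancellation. What saves the argument is the extra information that these summands originate from $\gamma_m(R)\gamma_{n-1}(R)$ rather than from an arbitrary element of $\gamma_{m+n-3}(R)R$. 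To exploit it I would establish, by a finer simultaneous induction that keeps the factors $u$ and $v'$ intact, the Poisson analogue of the commutator identity of \cite{GupLev83}: one rewrites $\sum_j c_j\{r_j,y\}$ through repeated Leibniz expansions together with the Jacobi identity so that each resulting term either acquires an extra bracket (hence lands in $\gamma_{m+n-2}(R)$) or is recognised as a product over a strictly smaller pair to which the induction already applies. This is precisely the delicate bookkeeping to be carried out directly, following Krasilnikov's approach \cite{Kras13}, and it is the sole obstacle; once it is in place the induction closes and yields $\gamma_m(R)\gamma_n(R)\subseteq\gamma_{m+n-2}(R)R$ for all $n,m\ge 2$ over an arbitrary field.
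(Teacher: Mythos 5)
Your reduction is sound as far as it goes, but it stops exactly where the lemma begins. The base cases, the observation that $\gamma_k(R)R$ is a Poisson ideal, the decomposition $uv=\{uv',y\}-v'\{u,y\}$, and the treatment of $v'\{u,y\}$ via the pair $(m+1,n-1)$ are all correct and elementary. The entire content of the statement is concentrated in the residual sum $\sum_j c_j\{r_j,y\}$, and there you give no argument: you assert that a ``Poisson analogue of the commutator identity of \cite{GupLev83}'' can be established by repeated Leibniz expansions, the Jacobi identity and ``delicate bookkeeping,'' but you never write down that identity nor carry out the bookkeeping. This is a genuine gap, not a verification left to the reader, and the strategy you sketch cannot close it as described: Leibniz, Jacobi and commutativity all preserve the total number of Poisson brackets, so they only convert a product attached to the pair $(m,n)$ into products attached to pairs of the same total weight $m+n$, plus brackets of elements of $\gamma_a(R)\gamma_b(R)$ with $a+b=m+n-1$; the inductive hypothesis for smaller weight then always lands one step short, in $\gamma_{m+n-3}(R)R$ instead of $\gamma_{m+n-2}(R)R$. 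Concretely, controlling $\{\gamma_m(R)\gamma_{n-1}(R),R\}$ by Leibniz requires controlling both $\gamma_{m+1}(R)\gamma_{n-1}(R)$ \emph{and} $\gamma_m(R)\gamma_n(R)$ --- the latter being the very goal. So your claim that each resulting term ``is recognised as a product over a strictly smaller pair'' is false: the pairs never drop in total weight, and within a fixed weight the recursion is circular.

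What is actually needed is a relation that genuinely gains a bracket, and this is what the proof in \cite{PeIlana17} supplies (the present survey does not reproduce it; it stresses before Theorem~\ref{Tcomutt} that the validity of these product-of-commutator statements is not automatic and was checked directly following Krasilnikov \cite{Kras13}). The key ingredient is the Poisson form of Latyshev's lemma, i.e.\ the customarization relation $\{a,b\}\{c,d\}+\{a,d\}\{c,b\}\in\gamma_3(R)R$, in a refined version: if $u\in\gamma_a(R)$ and $v\in\gamma_b(R)$, then $\{u,c\}\{v,f\}+\{v,c\}\{u,f\}\in\gamma_{a+b+1}(R)R$, because after rewriting $\{u,c\}\{v,f\}=\{u,c\{v,f\}\}-c\{u,\{v,f\}\}$ and $c\{v,f\}=\{v,cf\}-f\{v,c\}$, every correction term involves $\{\gamma_a(R),\gamma_{b+1}(R)\}\subseteq\gamma_{a+b+1}(R)$. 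It is these swap relations, applied to both factors of $uv$ and combined with Jacobi, that force products of two left-normed brackets into $\gamma_{m+n-2}(R)R$ --- something no degree-preserving induction can do. Even granted the swap relations, care is still required: their most direct use yields statements such as $3\,\gamma_3(R)\gamma_3(R)\subseteq\gamma_4(R)R$ (Jacobi contributes a factor $3$ because $3$-cycles are even), so obtaining the characteristic-free claim of the lemma, valid also in characteristics $2$ and $3$, needs the argument organized as in Gupta--Levin and \cite{PeIlana17}. Until this part is actually carried out, your proposal proves nothing beyond the trivial cases $m=2$ or $n=2$.
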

\subsection{Solvability of symmetric algebras $\SSS(L)$ and $S(L)$ in case $\ch K=2$}
We supply two examples of truncated symmetric algebras that are solvable but not strongly solvable.
\begin{Lemma}[\cite{PeIlana17}]\label{L22}
Let $L=\langle x, y_i \,|\,  [x,y_i]=y_i,\, i\in\N\rangle_K$, $\ch K=2$,
the remaining commutators being trivial. Then
\begin{enumerate}
\item $L^2=\Delta(L)=\Delta_1(L)=\langle y_i\,|\,  i\in\N\rangle$;
\item $\SSS(L)$ is solvable of length 3;
\item $\SSS(L)$ is not strongly solvable.
\end{enumerate}
\end{Lemma}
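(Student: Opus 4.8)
The plan is to reduce everything to an explicit formula for the Poisson bracket on $\SSS(L)$, after disposing of Part~1 by a direct computation on the basis. Since $[x,y_i]=y_i$ and all brackets among the $y_i$ vanish, $L^2=\langle y_i\mid i\in\N\rangle=:Y$; every $z\in Y$ satisfies $[L,z]=\langle z\rangle$, so $Y\subseteq\Delta_1(L)$, whereas any $z=\alpha x+w$ with $w\in Y$ and $\alpha\ne 0$ has $[L,z]\supseteq\langle [y_j,z]\rangle_j=\langle \alpha y_j\rangle_j=Y$, which is infinite dimensional, so $z\notin\Delta(L)$. Hence $L^2=\Delta(L)=\Delta_1(L)=Y$ at once. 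For the setup I write $\SSS(L)=P\oplus xP$ with $P=K[y_i\mid i\in\N]/(y_i^2)$; because $p=2$ the $y$-degree operator $E=\sum_i y_i\partial_{y_i}$ equals the projection $\pi_o$ of $P$ onto its odd-degree part, while $\{x,-\}$ acts as $E$ and $\{y_i,-\}=-y_i\partial_x$. This yields, for $f=f_0+xf_1$, $g=g_0+xg_1$ with $f_j,g_j\in P$, the closed formula $\{f,g\}=(\partial_x f)(Eg)-(Ef)(\partial_x g)=f_1\,\pi_o(g_0)-\pi_o(f_0)\,g_1+x\big(f_1^e g_1^o-f_1^o g_1^e\big)$, where superscripts $e,o$ denote even/odd $y$-degree parts. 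I would verify this identity once by the Leibniz rule; it is the workhorse for Parts~2 and~3.

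For Part~2 I compute the derived series directly from the formula. Writing $P^+$ for the augmentation ideal of $P$ and $P_{\bar 0}^+$ for its even-degree part (degrees $\ge 2$), I expect $\delta_1(\SSS(L))=P^+\oplus xP_{\bar 1}$: every bracket lands there, and the values $\{x,y_i\}=y_i$ and $\{x,xy_i\}=xy_i$ realise both summands. At the next stage both $x$-components are odd, so the $x$-part of the formula drops out and, since $\pi_o(P^+)=P_{\bar 1}$ and $P_{\bar 1}\cdot P_{\bar 1}=P_{\bar 0}^+$, one gets $\delta_2=P_{\bar 0}^+\ne 0$. Finally $\delta_3=0$, because $P_{\bar 0}^+$ has no $x$-component, so $f_1=g_1=0$ and every bracket of two of its elements vanishes. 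Thus $\SSS(L)$ is solvable of length exactly $3$.

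Part~3 is the heart of the matter and the main obstacle: I must show the upper derived series $\tilde\delta_n$ never reaches $0$, and this is exactly where the infinitude of the $y_i$ enters. The key computation is $\{u,xw\}=-uw$ for an odd-degree $y$-monomial $u$ and any $y$-monomial $w$ on a disjoint index set, immediate from Leibniz since $\{u,w\}=0$ and $\{u,x\}=-u$. This shows that bracketing roughly doubles the minimal $y$-degree, so over finitely many generators the truncation $y_i^2=0$ would kill the series; with $i\in\N$, however, fresh indices are always available. The plan is to prove by induction on $n$ that, for every finite $F\subset\N$, the ideal $\tilde\delta_n=\{\tilde\delta_{n-1},\tilde\delta_{n-1}\}\cdot\SSS(L)$ contains a nonzero odd-degree $y$-monomial and an element $xw$ with $w$ a nonzero $y$-monomial, all on indices outside $F$.

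In the inductive step I would first secure disjoint such data by invoking the hypothesis twice with an enlarged forbidden set, obtaining an odd monomial $u$ and an $xw$ on disjoint index sets avoiding $F$; then $uw\in\{\tilde\delta_n,\tilde\delta_n\}\subseteq\tilde\delta_{n+1}$ is a nonzero monomial, and, $\tilde\delta_{n+1}$ being an associative ideal, multiplying by $x$ gives the required element $x(uw)$, while multiplying $uw$ by at most one fresh generator $y_c$ restores an odd-degree monomial. Hence $\tilde\delta_n\ne 0$ for all $n$, so $\SSS(L)$ is not strongly solvable. The delicate points to get right are the disjointness bookkeeping across the two invocations and the parity adjustment that keeps an odd-degree monomial available at every stage; everything else is routine given the bracket formula.
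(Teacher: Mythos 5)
Your proof is correct. Bear in mind that this survey does not actually prove the lemma --- it quotes it from \cite{PeIlana17} --- so the only proofs to compare with are those of the companion Lemmas~\ref{L23} and~\ref{LS2solvA}. For Parts~1 and~2 you follow essentially that route: your closed bracket formula on $\SSS(L)=P\oplus xP$, with $\{x,\cdot\}$ acting as the parity projection $E$, is just the truncated version of the monomial formula $\{x^{\alpha}v,x^{\beta}w\}=x^{\alpha+\beta-1}(\alpha|w|+\beta|v|)vw$ used in the proof of Lemma~\ref{LS2solvA}, and your identification $\delta_1(\SSS(L))=P^+\oplus xP_{\bar 1}$, $\delta_2(\SSS(L))=P^+_{\bar 0}$, $\delta_3(\SSS(L))=0$ is the same parity-of-monomial-types analysis, so the length $3$ comes out identically. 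The genuine added content is Part~3: the survey never writes this argument down (in Lemma~\ref{L23} non-strong-solvability is dismissed with ``one checks'', and for Lemma~\ref{LS2solvA} it is \emph{deduced from} the present lemma, $\SSS(L)$ being a Poisson quotient of $S(L)$), whereas you give a self-contained induction, and it is sound. Indeed, each $\tilde\delta_{n+1}=\{\tilde\delta_n,\tilde\delta_n\}\cdot\SSS(L)$ is an associative ideal because the product is commutative, associative and unital, so multiplying a witness by $x$ or by a fresh generator $y_c$ is legitimate; the key bracket $\{u,xw\}=uw$ (with $u$ an odd monomial, supports disjoint so that $uw\neq 0$ in the truncated algebra) is a one-line Leibniz computation; and invoking the inductive hypothesis a second time with the forbidden set enlarged by the support of the first witness does produce disjoint witnesses, after which the parity adjustment keeps both an $x w$-type element and a nonzero odd monomial available outside any prescribed finite index set. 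This is exactly where the infinitude of $\{y_i\,|\,i\in\N\}$ must enter, consistently with Theorem~\ref{Tsolv}, since for a finite-dimensional solvable $L$ the algebra $\SSS(L)$ would be strongly solvable.
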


\begin{Lemma}[\cite{PeIlana17}]\label{L22b}
Let
$L=\langle x, y_i,z_i \,|\,  [x,y_i]=z_i,\, i\in\N\rangle_K$, $\ch K=2$,
the remaining commutators being trivial.
Then
\begin{enumerate}
\item $\Delta(L)=\Delta_1(L)=\langle y_i, z_i\,|\,  i\in\N\rangle$
      and $L^2=\langle z_i\,|\,  i\in\N\rangle$;
\item $\SSS(L)$ is solvable of length 3;
\item $\SSS(L)$ is not strongly solvable.
\end{enumerate}
\end{Lemma}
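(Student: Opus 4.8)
For Claim~1, I would argue directly in $L$. Since the only nonzero brackets are $[x,y_i]=z_i$, one reads off $L^2=\langle z_i\mid i\in\N\rangle$ immediately. For the delta-sets I would take a general element $w=\alpha x+\sum_i\beta_i y_i+\sum_i\gamma_i z_i$ and compute $[L,w]$: one finds $[x,w]=\sum_i\beta_i z_i$, $[y_j,w]=\alpha z_j$ (here $\ch K=2$ turns $-z_j$ into $z_j$), and $[z_j,w]=0$. Hence if $\alpha\ne 0$ then $[L,w]$ contains every $z_j$ and is infinite dimensional, whereas if $\alpha=0$ then $\dim[L,w]\le 1$. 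This yields $\Delta(L)=\Delta_1(L)=\langle y_i,z_i\mid i\in\N\rangle$. Before attacking Claims~2 and~3 I would record the bracket on $\SSS(L)$. As $\ch K=2$, $\SSS(L)=R_0\oplus xR_0$ with $R_0=K[y_i,z_i]/(y_i^2,z_i^2)$, and the standard formula for the Poisson bracket of a symmetric algebra together with $\{x,y_k\}=z_k$ gives, for $f,g\in\SSS(L)$, the expression $\{f,g\}=(\partial_x f)\,\theta(g)+\theta(f)\,(\partial_x g)$, where $\theta=\sum_k z_k\partial_{y_k}$ is a derivation of $R_0$ sending $y_k\mapsto z_k$ and killing $x$ and all $z_j$. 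The crucial observation is that $\theta^2=0$: in $\theta^2=\sum_{k,l}z_kz_l\partial_{y_k}\partial_{y_l}$ the terms with $k=l$ vanish since $z_k^2=0$, and the terms with $k\ne l$ cancel in pairs because $\ch K=2$. Writing $f=f_0+xf_1$, $g=g_0+xg_1$ with $f_i,g_i\in R_0$, the formula becomes $\{f,g\}=(f_1\theta g_0+g_1\theta f_0)+x\,\theta(f_1g_1)$, splitting each bracket into its $R_0$- and $xR_0$-parts.

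For Claim~2 I would compute the derived series from this formula. One gets $\delta_1(\SSS(L))=I_0\oplus x\theta(R_0)$, where $I_0=\sum_k z_kR_0$ is the ideal of $R_0$ generated by the $z_k$ (note $\theta(y_k)=z_k\in\theta(R_0)$). The heart of the argument is the next step: for $u,v\in\delta_1$ the $xR_0$-component of $\{u,v\}$ is $x\,\theta(u_1v_1)$ with $u_1,v_1\in\theta(R_0)$, and since $\theta(\theta(a)\theta(b))=0$ by $\theta^2=0$, this component vanishes, so $\delta_2\subseteq R_0$. But bracketing two $x$-free elements is zero (the formula needs a factor of $x$ to differentiate), so $\delta_3=\{\delta_2,\delta_2\}=0$. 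Finally $\delta_2\ne 0$: one checks $\{xz_1,z_2y_3\}=z_1z_2z_3\ne 0$ with $xz_1,z_2y_3\in\delta_1$. Hence $\SSS(L)$ is solvable of length exactly $3$.

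For Claim~3 I would track the upper derived series, which takes ideals at each step. Here $\tilde\delta_1=\{\SSS(L),\SSS(L)\}\SSS(L)=J$, the ideal generated by all $z_k$. I would then prove by induction that for any finite set of forbidden indices, $\tilde\delta_n$ contains a monomial $y_a z_{i_1}\cdots z_{i_{2^n-1}}$, a product of one $y$ and $2^n-1$ distinct $z$'s with $a$ outside the $z$-indices, all chosen fresh. Running the hypothesis twice with disjoint fresh index sets yields two such monomials $m=y_az_S$, $m'=y_bz_T$ in $\tilde\delta_n$ with disjoint supports; multiplying $m$ by $x$ inside the ideal $\tilde\delta_n$ and using the bracket formula gives $\{xm,m'\}=y_a z_{S\cup\{b\}\cup T}$, a product of one $y$ and $2^{n+1}-1$ distinct $z$'s lying in $\tilde\delta_{n+1}$. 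Since there are infinitely many $y_i,z_i$, the process never terminates and each $\tilde\delta_n\ne 0$, so $\SSS(L)$ is not strongly solvable.

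The main obstacle is isolating the two structural facts that separate the two notions of solvability. For ordinary solvability everything hinges on $\theta^2=0$ (a genuine $\ch K=2$ and truncation phenomenon), which drives the derived series into the abelian subalgebra $R_0$ after two steps; the delicate point is verifying the component formula for $\{u,v\}$ and that its $x$-part dies. For strong solvability the ideal closure reintroduces the variable $x$ and fresh generators $y_i$ at every step, so the real work is the bookkeeping that guarantees disjoint supports and a steadily growing number of distinct central factors $z_i$; it is precisely the infinitude of generators that makes the upper derived series nonterminating and thereby produces an algebra that is solvable but not strongly solvable.
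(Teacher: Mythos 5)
Your proof is correct; I checked the bracket formula, the key identity $\theta^2=0$, the computation $\delta_2\subseteq R_0$, $\delta_3=0$, the witness $\{xz_1,z_2y_3\}=z_1z_2z_3$, and the induction producing nonzero monomials in every $\tilde\delta_n$ -- all the steps go through. Note, however, that the survey itself does not prove Lemma~\ref{L22b}: it only cites~\cite{PeIlana17}, and the proofs it does display for the companion statements (Lemma~\ref{L23} for $\mathbf{h}_2(K)$, $\mathbf{H}_2(K)$ and Lemma~\ref{LS2solvA} for $S(L)$ with $[x,y_i]=y_i$) proceed by a parity bookkeeping of exponents: monomials are sorted into types $x^{\bar\epsilon}v^{\bar\eta}$ and one records which products of types survive. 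That method is tailored to examples where the bracket of two monomials is again a monomial in the \emph{same} variables; in the present algebra the bracket $\{x,y_i\}=z_i$ creates new (central) variables, so your alternative packaging is well adapted: you split $\SSS(L)=R_0\oplus xR_0$ with $R_0=K[y_i,z_i]/(y_i^2,z_i^2)$ and write $\{f,g\}=(\partial_xf)\,\theta(g)+\theta(f)\,(\partial_xg)$ for the square-zero derivation $\theta=\sum_k z_k\partial_{y_k}$. This buys a structural explanation of solvability of length~$3$ (the $x$-component of $\delta_2$ dies precisely because $\theta^2=0$, a genuine $\ch K=2$ plus truncation effect), and the same formula drives the non-strong-solvability argument, where the ideal closure reinserts $x$ and fresh generators so that $\tilde\delta_{n+1}$ contains a product of one $y$ with $2^{n+1}-1$ distinct $z$'s; the infinitude of generators keeps this going forever. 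The parity approach of the paper is arguably more uniform across its examples, while yours isolates the mechanism ($\theta^2=0$ versus ideal closure) that separates the two notions of solvability.
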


Two examples above are closely related to the following observation.
\begin{Lemma}[\cite{PeIlana17}]\label{L23}
Consider the truncated Hamiltonian Poisson algebra $P=\mathbf{h}_2(K)$
(or the Hamiltonian  Poisson algebra $P=\mathbf{H}_2(K)$),  $\ch K=2$.
Then
\begin{enumerate}
\item $P$ is solvable of length 3.
\item $P$ is not strongly solvable.
\end{enumerate}
\end{Lemma}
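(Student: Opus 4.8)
The plan is to treat both algebras simultaneously by a single parity argument, reducing everything to the identity $\{X,Y\}=1$ together with the characteristic-$2$ fact that squares are central for the Poisson bracket. Write $A=\mathbf{H}_2(K)=K[X,Y]$, respectively $A=\mathbf{h}_2(K)=K[X,Y]/(X^2,Y^2)$. Starting from the explicit formula $\{X^aY^b,X^cY^d\}=(ad-bc)X^{a+c-1}Y^{b+d-1}$, I would first record the two structural facts on which everything rests: (i) every monomial $X^aY^b$ with both $a,b$ even is central in the Lie structure, since the coefficient $ad-bc$ is then even — conceptually this is just $\{f^2,g\}=2f\{f,g\}=0$; and (ii) $\{X,Y\}=1$, so the unit already lies in $\{A,A\}$.

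Next I would introduce the $(\mathbb Z/2)^2$-grading $A=A_{00}\oplus A_{10}\oplus A_{01}\oplus A_{11}$ given by the parities of the exponents, and read off from the same formula that the bracket shifts bidegrees by $(\bar1,\bar1)$, i.e.\ $\{A_u,A_v\}\subseteq A_{u+v+(\bar1,\bar1)}$, and that all ``diagonal'' self-brackets vanish: $\{A_u,A_u\}=0$ for each $u$ (the coefficient $ad-bc$ is even in each of these four cases, with $A_{00}$ central by (i)). Since an $A_{11}$-component of $\{A,A\}$ could only arise from some $\{A_u,A_v\}$ with $u+v=(\bar0,\bar0)$, that is from a diagonal bracket, this already forces $\delta_1(A)=\{A,A\}\subseteq A_{00}\oplus A_{10}\oplus A_{01}$.

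With this containment the ordinary derived series collapses quickly: among the surviving pieces $A_{00}$ is central and $\{A_{10},A_{10}\}=\{A_{01},A_{01}\}=0$, while $\{A_{10},A_{01}\}\subseteq A_{00}$, so $\delta_2(A)\subseteq A_{00}$; then $\delta_3(A)=\{\delta_2(A),\delta_2(A)\}\subseteq\{A_{00},A_{00}\}=0$ by (i). To pin the length at exactly $3$ I note $X=\{X,XY\}$ and $Y=\{Y,XY\}$ both lie in $\delta_1(A)$, whence $1=\{X,Y\}\in\delta_2(A)\neq0$. This gives Claim~1. For the truncated algebra the four graded pieces are simply $\langle1\rangle,\langle X\rangle,\langle Y\rangle,\langle XY\rangle$ (as $X^2=Y^2=0$), and the identical computation yields $\delta_1=\langle1,X,Y\rangle$, $\delta_2=\langle1\rangle$, $\delta_3=0$.

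Claim~2 is then immediate, and it is really the conceptual heart of the statement: because $1\in\{A,A\}$, the very first upper term is $\tilde\delta_1(A)=\{A,A\}\cdot A\supseteq 1\cdot A=A$, so $\tilde\delta_1(A)=A$ and the upper derived series is constantly equal to $A$; hence $P$ is not strongly solvable. I expect the main obstacle to be not Claim~2 but the control of the ordinary derived series in Claim~1 — specifically verifying that $\{A,A\}$ omits the odd-odd component $A_{11}$, which is exactly what the vanishing of the diagonal self-brackets provides and is the one place where $\ch K=2$ is used essentially. The contrast between the two claims is precisely the multiplication by $A$ in the upper series, which resurrects the whole algebra as soon as the unit appears in a commutator; this is the same mechanism as for $\mathrm{M}_2(K)$ in characteristic $2$.
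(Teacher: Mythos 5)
Your proof is correct and follows essentially the same route as the paper: the paper's argument for $\mathbf{H}_2(K)$ is exactly your parity-of-exponents analysis (its notation $X^{\bar 1}Y^{\bar 0}$, $X^{\bar 0}Y^{\bar 1}$, etc.\ is your $(\mathbb{Z}/2)^2$-grading), giving the same three nonvanishing bracket types and the same collapse $\delta_1\subseteq A_{00}\oplus A_{10}\oplus A_{01}$, $\delta_2\subseteq A_{00}$, $\delta_3=0$, with $\mathbf{h}_2(K)$ handled by the corresponding finite-dimensional computation. The only differences are presentational: you treat both algebras uniformly and you spell out Claim~2 (namely $1=\{X,Y\}\in\{A,A\}$ forces $\tilde{\delta}_1(A)=A$, so the upper derived series never decreases), a step the paper leaves as ``one checks.''
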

\begin{proof}
Let $P=\mathbf{h}_2(K)=K[X,Y]/(X^2,Y^2)=\langle 1,x,y,xy\rangle _K$, where $x,y$ denote the images of $X,Y$.
We have
$\delta_1(P)=\{P,P \}=\langle 1,x,y\rangle _K$,
$\delta_2(P)=\langle 1\rangle _K$, and
$\delta_3(P)=0$.
Also, one checks that $P$ is not strongly solvable.

Let $P=\mathbf{H}_2(K)=K[X,Y]$.
The Poisson brackets of monomials $X^nY^m$, $n,m\ge 0$ depend on parities of $n,m$ of multiplicands.
For simplicity, denote by $X^{\bar 0}Y^{\bar 1}$ all monomials $X^\a Y^\b\in K[X,Y]$ such that $\a$ is even and $\b$ odd, etc.
We get non-zero products only in the cases:
\begin{align*}
\{X^{\bar 1}Y^{\bar 0},X^{\bar 0}Y^{\bar 1}\} &= X^{\bar 0}Y^{\bar 0};\\
\{X^{\bar 1}Y^{\bar 1},X^{\bar 1}Y^{\bar 0}\} &= X^{\bar 1}Y^{\bar 0};\\
\{X^{\bar 1}Y^{\bar 1},X^{\bar 0}Y^{\bar 1}\} &= X^{\bar 0}Y^{\bar 1}.
\end{align*}
Thus, $\delta_1(P)$ is spanned by monomials of three types obtained above.
Consider their commutators, the first line yields that $\delta_2(P)$ is spanned by
monomials of type $Y^{\bar 0}Y^{\bar 0}$. Finally, $\delta_3(P)=0$.
\end{proof}
Thus, the Poisson algebras $\mathbf{h}_2(K)$,
$\mathbf{H}_2(K)$ in characteristic 2 behave similarly to the associative algebra $\mathrm{M}_2(K)$ of $2\times 2$ matrices
in characteristic 2.

The question of the solvability of the symmetric algebra  $S(L)$ in case $\ch K=2$
is more complicated as shown below.
The algebras of Lemma~\ref{L22} and Lemma~\ref{L22b},
also yield solvable symmetric algebras which,  of course, are not strongly solvable by that Lemmas.

\begin{Lemma}[\cite{PeIlana17}]\label{LS2solvA}
Let $L=\langle x,y_i\,|\,[x,y_i]=y_i,i\in\N\rangle_K$,
the other commutators being trivial, $\ch K=2$.
Then the symmetric Poisson algebra $S(L)$ is solvable of length 3 but not strongly solvable.
\end{Lemma}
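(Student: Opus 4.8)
The plan is to work with the explicit Poisson structure on $S(L)=K[x,y_1,y_2,\dots]$ and reduce the whole question to a parity bookkeeping on bidegrees. First I would record the bracket. Since $[x,y_i]=y_i$ and $[y_i,y_j]=0$, extending by the Leibniz rule gives, for $f,g\in S(L)$,
$$\{f,g\}=\frac{\partial f}{\partial x}\,E(g)-E(f)\,\frac{\partial g}{\partial x},\qquad E:=\sum_i y_i\,\frac{\partial}{\partial y_i},$$
where $E$ is the Euler (total $y$-degree) operator. Writing a monomial as $u\,x^m$ with $u$ a $y$-monomial of $y$-degree $d$, this specializes to
$$\{u\,x^m,\,u'x^n\}=(md'-nd)\,uu'\,x^{m+n-1},\qquad d'=\deg_y u',$$
so the coefficient is the determinant $md'-nd$ and depends only on the bidegrees $(\deg_x,\deg_y)$, not on the individual monomials.

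Next I would set up the bigrading. Let $S_{(p,q)}$ be the span of $y$-monomials of $y$-degree $q$ times $x^p$. The formula shows the bracket is bigraded, $\{S_{(p,q)},S_{(p',q')}\}\subseteq S_{(p+p'-1,\,q+q')}$, and, because the coefficient is constant on a pair of pieces while monomial multiplication $\mathrm{Sym}^q\otimes \mathrm{Sym}^{q'}\to \mathrm{Sym}^{q+q'}$ is onto, this bracket equals the whole target piece when $pq'-p'q$ is \emph{odd} and is $0$ otherwise (here $\ch K=2$ enters). Consequently every term $\delta_n(S(L))$ of the derived series is an exact sum of full pieces $S_{(p,q)}$, and the derived series is governed by the purely combinatorial rule: which bidegrees arise as $(p+p'-1,q+q')$ from a pair with odd determinant. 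I would then run this rule. The coefficient producing a target bidegree $(i,j)$ reduces to $mj-(i+1)d \pmod 2$, and scanning $m,d$ yields the bidegrees present in $\delta_1$: all $(p,q)$ with $q$ odd, together with those having $q$ even, $q\ge 2$ and $p$ even. Splitting this into parity types and testing which pairs give an odd determinant, only a mixed pair survives — one factor with $p$ even, $q$ odd, the other with $p$ odd, $q$ odd — so the bidegrees of $\delta_2$ are exactly those with $p$ even and $q$ even $\ge 2$. On all such bidegrees the determinant $pq'-p'q$ is even, whence $\delta_3(S(L))=\{\delta_2,\delta_2\}=0$; and $\delta_2\ne 0$ because $y_1=\{x,y_1\}$ and $y_1x=\{x,y_1x\}$ lie in $\delta_1$ while $\{y_1,\,y_1x\}=-y_1^2\ne 0$. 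This gives solvability of length exactly $3$.

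The non-strong-solvability then comes for free. The ideal $(v^2\mid v\in L)$ is a Poisson ideal, since $\{v^2,u\}=2v\{v,u\}=0$, so $\SSS(L)=S(L)/(v^2\mid v\in L)$ is a surjective Poisson image of $S(L)$. Strong solvability passes to quotients: a surjective Poisson homomorphism $\phi\colon R\to\bar R$ satisfies $\phi(\tilde\delta_n(R))=\tilde\delta_n(\bar R)$ by an immediate induction, so $\tilde\delta_s(R)=0$ forces $\tilde\delta_s(\bar R)=0$. Since $\SSS(L)$ is not strongly solvable by Lemma~\ref{L22}, neither is $S(L)$.

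The main obstacle is the middle step: justifying rigorously that each $\delta_n$ is an \emph{exact} union of full bigraded pieces (uniformity of the determinant coefficient across a pair of pieces, plus surjectivity of monomial multiplication), and then carrying out the parity analysis tightly enough to pin down the support of $\delta_2$ precisely. The delicate and decisive point is the collapse, after two bracketings, onto the bidegrees where the determinant is identically even — this is exactly what forces the length to be $3$ and is where $\ch K=2$ is used; once it is established, the remainder is routine bookkeeping.
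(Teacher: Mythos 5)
Your proof is correct and follows essentially the same route as the paper's: your monomial coefficient $md'-nd$ is exactly the paper's $\a|w|+\b|v|$ in characteristic $2$, your parity scan of bidegrees reproduces the paper's classification of monomial types $x^{\bar 0}v^{\bar 1}$, $x^{\bar 1}v^{\bar 1}$, $x^{\bar 0}v^{\bar 0}$ with the same collapse $\delta_1\to\delta_2\to\delta_3=0$, and the non-strong-solvability is likewise inherited from Lemma~\ref{L22} through the Poisson quotient $S(L)\to\SSS(L)$. Your bigraded refinement (pinning down the exact support of each $\delta_n$ rather than just an upper bound by parity types) is slightly sharper than what the paper records, but it does not change the substance of the argument.
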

\begin{proof}
Put $H=\langle y_i\,|\,i\in\N\rangle_K$.
For a monomial $v=y_{i_1}y_{i_2}\cdots y_{i_k}\in S(H)$ we define its length $|v|=k$.
Then $v'=\{x,v\}=|v|v$.
A basis of $S(L)$ is formed by
$x^\a v$, $\a\ge 0$, where $v\in S(H)$ are respective basis monomials.
Consider the products:
\begin{equation} \label{xaaxbb2}
\{ x^\a v,x^\b w\} =x^{\a+\b-1}(\a|w|+ \beta|v|) vw.
\end{equation}
These products depend on the parities of $\a,\b,|v|,|w|$.
For simplicity, denote by $x^{\bar 0}v^{\bar 1}$ all monomials $x^\a v\in S(L)$ such that $\a$ is even and $|v|$ is odd, etc.
The only non-zero products~\eqref{xaaxbb2} are of types:
\begin{align*}
\{x^{\bar 1}v^{\bar 0},x^{\bar 0}v^{\bar 1}\} &= x^{\bar 0}v^{\bar 1};\\
\{x^{\bar 1}v^{\bar 1},x^{\bar 1}v^{\bar 0}\} &= x^{\bar 1}v^{\bar 1};\\
\{x^{\bar 1}v^{\bar 1},x^{\bar 0}v^{\bar 1}\} &= x^{\bar 0}v^{\bar 0}.
\end{align*}
Thus, $\delta_1(S(L))$ is spanned by monomials of the three types obtained above.
Consider their commutators, the last line yields that $\delta_2(S(L))$ is spanned by
monomials of type $x^{\bar 0}v^{\bar 0}$. Finally, $\delta_3(S(L))=0$.
\end{proof}

\begin{Lemma}[\cite{PeIlana17}]\label{LS2solvB}
Let $L=\langle x,y_i,z_i\,|\,[x,y_i]=z_i,i\in\N\rangle_K$, the other commutators being trivial, $\ch K=2$.
Then the symmetric Poisson algebra $S(L)$ is solvable of length 3 but not strongly solvable.
\end{Lemma}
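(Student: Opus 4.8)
The plan is to imitate the successful computation of Lemma~\ref{LS2solvA} almost verbatim, since the Lie algebra here differs only in that the bracket $[x,y_i]$ lands in a \emph{new} central generator $z_i$ rather than back in $y_i$. Put $H=\langle y_i,z_i\mid i\in\N\rangle_K$, which is an abelian ideal of $L$, so that $L=\langle x\rangle\oplus H$ as vector spaces and $S(L)$ has a $K$-basis consisting of monomials $x^\a v$ with $\a\ge 0$ and $v$ a basis monomial in the polynomial algebra $S(H)=K[y_i,z_i]$. First I would record the action of $\{x,-\}$ on $S(H)$: since $\{x,y_i\}=z_i$ and $\{x,z_i\}=0$, the bracket with $x$ acts as the (degree-preserving in total, but $y$-lowering) derivation $D=\sum_i z_i\,\partial/\partial y_i$ of $S(H)$. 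Thus for a basis monomial $v\in S(H)$ one has $\{x,v\}=D(v)$, and more generally, using the Leibnitz rule,
\begin{equation*}
\{x^\a v,\,x^\b w\}=x^{\a+\b-1}\big(\a\,w\,D(v)-\b\,v\,D(w)\big)+x^{\a+\b}\{v,w\},
\end{equation*}
where the last term vanishes because $H$ is abelian. So $\{x^\a v,x^\b w\}=x^{\a+\b-1}(\a\,w\,D(v)+\b\,v\,D(w))$ in characteristic~$2$.

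**Tracking parities.**
Next, exactly as in Lemma~\ref{LS2solvA}, I would exploit that in characteristic $2$ the scalar coefficients $\a,\b$ above only matter modulo $2$, so the relevant invariants of a monomial $x^\a v$ are the parity of $\a$ together with the parity of the total $y$-degree of $v$ (the $z$-degree is irrelevant to whether $D(v)$ survives, but I must check how $D$ changes these parities). The derivation $D$ replaces one $y$ by one $z$, hence it lowers the $y$-degree by $1$, i.e.\ it flips the parity of the $y$-degree. With $x^{\bar\epsilon}v^{\bar\eta}$ denoting a monomial whose $x$-exponent has parity $\epsilon$ and whose $y$-degree has parity $\eta$, the displayed product is nonzero only when the surviving term has an odd coefficient and a nonzero $D$-image; carrying this out gives precisely the same three surviving product types as in Lemma~\ref{LS2solvA}, namely $\{x^{\bar1}v^{\bar0},x^{\bar0}v^{\bar1}\}=x^{\bar0}v^{\bar1}$, $\{x^{\bar1}v^{\bar1},x^{\bar1}v^{\bar0}\}=x^{\bar1}v^{\bar1}$, and $\{x^{\bar1}v^{\bar1},x^{\bar0}v^{\bar1}\}=x^{\bar0}v^{\bar0}$. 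I would then read off that $\delta_1(S(L))$ is spanned by these three parity-types, that bracketing them (the last type with itself, via the first relation) forces $\delta_2(S(L))$ into the single type $x^{\bar0}v^{\bar0}$, and finally that $\delta_3(S(L))=0$, giving solvability of length exactly $3$.

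**The non-strong-solvability and the main obstacle.**
For the last assertion I would invoke Lemma~\ref{L22b}, which already asserts that the truncated algebra $\SSS(L)$ for this same $L$ is not strongly solvable; the symmetric algebra $S(L)$ surjects onto $\SSS(L)$ as a Poisson algebra, and since $\tilde\delta_n$ is defined by Poisson operations it is preserved by Poisson epimorphisms, so $\tilde\delta_n(\SSS(L))\ne0$ forces $\tilde\delta_n(S(L))\ne0$ for the relevant $n$. Alternatively, one argues directly that $\tilde\delta_2(S(L))$ contains a nonzero element: because the upper derived terms multiply by the whole algebra at each step, the parity bookkeeping that killed $\delta_3$ no longer applies, and a product such as $\{x^{\bar1}v^{\bar1},x^{\bar0}v^{\bar1}\}\cdot x^{\bar0}v^{\bar0}$ can be arranged to be nonzero. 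The step I expect to be the genuine obstacle is the parity analysis of $D$: unlike in Lemma~\ref{LS2solvA}, where $\{x,v\}=|v|\,v$ simply scaled the same monomial, here $D$ \emph{moves} the monomial to a different one (trading a $y$ for a $z$), so I must verify carefully that the parity of the $y$-degree is the correct invariant to stratify by and that $D$ genuinely flips it while never producing an accidental cancellation among distinct basis monomials. Once that single bookkeeping point is pinned down, the three product types and the length-$3$ conclusion follow by the same formal manipulation as before.
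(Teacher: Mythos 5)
Your second half is fine: $\SSS(L)$ is a Poisson quotient of $S(L)$, Poisson epimorphisms carry $\tilde\delta_n$ onto $\tilde\delta_n$, so Lemma~\ref{L22b} indeed forces $S(L)$ not to be strongly solvable (this is exactly how the survey deduces it). The solvability half, however, has a genuine gap, and it sits precisely at the point you flagged as the obstacle: the parity of the total $y$-degree is \emph{not} the right invariant, and the bookkeeping does not close. First, a computational slip: the correct bracket is $\{x^\a v,x^\b w\}=x^{\a+\b-1}\bigl(\a\,v\,D(w)-\b\,w\,D(v)\bigr)$; your version, which attaches $\a$ to $w\,D(v)$, would give $\{x,y_1\}=0$. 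More seriously, $D(w)=0$ iff \emph{every individual} $y_i$-exponent of $w$ is even, which is strictly stronger than evenness of the total $y$-degree: $D(y_1y_2)=y_2z_1+y_1z_2\neq0$. Consequently $\{x,\,y_1y_2\}\neq0$ is a nonzero bracket of type $x^{\bar1}v^{\bar0}\times x^{\bar0}v^{\bar0}$, absent from your list; and since $D$ flips the $y$-parity (as you yourself note), the outputs of the three products you do list are also not of the types you wrote -- e.g.\ $\{x^{\bar1}v^{\bar0},x^{\bar0}w^{\bar1}\}$ lands in type $x^{\bar0}v^{\bar0}$, not $x^{\bar0}v^{\bar1}$, so your list cannot be ``precisely the same'' as in Lemma~\ref{LS2solvA}. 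In fact all four parity types already occur in $\delta_1(S(L))$: witness $\{x,y_1\}=z_1$, $\{x,y_1y_2\}$, $\{x,xy_1\}=xz_1$, $\{x,xy_1y_2\}$. Hence no argument based only on these parities can confine $\delta_2$ to a single type and yield $\delta_3=0$; the parity method works in Lemma~\ref{LS2solvA} only because there $\{x,v\}=|v|v$ is a scalar multiple of the \emph{same} monomial, while here $D$ moves monomials.

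A correct route uses a different invariant. In characteristic $2$ one has $\{f,g\}=\partial_x f\cdot D(g)+D(f)\cdot\partial_x g$ on $S(L)=K[x,y_i,z_i]$, where $D=\sum_i z_i\,\partial/\partial y_i$, and the derivations $\partial_x$, $D$ commute and satisfy $\partial_x^2=0$, $D^2=0$ (squares of coordinate derivations vanish mod $2$). These identities give $\partial_x\{f,g\}=D(\partial_x f\cdot\partial_x g)$ and $D\{f,g\}=\partial_x\bigl(D(f)\cdot D(g)\bigr)$. Applying them twice, every spanning bracket $\{u_1,u_2\}$ of $\delta_2(S(L))$, with $u_i=\{f_i,g_i\}$, satisfies $\partial_x\{u_1,u_2\}=D\bigl(D(h_1)D(h_2)\bigr)=0$ and $D\{u_1,u_2\}=\partial_x\bigl(\partial_x(k_1)\partial_x(k_2)\bigr)=0$, where $h_i=\partial_x f_i\cdot\partial_x g_i$ and $k_i=D(f_i)D(g_i)$. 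So every element of $\delta_2(S(L))$ is annihilated by both $\partial_x$ and $D$, whence $\delta_3(S(L))=\{\delta_2,\delta_2\}=0$. Finally $\{\{xy_1,x\},\{xy_1,y_2\}\}=\{xz_1,\,y_1z_2\}=z_1^2z_2\neq0$ shows $\delta_2\neq0$, so the length is exactly $3$. You should replace your parity analysis by an argument of this kind (or prove the needed confinement of $\delta_1$ by tracking the actual elements, not just their parity types); as it stands, the central claim of your computation is false.
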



\end{document}